\newcommand{\bcen}{\begin{center}}     \newcommand{\ecen}{\end{center}}
\newcommand{\bay}{\begin{array}}      \newcommand{\eay}{\end{array}}
\newcommand{\beq}{\begin{eqnarray*}}      \newcommand{\eeq}{\end{eqnarray*}}
\def\ot{\otimes}
\def\ch{\mathrm{char}}
\def\dim{\mathrm{dim}}
\def\dv{\underline{\mathrm{dim}}}
\def\dm{\underline{\underline{\mathrm{dim}}}}
\def\End{\mathrm{End}}
\def\Ext{\mathrm{Ext}}
\def\Hom{\mathrm{Hom}}
\def\id{\mathrm{id}}
\def\inj{\mathrm{inj}}
\def\mod{\mathrm{mod}}
\def\Mod{\mathrm{Mod}}
\def\op{\mathrm{op}}
\def\ot{\otimes}
\def\per{\mathrm{per}}
\def\Per{\mathrm{Per}}
\def\proj{\mathrm{proj}}
\def\rad{\mathrm{rad}}
\def\rank{\mathrm{rank}}
\def\RHom{\mathrm{RHom}}
\def\tr{\mathrm{tr}}
\def\Tr{\mathrm{Tr}}
\def\tv{\underline{\mathrm{tr}}}
\def\tm{\underline{\underline{\mathrm{tr}}}}
\def\Tor{\mathrm{Tor}}
\def\Z{\mathbb{Z}}
\begin{document}

\newtheorem{theorem}{Theorem}
\newtheorem{proposition}{Proposition}
\newtheorem{lemma}{Lemma}
\newtheorem{corollary}{Corollary}
\newtheorem{remark}{Remark}
\newtheorem{example}{Example}
\newtheorem{definition}{Definition}
\newtheorem*{conjecture}{Conjecture}
\newtheorem{question}{Question}

\title{Hirzebruch-Riemann-Roch and Lefschetz type formulas for finite dimensional algebras}

\author{Yang Han}

\date{\footnotesize KLMM, Academy of Mathematics and Systems Science,
Chinese Academy of Sciences, \\ Beijing 100190, China. \\ School of Mathematical Sciences, University of
Chinese Academy of Sciences, \\ Beijing 100049, China.\\ E-mail: hany@iss.ac.cn}

\maketitle

\begin{abstract}
The Hirzebuch-Riemann-Roch (HRR) and Lefschetz type formulas for finite dimensional elementary algebras of finite global dimension are explicitly given.
They have cohomological, homological, Hochschild cohomological and Hochschild homological four versions,
and module, bimodule, module complex and bimodule complex four levels.
For this, the dimension matrix of a bimodule (complex) and the trace matrix of a bimodule (complex) endomorphism are introduced.
It is shown that Shklyarov pairing, Chern character and Hattori-Stallings trace can be concretely expressed by Cartan matrix, dimension vector and trace vector in this situation. Furthermore, the HRR and Lefschetz type formulas for finite dimensional elementary algebras of finite global dimension and dg algebras are compared.
\end{abstract}

\medskip

{\footnotesize {\bf Mathematics Subject Classification (2010)}:
16G10, 16E30, 16E40, 16E45, 18G35}

\medskip

{\footnotesize {\bf Keywords} :  Hochschild (co)homology, Cartan matrix, Ringel form, dimension matrix, trace matrix, Shklyarov pairing, Chern character, Hattori-Stallings trace.}

%\tableofcontents

\section{Introduction}

Throughout this paper, $k$ is a fixed field,
and all algebras are assumed to be associative $k$-algebras with identity.

Recently various Hirzebruch-Riemann-Roch (HRR) and Lefschetz type formulas have been formulated
in derived commutative or noncommutative algebraic geometry \cite{Cal05,CalWil10,CisTab14,Lun12,Mar09,Pet13,Pol14,PolVai12,Ram09,Shk13}.
The main ingredients in HRR type formula (or Cardy condition \cite{CalWil10}) and Lefschetz type formula (or generalized HRR type formula \cite{Ram09}, baggy Cardy condition \cite{CalWil10}) are Chern character (or Euler character \cite{Shk13,Lun12}, Chern class, Euler class), Hattori-Stallings trace (or twisted Chern character \cite{Ram09}, boundary-bulk map \cite{PolVai12}, Hochschild class \cite{Pet13}), and Shklyarov pairing (or pairing \cite{Shk13,Pet13}, Mukai pairing \cite{CalWil10}).
Usually it is difficult to calculate explicitly these ingredients and provide concrete HRR and Lefschetz type formulas, just as what mentioned in \cite{Shk13,PolVai12,Pol14}.

In this paper, we study the HRR and Lefschetz type formulas for finite dimensional elementary algebras of finite global dimension.
Recall that a finite dimensional algebra $A$ is {\it elementary} if the factor algebra $A/\rad A$ of $A$ modulo its Jacobson radical $\rad A$ is isomorphic to the direct product of finitely many copies of $k$, or equivalently, $A\cong kQ/I$ where $Q$ is a finite quiver and $I$ is an admissible ideal of the path algebra $kQ$ (Ref. \cite[Chapter III, Theorem 1.9]{AusReiSma95}).
For a finite dimensional elementary algebra of finite global dimension, we can work out all ingredients in the HRR and Lefschetz type formulas.
Indeed, its Hochschild homology is concentrated in degree zero and of $k$-dimension the rank of its Grothendieck group (Proposition~\ref{Proposition-HH-GloDimFinAlg}),
the matrix of its Shklyarov pairing under the canonical bases is the transpose of its Cartan matrix (Proposition~\ref{Proposition-Pairing-CartanMatrix}),
Chern characters can be concretely expressed by dimension vectors and the Cartan matrix of the algebra (Proposition~\ref{Proposition-ChernChar-DimVect}),
and Hattori-Stallings traces can be concretely expressed by trace vectors and the Cartan matrix of the algebra (Proposition~\ref{Proposition-HochClass-TrCartan}).
Far beyond these, we will give explicitly the HRR and Lefschetz type formulas for finite dimensional elementary algebras of finite global dimension,
which have cohomological, homological, Hochschild cohomological and Hochschild homological four versions,
and module, bimodule, module complex and bimodule complex four levels (Theorem 1--4,6--9).
All these HRR (resp. Lefschetz) type formulas are essentially equivalent.
Unlike those in other literatures, our HRR type formulas are identities in matrix additive groups over $\mathbb{Z}$ but not the base field $k$.
The cohomological HRR type formula on module level is just Ringel's formula in
\cite[Lemma 2.4]{Rin84}.
The Hochschild cohomological HRR type formula on module level generalizes Happel's formula in \cite[Theorem 2.2]{Hap97} from Hochschild cohomology to Hochschild cohomology with coefficients.
The Hochschild homological HRR type formula on module level generalizes Zhang-Liu's formula in \cite[Theorem]{ZhaLiu97} from Hochschild homology to Hochschild homology with coefficients.
The HRR type formulas for dg algebras were given by Shklyarov in
\cite[Theorem 1.2, Theorem 1.3 and Proposition 4.4]{Shk13}.
They are identities in the base field $k$.
In the case of $\ch k=0$, for a very important class of finite dimensional elementary algebras of finite global dimension --- triangular algebras ($=$ directed algebras in \cite{Shk13} $\neq$ directed algebras in \cite{Rin84}), Shklyarov deduced Ringel's formula from his formula \cite[5.1]{Shk13}. Recall that a finite dimensional elementary algebra is {\it triangular} if it is isomorphic to a bound quiver algebra $kQ/I$ where the quiver $Q$ is acyclic.
We will show that, in the case of $\ch k=0$, for all finite dimensional elementary algebras of finite global dimension,
Shklyarov's formula in \cite[Theorem 1.2 and Theorem 1.3]{Shk13} is just the cohomological HRR type formula on complex level in Theorem~\ref{Theorem-HRR-Complex} (1)
which is equivalent to Ringel's formula,
and Shklyarov's formula in \cite[Proposition 4.4]{Shk13} is just the homological HRR type formula on complex level in Theorem~\ref{Theorem-HRR-Complex} (2) (See \S \ref{Subsection-HRR-dga}).
The Lefschetz type formulas for dg algebras were given by Petit in \cite[Proposition 5.5 and Theorem 5.6]{Pet13}.
We will prove that, the homological Lefschetz type formula on complex level in Theorem~\ref{Theorem-Lefschetz-Complex} (2)
is just Petit's formula in \cite[Theorem 5.6]{Pet13} restricted to finite dimensional elementary algebras of finite global dimension,
and the Hochschild homological Lefschetz type formula on complex level in Theorem~\ref{Theorem-Lefschetz-Complex} (4)
is just Petit's formula in \cite[Proposition 5.5]{Pet13} restricted to finite dimensional elementary algebras of finite global dimension
(See \S \ref{Subsection-Lefschetz-dga}).

The paper is organized as follows:
In section 2, we will introduce the dimension matrix of a bimodule (complex). Then we will provide various versions of HRR type formulas on various levels for finite dimensional elementary algebras of finite global dimension.
Moreover, we will give the matrix of Shklyarov pairing under canonical bases,
and express Chern characters by dimension vectors and the Cartan matrix of the algebra.
Furthermore, we compare the HRR type formulas for finite dimensional elementary algebras of finite global dimension with the HRR type formulas for dg algebras, i.e., Shklyarov's formulas. In section 3, we will introduce the trace matrix of a bimodule (complex) endomorphism. Then we will give various versions of Lefschetz type formulas on various levels for finite dimensional elementary algebras of finite global dimension.
Moreover, we will express Hattori-Stallings traces by trace vectors and the Cartan matrix of the algebra.
Furthermore, we compare the Lefschetz type formulas for finite dimensional elementary algebras of finite global dimension with the Lefschetz type formulas for dg algebras, i.e., Petit's formulas.

We refer to \cite{AusReiSma95} for representation theory of finite dimensional algebras,
to \cite{Kel94,Kel06} for dg algebras and dg categories, and to \cite{Wei94} for homological algebra.
By convention, a complex $X$ is both a cochain complex $(X^l,d^l)_{l\in\Z}$ and a chain complex $(X_l,d_l)_{l\in\Z}$ but the homogeneous component $X^l=X_{-l}$ and the differential $d^l=d_{-l}$ for all $l\in\Z$.
We denote by $R^{n\times m}$ the set of all $n\times m$ matrices with entries in a ring $R$.
Moreover, $\dim:=\dim_k$, $\ot:=\ot_k$ and $(-)^*:=\Hom_k(-,k)$.

\section{HRR type formulas for finite dimensional algebras}

In this section, we will give the HRR type formulas for finite dimensional elementary algebras of finite global dimension and compare them with the HRR type formulas for dg algebras.

\subsection{Dimension matrices}

In order to formulate the HRR type formulas for finite dimensional elementary algebras of finite global dimension, we introduce the dimension matrix of a bimodule (complex) which generalizes both the Cartan matrix of an algebra and the dimension vector of a module.

\bigskip

\noindent{\bf Cartan matrices of algebras.} Let $A$ be a finite dimensional elementary algebra and
$\{e_1,\cdots,e_n\}$ a complete set of orthogonal primitive idempotents of $A$.
Then $\{e_1A,\cdots,e_nA\}$ is a complete set of representatives of isomorphism classes of indecomposable projective right $A$-modules.
The {\it Cartan matrix} $C_A$ of the algebra $A$ is the integer-valued $n\times n$ matrix $(c_{ij})$ with entries
$c_{ij}:=\dim\Hom_A(e_iA,e_jA)=\dim e_jAe_i, 1\le i,j\le n$.
If $A$ is a finite dimensional elementary algebra of finite global dimension
then $\det C_A=\pm 1$, and thus $C_A$ is invertible and its inverse matrix $C_A^{-1}$ is also an integer-valued $n\times n$ matrix.
In this situation, the {\it Ringel form} of the algebra $A$ is
$\langle -,- \rangle_A : \mathbb{Z}^n\times\mathbb{Z}^n\to\mathbb{Z}, (x,y)\mapsto x^T C_A^{-T}y$,
where the elements in $\Z^n$ are column vectors and $T$ denotes transpose.
The {\it Coxeter matrix} of the algebra $A$ is $\Phi_A:=-C_A^{-T}\cdot C_A$.
All these concepts play quite important roles in representation theory of finite dimensional algebras \cite{AusReiSma95,Rin84}.

The following lemma gives the Cartan matrices of tensor product algebra, opposite algebra and enveloping algebra, and generalizes \cite[Lemma 2.1 (ii)]{Hap97}.

\begin{lemma} \label{Lemma-CartanMatrix-AlgebraTensorOp}
Let $A$ and $B$ be two finite dimensional elementary algebras,
and $\{e_1,\cdots,e_n\}$ and $\{f_1,\cdots,f_m\}$ be complete sets of orthogonal primitive idempotents of $A$ and $B$ respectively. Then the following three identities hold:

{\rm (1)} $C_{B\ot A}=C_B\ot C_A$.

{\rm (2)} $C_{A^\op}=C_A^T$, where $A^\op$ is the opposite algebra of $A$.

{\rm (3)} $C_{A^e}=C_A^T \ot C_A$, where $A^e:=A^\op\ot A$ is the enveloping algebra of $A$.
\end{lemma}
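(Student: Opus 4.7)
The plan is to prove (1) directly from the definition of the Cartan matrix, then deduce (2) by a quick book-keeping of the opposite multiplication, and finally combine these to get (3) via the identity $A^e = A^{\op}\ot A$.

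For (1), I would first show that
$\{f_j\ot e_i \mid 1\le j\le m,\ 1\le i\le n\}$
is a complete set of orthogonal primitive idempotents of $B\ot A$. Completeness and orthogonality follow from $\sum_{j,i}f_j\ot e_i=(\sum_j f_j)\ot(\sum_i e_i)=1\ot 1$ and $(f_j\ot e_i)(f_{j'}\ot e_{i'})=\dz_{jj'}\dz_{ii'}(f_j\ot e_i)$. The one subtle point is primitivity; this is where the elementary hypothesis is used. Since $A$ and $B$ are elementary, each corner ring $e_iAe_i$ and $f_jBf_j$ is local with residue field $k$, hence
$(f_j\ot e_i)(B\ot A)(f_j\ot e_i)=f_jBf_j\ot e_iAe_i$
has radical $\rad(f_jBf_j)\ot e_iAe_i+f_jBf_j\ot\rad(e_iAe_i)$ with quotient $k\ot k=k$; so the corner is local and $f_j\ot e_i$ is primitive. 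Then
$\dim(f_{j'}\ot e_{i'})(B\ot A)(f_j\ot e_i)=\dim(f_{j'}Bf_j)\cdot\dim(e_{i'}Ae_i)=(C_B)_{jj'}(C_A)_{ii'},$
and with the usual lexicographic indexing of Kronecker products, this is precisely the $((j,i),(j',i'))$-entry of $C_B\ot C_A$.

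For (2), the same set $\{e_1,\dots,e_n\}$ is a complete set of orthogonal primitive idempotents for $A^\op$, because idempotency, orthogonality, primitivity and completeness are each preserved under passage to the opposite algebra. The $(i,j)$-entry of $C_{A^\op}$ is $\dim(e_j\cdot_{\op}A\cdot_{\op}e_i)=\dim(e_iAe_j)=(C_A)_{ji}$, so $C_{A^\op}=C_A^T$. Finally, (3) is then immediate: by (1) and (2),
$C_{A^e}=C_{A^\op\ot A}=C_{A^\op}\ot C_A=C_A^T\ot C_A.$

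The only genuine obstacle is the primitivity step in (1); the rest is book-keeping. Strictly speaking one must also be careful that the chosen lexicographic indexing of $\{f_j\ot e_i\}$ matches the Kronecker-product convention used to form $C_B\ot C_A$, so I would make this convention explicit at the outset to avoid a transpose ambiguity.
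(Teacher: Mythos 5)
Your proposal is correct and follows essentially the same route as the paper: compute $\dim (f_{j'}\ot e_{i'})(B\ot A)(f_j\ot e_i)=\dim f_{j'}Bf_j\cdot\dim e_{i'}Ae_i$ entrywise for (1), note $\dim e_jA^{\op}e_i=\dim e_iAe_j$ for (2), and combine them for (3). The only difference is that you explicitly verify primitivity of the idempotents $f_j\ot e_i$ via the locality of the corner $f_jBf_j\ot e_iAe_i$ (using the elementary hypothesis), a point the paper simply takes for granted by calling this the canonical complete set of orthogonal primitive idempotents of $B\ot A$.
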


\begin{proof}
(1) With respect to the canonical complete set $\{f_1\ot e_1,\cdots,f_1\ot e_n,\cdots, \linebreak f_m\ot e_1,\cdots,f_m\ot e_n\}$
of orthogonal primitive idempotents of $B\ot A$, we have  \linebreak
$(C_{B\ot A})_{n(j-1)+i,n(j'-1)+i'} = \dim (f_{j'}\ot e_{i'})(B\ot A)(f_j\ot e_i)
= \dim (f_{j'}Bf_j \ot e_{i'}Ae_i) = \dim f_{j'}Bf_j \cdot \dim e_{i'}Ae_i
= (C_B)_{jj'}\cdot (C_A)_{ii'} = (C_B\ot C_A)_{n(j-1)+i,n(j'-1)+i'}$ for all $1\le j,j'\le m$ and $1\le i,i'\le n$.
So $C_{B\ot A}=C_B\ot C_A$.

(2) With respect to the canonical complete set $\{e_1,\cdots,e_n\}$ of orthogonal primitive idempotents of $A^\op$,
we have $(C_{A^\op})_{ij}=\dim e_jA^\op e_i=\dim e_i A e_j=(C_A)_{ji}$ for all $1\le i,j\le n$.
So $C_{A^\op}=C_A^T$.

(3) follows immediately from (1) and (2).
\end{proof}

\noindent{\bf Dimension vectors of modules revised.}
Let $A$ be a finite dimensional elementary algebra and $\{e_1,\cdots,e_n\}$ a complete set of orthogonal primitive idempotents of $A$.
The {\it dimension vector} of a finite dimensional right $A$-module $M$ is the column vector $\dv M:=[\dim Me_1,\cdots,\dim Me_n]\in\Z^n$.
The {\it dimension vector} of a finite dimensional left $A$-module $N$ is the row vector $\dv N:=(\dim e_1N,\cdots,\dim e_nN)\in\Z^n$.
Any finite dimensional right $A$-module $M$ can be viewed as a left $A^\op$-module naturally,
but $\dv M_A =(\dv\ _{A^\op}M)^T$.

\begin{remark}{\rm
In representation theory of finite dimensional algebras, the dimension vectors of both left modules and right modules are row vectors.
Here we change this convention so that they are compatible with the definition of dimension matrix of a bimodule below.
}\end{remark}

\noindent{\bf Dimension matrices of bimodules.}
Let $A$ and $B$ be finite dimensional elementary algebras,
and $\{e_1,\cdots,e_n\}$ and $\{f_1,\cdots,f_m\}$ complete sets of orthogonal primitive idempotents of $A$ and $B$ respectively.
The {\it dimension matrix} or {\it Cartan matrix} of a finite dimensional $B$-$A$-bimodule $M$
is the integer-valued $n\times m$ matrix $\dm M = C_M:=(c_{ij})$
where $c_{ij}:= \dim\Hom_A(e_iA,f_jM)=\dim f_jMe_i$ for all $1\le i\le n$ and $1\le j\le m$.

\begin{remark} \label{Remark-CartanMatrix-LeftRightMod} {\rm
(1) The Cartan matrix $C_A$ of a finite dimensional elementary algebra $A$ is just the dimension matrix $\dm {_AA_A}$ or Cartan matrix $C_{_AA_A}$ of the $A$-bimodule $A$.

(2) A finite dimensional right $A$-module $M$ can be viewed as a $k$-$A$-bimodule naturally,
and $\dv M_A = \dm\ {_kM_A}$.
A finite dimensional left $A$-module $N$ can be viewed as an $A$-$k$-bimodule naturally,
and $\dv {_AN} = \dm\ {_AN_k}$.
In particular, for a finite dimensional $k$-vector space $M$, $\dim M=\dm\ {_kM_k}$.
So the dimension matrix of a finite dimensional bimodule generalizes the dimension of a finite dimensional vector space and the dimension vector of a finite dimensional module.
}\end{remark}

The following lemma distinguishes $\dm{_BM_A}$, $\dv {M_{B^\op\ot A}}$ and $\dv{_{B\ot A^\op}M}$, and generalizes \cite[Lemma 2.1 (i)]{Hap97}.

\begin{lemma} \label{Lemma-DimVector-Bimod}
Let $A$ and $B$ be finite dimensional elementary algebras,
$\{e_1,\cdots,e_n\}$ and $\{f_1,\cdots,f_m\}$ complete sets of orthogonal primitive idempotents of $A$ and $B$ respectively,
and $M$ a finite dimensional $B$-$A$-bimodule. Then

{\rm (1)} $\dv{M_{B^\op\ot A}} = [\dv f_1M,\cdots,\dv f_mM]$
where $\dv f_jM$ is just the $j$-th column of the dimension matrix $\dm M$ of the $B$-$A$-bimodule $M$ for all $1\le j\le m$, i.e.,
$\dv{M_{B^\op\ot A}}$ is the column vectorization of $\dm M$.

{\rm (2)}  $\dv{_{B\ot A^\op}M} = (\dv M_{B^\op\ot A})^T$ is the row vectorization of $(\dm M)^T$.
\end{lemma}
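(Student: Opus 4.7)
The proof is essentially a careful bookkeeping exercise: we just need to identify the idempotent actions on $M$ when it is viewed as a one-sided module over the relevant tensor algebra, and track orderings.

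The plan is to start by fixing a lexicographic ordering of the canonical complete set $\{f_j\ot e_i\mid 1\le j\le m,\,1\le i\le n\}$ of orthogonal primitive idempotents of $B^\op\ot A$ (and equally of $B\ot A^\op$), say $f_1\ot e_1,\ldots,f_1\ot e_n,\,f_2\ot e_1,\ldots,f_2\ot e_n,\ldots,f_m\ot e_n$, so the index $n(j-1)+i$ corresponds to the pair $(j,i)$. This matches the ordering used in the proof of Lemma~\ref{Lemma-CartanMatrix-AlgebraTensorOp}(1).

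For part (1), I would view $M$ as a right $B^\op\ot A$-module via $m\cdot(b^\op\ot a)=bma$. Then $M(f_j\ot e_i)=f_jMe_i$, so the entry of $\dv{M_{B^\op\ot A}}$ at position $n(j-1)+i$ equals $\dim f_jMe_i=(\dm M)_{ij}$. On the other hand, $f_jM$ is a finite dimensional right $A$-module, whose dimension vector (in the revised convention) is the column vector whose $i$-th entry is $\dim f_jMe_i=(\dm M)_{ij}$, i.e.\ precisely the $j$-th column of $\dm M$. Stacking these columns of $\dm M$ for $j=1,\ldots,m$ in order is exactly the column vectorization of $\dm M$ and agrees entry by entry with $\dv{M_{B^\op\ot A}}$.

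For part (2), I would view $M$ as a left $B\ot A^\op$-module via $(b\ot a^\op)\cdot m=bma$. Then $(f_j\ot e_i)M=f_jMe_i$, and under the revised convention $\dv{_{B\ot A^\op}M}$ is the row vector whose entry at position $n(j-1)+i$ is $\dim f_jMe_i=(\dm M)_{ij}$. These are exactly the entries obtained by stringing together the rows of $(\dm M)^T$, i.e.\ the row vectorization of $(\dm M)^T$; equivalently, this row vector is the transpose of the column vector computed in (1). The only real thing to be careful about is that the two orderings of the idempotent set in $B^\op\ot A$ and in $B\ot A^\op$ are taken to be the same, so the entry at position $n(j-1)+i$ corresponds to the same pair $(j,i)$ in both cases; once this is fixed, part (2) is immediate from part (1).

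The main obstacle is not mathematical content but the careful handling of the row/column vector conventions for left and right modules together with the Kronecker-style stacking convention for vectorization; the actual identifications $M(f_j\ot e_i)=(f_j\ot e_i)M=f_jMe_i$ are immediate from the bimodule structure.
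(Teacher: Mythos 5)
Your proof is correct and follows the same route as the paper: fix the lexicographic ordering of the canonical idempotents $f_j\ot e_i$ of $B^\op\ot A$, identify $M(f_j\ot e_i)=f_jMe_i$ to get the entry-by-entry match with the columns of $\dm M$ in part (1), and obtain part (2) directly from part (1). The extra care you take about the ordering and the left-module action in (2) is exactly what the paper leaves implicit in its "(2) follows from (1)".
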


\begin{proof}
(1) With respect to the canonical complete set $\{f_1\ot e_1,\cdots,f_1\ot e_n,\cdots,\linebreak f_m\ot e_1,\cdots,f_m\ot e_n\}$
of orthogonal primitive idempotent of $B^\op\ot A$, we have
$$\begin{array}{ll}
  & \dv{M_{B^\op\ot A}} \\ [2mm]
= & [\dim\ M(f_1\ot e_1),\cdots,\dim\ M(f_1\ot e_n),\cdots,\dim\ M(f_m\ot e_1),\cdots,\dim\ M(f_m\ot e_n)] \\ [2mm]
= & [\dim f_1Me_1,\cdots,\dim f_1Me_n,\cdots,\dim f_mMe_1,\cdots,\dim f_mMe_n] \\ [2mm]
= & [\dv f_1M,\cdots,\dv f_mM].
\end{array}$$

(2) follows from (1).
\end{proof}

We know that the dimension of tensor product of two finite dimensional vector spaces is equal to the product of their dimensions.
More general, we have the following lemma:

\begin{lemma} \label{Lemma-CartanMatrix-ModuleTensor}
Let $A$ and $B$ be finite dimensional elementary algebras,
$\{e_1,\cdots,e_n\}$ and $\{f_1,\cdots,f_m\}$ complete sets of orthogonal primitive idempotents of $A$ and $B$ respectively,
$M$ a finite dimensional right $A$-module, and $N$ a finite dimensional left $B$-module.
Then $\dm(N\ot M) = \dv M\cdot \dv N$.
\end{lemma}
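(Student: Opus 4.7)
The plan is to prove the identity entry-wise by computing both sides in terms of the primitive idempotents. The key observation is that the $B$-$A$-bimodule structure on $N\ot M$ is componentwise: $B$ acts on the first tensor factor, $A$ on the second. So for any primitive idempotents $f_j\in B$ and $e_i\in A$, one has a canonical $k$-linear isomorphism
\[
f_j(N\ot M)e_i \;\cong\; (f_jN)\ot(Me_i).
\]

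From this isomorphism, the $(i,j)$-entry of the left-hand side $\dm(N\ot M)$ is
\[
\dim f_j(N\ot M)e_i \;=\; \dim(f_jN)\cdot\dim(Me_i),
\]
using that the dimension of a tensor product of finite dimensional vector spaces is the product of the dimensions. On the right-hand side, $\dv M$ is by convention the $n\times 1$ column vector with $i$-th entry $\dim Me_i$, while $\dv N$ is the $1\times m$ row vector with $j$-th entry $\dim f_jN$. Their product $\dv M\cdot\dv N$ is therefore the $n\times m$ matrix whose $(i,j)$-entry equals $\dim(Me_i)\cdot\dim(f_jN)$, matching the left-hand side.

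This is a direct verification; I do not anticipate a real obstacle. The only thing one has to be careful with is the placement of factors and the row/column convention adopted in the preceding remarks, so that the outer product $\dv M\cdot\dv N$ produces an $n\times m$ matrix consistent with the indexing of $\dm(N\ot M)$ (rows indexed by $A$-idempotents, columns by $B$-idempotents). Once the conventions are matched up as above, the lemma follows immediately.
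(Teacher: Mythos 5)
Your proof is correct and is essentially the same as the paper's: both compute the $(i,j)$-entry of $\dm(N\ot M)$ via $\dim f_j(N\ot M)e_i = \dim(f_jN\ot Me_i) = \dim Me_i\cdot\dim f_jN$ and match it with $(\dv M\cdot\dv N)_{ij}$ using the column/row conventions. The only difference is that you spell out the bimodule identification $f_j(N\ot M)e_i\cong f_jN\ot Me_i$, which the paper treats as immediate.
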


\begin{proof}
We have $(\dm(N\ot M))_{ij} = \dim(f_jN\ot Me_i) = \dim Me_i \cdot \dim f_jN = (\dv M)_i\cdot (\dv N)_j =  (\dv M\cdot \dv N)_{ij}$
for all $1\le i\le n$ and $1\le j\le m$. So $\dm(N\ot M)= \dv M\cdot \dv N$.
\end{proof}

We know that a finite dimensional vector space and its dual space have the same dimension. More general, we have the following lemma:

\begin{lemma} \label{Lemma-CartanMatrix-ModuleDual}
Let $A$ and $B$ be finite dimensional elementary algebras,
$\{e_1,\cdots,e_n\}$ and $\{f_1,\cdots,f_m\}$ complete sets of orthogonal primitive idempotents of $A$ and $B$ respectively,
$M$ a finite dimensional $B$-$A$-bimodule, and $M^*:=\Hom_k(M,k)$ the dual $A$-$B$-bimodule of $M$.
Then $\dm M^* = (\dm M)^T$. In particular, if $M$ is a finite dimensional left or right $A$-module then
$\dv M^* = (\dv M)^T$.
\end{lemma}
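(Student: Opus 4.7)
The plan is to compute the entries of $\dm M^*$ directly from the definition and show that each entry matches the corresponding transposed entry of $\dm M$. Since $M$ is a $B$-$A$-bimodule, $M^*=\Hom_k(M,k)$ carries the natural $A$-$B$-bimodule structure given by $(a\cdot f\cdot b)(m)=f(bma)$ for $a\in A$, $b\in B$, $f\in M^*$, $m\in M$. With the idempotent sets $\{e_1,\ldots,e_n\}$ of $A$ and $\{f_1,\ldots,f_m\}$ of $B$ fixed, the dimension matrix of $M^*$ is the $m\times n$ matrix with $(j,i)$-entry $\dim e_i M^* f_j$.

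The key step is to identify the subspace $e_i M^* f_j$ of $M^*$ with the dual of $f_j M e_i$. Under the bimodule action just described, for $f\in M^*$ and any $m\in M$ we have $(e_i\cdot f\cdot f_j)(m)=f(f_j m e_i)$. Using the vector-space decomposition $M=\bigoplus_{i',j'} f_{j'}Me_{i'}$ coming from the two complete sets of orthogonal idempotents, it follows that $e_i\cdot f\cdot f_j$ annihilates every summand $f_{j'}Me_{i'}$ with $(i',j')\neq (i,j)$, and its restriction to $f_jMe_i$ can be arbitrary as $f$ ranges over $M^*$. Hence restriction to $f_jMe_i$ gives a $k$-linear isomorphism $e_iM^*f_j\stackrel{\sim}{\to}(f_jMe_i)^*$.

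Since a finite dimensional $k$-vector space has the same dimension as its dual, this isomorphism yields
\[ (\dm M^*)_{ji}=\dim e_iM^*f_j=\dim (f_jMe_i)^*=\dim f_jMe_i=(\dm M)_{ij}, \]
for all $1\le i\le n$ and $1\le j\le m$, which is exactly $\dm M^*=(\dm M)^T$. The one-sided statement for a left or right $A$-module is then the special case $B=k$ (resp. $A=k$) combined with Remark~\ref{Remark-CartanMatrix-LeftRightMod}(2).

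No step here looks hard; the only mild subtlety is keeping track of the side conventions so that the $A$-$B$-bimodule structure on $M^*$ produces the transpose in the correct sense, i.e., that picking out the $(e_i,f_j)$-component of $M^*$ is dual to picking out the $(f_j,e_i)$-component of $M$. Once the identification $e_iM^*f_j\cong (f_jMe_i)^*$ is established, the rest is a direct comparison of entries.
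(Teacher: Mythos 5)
Your proof is correct and follows essentially the same route as the paper: both compute the entry $(\dm M^*)_{ji}=\dim e_iM^*f_j$ and identify it with $\dim(f_jMe_i)^*=\dim f_jMe_i=(\dm M)_{ij}$. You merely supply the justification for the identification $e_iM^*f_j\cong(f_jMe_i)^*$ (via the Peirce decomposition of $M$), which the paper leaves implicit.
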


\begin{proof}
We have $(\dm M^*)_{ji} = \dim e_iM^*f_j = \dim (f_jMe_i)^* = \dim f_jMe_i = (\dm M)_{ij}$
for all $1\le i\le n$ and $1\le j\le m$. So $\dm M^* = (\dm M)^T$.
\end{proof}

\bigskip

\noindent{\bf Dimension matrices of complexes.}
The {\it (super) dimension} of a bounded complex $M$ of finite dimensional $k$-vector spaces is
the integer $\dim M := \linebreak \sum\limits_{l\in\Z}(-1)^l\ \dim M^l$.
Let $A$ be a finite dimensional elementary algebra,
and $\{e_1,\cdots,\linebreak e_n\}$ a complete set of orthogonal primitive idempotents of $A$.
The {\it (super) dimension vector} of a bounded complex $M$ of finite dimensional right (resp. left) $A$-modules is the column (resp. row) vector $\dv M := \sum\limits_{l\in\Z}(-1)^l\ \dv M^l\in\Z^n$.
Let $B$ be also a finite dimensional elementary algebra,
and $\{f_1,\cdots, f_m\}$ a complete set of orthogonal primitive idempotents of $B$.
The {\it (super) dimension matrix} or {\it (super) Cartan matrix} of a bounded complex $M$ of finite dimensional $B$-$A$-bimodules is the integer-valued $n\times m$ matrix $\dm M = C_M:= \sum\limits_{l\in\Z}(-1)^l\ \dm M^l$.

The following lemma implies that dimension matrix is an additive invariant on the category $B\mbox{\rm -mod-}A$ of finite dimensional $B$-$A$-bimodules.

\begin{lemma} \label{Lemma-DimMatrix-qis}
Let $A$ and $B$ be two finite dimensional elementary algebras,
and $\{e_1,\cdots,e_n\}$ and $\{f_1,\cdots,f_m\}$ complete sets of orthogonal primitive idempotents of $A$ and $B$ respectively.
Then the following two statements hold:

{\rm (1)} For any short exact sequence $0\to M'\to M\to M''\to 0$ in the category $B\mbox{\rm -mod-}A$ of finite dimensional $B$-$A$-bimodules,
$\dm M=\dm M'+\dm M''$.
So $\dm : K_0(B\mbox{\rm -mod-}A) \to \Z^{n\times m}, [M] \mapsto \dm M$, is a group homomorphism.
Here, $K_0(B\mbox{\rm -mod-}A)$ is the Grothendieck group of the abelian category $B\mbox{\rm -mod-}A$.

{\rm (2)} For any bounded complex $M$ of finite dimensional $B$-$A$-bimodules,
$$\dm M := \sum\limits_{l\in\Z}(-1)^l\ \dm M^l = \sum\limits_{l\in\Z}(-1)^l\ \dm H^l(M).$$
So dimension matrix is invariant under quasi-isomorphisms of bounded complexes of finite dimensional bimodules.
\end{lemma}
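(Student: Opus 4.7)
The plan is to reduce (1) to the additivity of $k$-dimension on short exact sequences of vector spaces, and to obtain (2) from (1) by applying the standard Euler characteristic argument entry-wise.

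For (1), I would fix $1\le i\le n$ and $1\le j\le m$; the $(i,j)$-entry of $\dm M$ is $\dim f_jMe_i$. The functor $F_{ij}:B\mbox{\rm -mod-}A\to k\mbox{\rm -mod}$, $M\mapsto f_jMe_i$, is $k$-linear and \emph{exact}: left multiplication by $f_j$ and right multiplication by $e_i$ are commuting idempotent $k$-linear endomorphisms of $M$ whose composite cuts out $f_jMe_i$ as a $k$-linear direct summand of $M$, so any bimodule homomorphism restricts to a $k$-linear map between the corresponding summands, and exactness of the idempotent truncation is immediate. Hence a short exact sequence $0\to M'\to M\to M''\to 0$ in $B\mbox{\rm -mod-}A$ yields a short exact sequence $0\to f_jM'e_i\to f_jMe_i\to f_jM''e_i\to 0$ of finite-dimensional $k$-vector spaces, whence $(\dm M)_{ij}=(\dm M')_{ij}+(\dm M'')_{ij}$ and therefore $\dm M=\dm M'+\dm M''$. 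The universal property of $K_0(B\mbox{\rm -mod-}A)$ then provides the desired group homomorphism to $\Z^{n\times m}$.

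For (2), let $M$ be a bounded complex in $B\mbox{\rm -mod-}A$ and set $Z^l:=\Ker d^l$ and $B^l:=\Im d^{l-1}$; since the differentials are bimodule homomorphisms, $Z^l$ and $B^l$ are finite-dimensional sub-bimodules of $M^l$, so the canonical short exact sequences
$$0\to Z^l\to M^l\to B^{l+1}\to 0\qquad\text{and}\qquad 0\to B^l\to Z^l\to H^l(M)\to 0$$
live in $B\mbox{\rm -mod-}A$. Applying (1) to each gives $\dm M^l=\dm Z^l+\dm B^{l+1}$ and $\dm Z^l=\dm B^l+\dm H^l(M)$; substituting the second into the first and summing with alternating signs over $l\in\Z$ causes the $Z^l$- and $B^l$-contributions to telescope (the sums being finite by boundedness of $M$), leaving exactly $\sum_{l\in\Z}(-1)^l\dm M^l=\sum_{l\in\Z}(-1)^l\dm H^l(M)$, as required.

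The whole argument is a direct instance of the classical ``Euler characteristic is invariant on bounded complexes'' principle, once the exactness of the truncation functors $M\mapsto f_jMe_i$ is in hand; I do not foresee any substantive obstacle. The only bookkeeping point is to observe that the $Z^l$ and $B^l$ are genuine sub-bimodules of $M^l$, which is automatic because the differentials of a complex of bimodules are bimodule maps.
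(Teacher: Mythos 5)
Your proposal is correct and follows essentially the same route as the paper: exactness of the truncation functor $M\mapsto f_jMe_i$ (the paper phrases it as the exact functor $f_jB\ot_B-\ot_AAe_i$) gives the entrywise additivity in (1), and (2) is the standard Euler-characteristic telescoping argument using the same two short exact sequences $0\to Z^l\to M^l\to B^{l+1}\to 0$ and $0\to B^l\to Z^l\to H^l(M)\to 0$ in $B\mbox{\rm -mod-}A$. No gaps.
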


\begin{proof}
(1) Acting the exact functor $f_jB\ot_B-\ot_AAe_i: B\mbox{\rm -mod-}A \to \mod k$ on the given short exact sequence $0\to M'\to M\to M''\to 0$,
we obtain a short exact sequence $0\to f_jM'e_i\to f_jMe_i\to f_jM''e_i\to 0$ in the category $\mod k$ of finite dimensional $k$-vector spaces.
Thus $(\dm M)_{ij}=(\dm M')_{ij}+(\dm M'')_{ij}$ for all $1\le i\le n$ and $1\le j\le m$.
So $\dm M=\dm M'+\dm M''$.

(2) Let $Z^l(M)$ and $B^l(M)$ be the $l$-cocycle and $l$-coboundary of $M$ respectively.
Then we have short exact sequences $0 \to B^l(M) \to Z^l(M) \to H^l(M) \to 0$
and $0 \to Z^l(M) \to M^l \to B^{l+1}(M) \to 0$ in $B\mbox{\rm -mod-}A$.
By (1), we have $\dm Z^l(M) = \dm B^l(M)+\dm H^l(M)$ and $\dm M^l= \dm Z^l(M) + \dm B^{l+1}(M)$.
So $\dm M^l= \dm B^l(M) + \dm B^{l+1}(M)+\dm H^l(M)$ for all $l\in\Z$.
Thus $\dm M := \sum\limits_{l\in\Z}(-1)^l\ \dm M^l = \sum\limits_{l\in\Z}(-1)^l\ \dm H^l(M).$
\end{proof}

\begin{remark} \label{Remark-DimMatrix-CohomFinDimComplex} {\rm
More general, for a {\it cohomologically finite dimensional} complex $M$ of $B$-$A$-bimodules,
i.e., $\sum\limits_{l\in\Z}\dim H^l(M)<\infty$,
we define its {\it (super) dimension matrix} or {\it (super) Cartan matrix}
$\dm M = C_M := \sum\limits_{l\in\Z}(-1)^l\ \dm H^l(M)$ which is clearly invariant under quasi-isomorphisms of complexes.
Due to Lemma~\ref{Lemma-DimMatrix-qis} (2), this definition extends that for a bounded complex $M$ of finite dimensional $B$-$A$-bimodules.
}\end{remark}

\subsection{HRR type formulas for finite dimensional algebras}

In this subsection, using dimension vector and dimension matrix,
we will give the HRR type formulas on module, bimodule, module complex and bimodule complex four levels
for finite dimensional elementary algebras of finite global dimension.

\bigskip

\noindent{\bf HRR type formulas on module level.}
The following theorem gives cohomological, homological, Hochschild cohomological and Hochschild homological
four versions of HRR type formulas on module level for finite dimensional elementary algebras of finite global dimension.
The cohomological HRR type formula on module level is just Ringel's formula in \cite[Lemma 2.4]{Rin84}.

\begin{theorem} \label{Theorem-HRR-fda}
Let $A$ be a finite dimensional elementary algebra of finite global dimension,
and $\{e_1,\cdots,e_n\}$ a complete set of orthogonal primitive idempotents in $A$.
Then the following four equivalent statements hold:

{\rm (1)} {\rm (Ringel \cite[Lemma 2.4]{Rin84})} For all finite dimensional right $A$-modules $M$ and $N$,
$$\dim(\RHom_A(M,N)) := \sum\limits_{l\ge 0} (-1)^l\ \dim\Ext_A^l(M,N) = \langle \dv M,\dv N \rangle_A$$
where $\langle -,- \rangle_A : \mathbb{Z}^n\times\mathbb{Z}^n\to\mathbb{Z}, (x,y)\mapsto x^T\cdot C_A^{-T}\cdot y$, is the Ringel form of $A$.

{\rm (2)} For all finite dimensional right $A$-module $M$ and finite dimensional left $A$-module $N$,
$$\dim(M\ot^L_AN) := \sum\limits_{l\ge 0} (-1)^l\ \dim\Tor^A_l(M,N) = \langle (\dv N)^T,\dv M \rangle_{A^\op}$$
where $\langle -,- \rangle_{A^\op} : \mathbb{Z}^n\times\mathbb{Z}^n\to\mathbb{Z}, (x,y)\mapsto x^T\cdot C_A^{-1}\cdot y$, is the Ringel form of $A^\op$.

{\rm (3)} For any finite dimensional $A$-bimodule $M$,
$$\dim(\RHom_{A^e}(A,M)) := \sum\limits_{l\ge 0} (-1)^l\ \dim HH^l(A,M) = \tr(C_A^{-T}\cdot \dm M)$$
where $HH^l(A,M):=\Ext_{A^e}^l(A,M)$ is the $l$-th Hochschild cohomology of $A$ with coefficients in $M$.

{\rm (4)} For any finite dimensional $A$-bimodule $M$,
$$\dim(A\ot^L_{A^e}M) := \sum\limits_{l\ge 0} (-1)^l\ \dim HH_l(A,M)  = \tr(C_A^{-1}\cdot \dm M)$$
where $HH_l(A,M):=\Tor^{A^e}_l(A,M)$ is the $l$-th Hochschild homology of $A$ with coefficients in $M$.
\end{theorem}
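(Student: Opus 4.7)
The plan is to prove each of the four statements directly via a bounded projective resolution (which exists by finiteness of global dimension) and to track dimensions through standard adjunctions.

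For (1), take a finite projective resolution $P^\bullet\to M$ with $P^l=\bigoplus_i(e_iA)^{a^l_i}$. Since $\dv(e_iA)$ is the $i$-th column of $C_A$, we have $\dv P^l = C_A\vec a^l$; additivity of $\dv$ then gives $\dv M = C_A\vec b$ with $\vec b := \sum_l(-1)^l\vec a^l$, so $\vec b = C_A^{-1}\dv M$. Applying $\Hom_A(-,N)$ and using $\dim\Hom_A(e_iA,N)=\dim Ne_i = (\dv N)_i$, one computes $\dim\RHom_A(M,N) = \vec b^{\,T}\dv N = (\dv M)^T C_A^{-T}\dv N = \langle\dv M,\dv N\rangle_A$. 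Part (2) uses the same resolution with $-\otimes_A N$ in place of $\Hom_A(-,N)$: since $\dim(e_iA\otimes_A N)=\dim e_iN = (\dv N)_i$, one obtains $\dim(M\otimes^L_A N) = \dv N\cdot\vec b = \dv N\cdot C_A^{-1}\cdot\dv M$, which equals $\langle(\dv N)^T,\dv M\rangle_{A^\op}$ after using $C_{A^\op}=C_A^T$ from Lemma~\ref{Lemma-CartanMatrix-AlgebraTensorOp}(2).

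For (3) and (4), take a bounded projective bimodule resolution $Q^\bullet\to A$ of $A$ in $A^e$-mod, and write $Q^l=\bigoplus_{i,j}(Ae_j\otimes e_iA)^{b^l_{ij}}$. Lemma~\ref{Lemma-CartanMatrix-ModuleTensor} yields the crucial identity $\dm(Ae_j\otimes e_iA) = \dv(e_iA)\cdot\dv(Ae_j) = (C_Ae_i)(e_j^T C_A) = C_A E_{ij} C_A$, where $E_{ij}$ is the elementary matrix. By Lemma~\ref{Lemma-DimMatrix-qis}(2), $C_A = \dm A = \sum_l(-1)^l\dm Q^l = C_A\bigl(\sum_l(-1)^l B^l\bigr)C_A$ with $B^l:=(b^l_{ij})_{i,j}$, so invertibility of $C_A$ forces $\sum_l(-1)^l B^l = C_A^{-1}$. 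Then $\Hom_{A^e}(Ae_j\otimes e_iA,M)\cong e_j M e_i$ has dimension $(\dm M)_{ij}$, giving $\dim\RHom_{A^e}(A,M) = \sum_{i,j}(C_A^{-1})_{ij}(\dm M)_{ij} = \tr(C_A^{-T}\dm M)$, which is (3); while $(Ae_j\otimes e_iA)\otimes_{A^e}M \cong e_i M e_j$ has dimension $(\dm M)_{ji}$, giving $\dim(A\otimes^L_{A^e}M) = \sum_{i,j}(C_A^{-1})_{ij}(\dm M)_{ji} = \tr(C_A^{-1}\dm M)$, which is (4).

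The only real obstacle is notational: keeping matrix transposes and index orderings straight, especially in converting the scalar expression $\sum_{i,j}X_{ij}Y_{ij}$ into a single trace. The stated equivalence of the four formulas can be traced through the standard identifications $\Ext^l_A(M,N)\cong HH^l(A,\Hom_k(M,N))$ and its Tor-analogue, combined with the dimension-matrix identity $\dm(\Hom_k(M,N))=\dv N\cdot(\dv M)^T$; alternatively, each bimodule-level statement follows from the module-level one by additivity of both sides, since every simple $A$-bimodule has the form $S_j^*\otimes_k S_i$ and both sides are determined by their values on composition factors.
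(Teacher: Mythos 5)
Your proposal is correct, but it takes a genuinely different route from the paper's. The paper does not reprove (1): it quotes Ringel's lemma and spends the entire proof on the cycle $(1)\Rightarrow(3)\Rightarrow(4)\Rightarrow(2)\Rightarrow(1)$, obtained by applying (1) over the algebra $A^e$ (using $C_{A^e}=C_A^T\ot C_A$ and the vectorization of $\dm M$, Lemmas~\ref{Lemma-CartanMatrix-AlgebraTensorOp} and~\ref{Lemma-DimVector-Bimod}), by $k$-duality ($\dm M^*=(\dm M)^T$, Lemma~\ref{Lemma-CartanMatrix-ModuleDual}), and by $\Tor^A_l(M,N)\cong HH_l(A,N\ot M)$ together with $\dm(N\ot M)=\dv M\cdot\dv N$ (Lemma~\ref{Lemma-CartanMatrix-ModuleTensor}). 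You instead prove all four statements directly from explicit finite resolutions: (1)--(2) by the classical Euler-characteristic computation (essentially Ringel's own argument, so the citation is replaced by a proof), and (3)--(4) from a finite projective bimodule resolution of $A$, where your observation $C_A=C_A\bigl(\sum_l(-1)^lB^l\bigr)C_A$, hence $\sum_l(-1)^lB^l=C_A^{-1}$, pins down the alternating multiplicities by invertibility of $C_A$ alone; the paper runs a similar bimodule-resolution argument only later (proof of Theorem~\ref{Theorem-Lefschetz-fda}) and there has to invoke Happel's identification $t_{lij}=\dim\Ext^l_A(S_i,S_j)$ from \cite[Lemma 1.5]{Hap89}, which your trick avoids. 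What the paper's route buys is the explicit mutual deducibility of the four formulas, which it then reuses verbatim at the bimodule and complex levels; what yours buys is self-containedness and direct proofs of the Hochschild versions not passing through (1). Two small points you should make explicit: the existence of a bounded projective $A$-bimodule resolution needs the remark that $A^e$ is again a finite dimensional elementary algebra of finite global dimension (equivalently $\pd_{A^e}A<\infty$, using that $A/\rad A\cong k^n$ is separable); and since the theorem also asserts that the four statements are equivalent, either carry out your closing sketch (via $\Ext^l_A(M,N)\cong HH^l(A,\Hom_k(M,N))$ and $\dm\Hom_k(M,N)=\dv N\cdot(\dv M)^T$) or simply note that having proved all four, the equivalence is automatic.
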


\begin{remark} \label{Remark-HRR-Mod} {\rm
(1) The leftmost terms of the HRR type formulas on module level in Theorem~\ref{Theorem-HRR-fda} are just the (super) dimensions
of the complexes $\RHom_A(M,N)$, $M\ot^L_AN$, $\RHom_{A^e}(A,M)$ and $A\ot^L_{A^e}M$ in the derived category $\mathcal{D}k$ of $k$
(See Remark~\ref{Remark-DimMatrix-CohomFinDimComplex}).

(2) We do have the identities $\dim(\RHom_A(M,N)) = \langle \dv N,\dv M \rangle_{A^\op}$ and $\dim(M\ot^L_AN) = \langle \dv M,(\dv N)^T \rangle_A.$ Nonetheless, they are not so natural due to Theorem~\ref{Theorem-HRR-Bimod}.

(3) Unlike those in other literatures, the four HRR type formulas in Theorem~\ref{Theorem-HRR-fda} are identities in $\mathbb{Z}$ but not $k$.
In the case of $A=k$, they are just $\dim\Hom_k(M,N)=\dim M\cdot\dim N$, $\dim(M\otimes N)=\dim M\cdot\dim N$,
$\dim\Hom_k(k,M) \linebreak =\dim M$ and $\dim(k\otimes M)=\dim M$, for finite dimensional $k$-vector spaces $M$ and $N$.
}\end{remark}

\begin{proof} (1) is just \cite[Lemma 2.4]{Rin84}, so it is enough to show that the four statements are equivalent.

$(1)\Rightarrow(3):$
By (1), Lemma~\ref{Lemma-DimVector-Bimod} (1) and Lemma~\ref{Lemma-CartanMatrix-AlgebraTensorOp} (3), we have
$$\begin{array}{ll}
& \sum\limits_{l\ge 0} (-1)^l\ \dim HH^l(A,M) \\ [4mm]
= & \sum\limits_{l\ge 0} (-1)^l\ \dim \Ext_{A^e}^l(A,M) \\ [2mm]
\stackrel{(1)}{=} & (\dv A_{A^e})^T \cdot C_{A^e}^{-T} \cdot \dv M_{A^e} \\ [4mm]
\stackrel{\rm 2L}{=} & \begin{pmatrix} \dv e_1A \\ \vdots \\ \dv e_nA \end{pmatrix}^T \cdot (C_A^{-1} \ot C_A^{-T}) \cdot
\begin{pmatrix} \dv e_1M \\ \vdots \\ \dv e_nM \end{pmatrix} \\ [10mm]
= & \begin{pmatrix}\dv e_1A \\ \vdots \\ \dv e_nA\end{pmatrix}^T \cdot
\begin{pmatrix} (C_A^{-1})_{11}\cdot C_A^{-T} & \cdots & (C_A^{-1})_{1n}\cdot C_A^{-T}\\
\vdots & \ddots & \vdots\\ (C_A^{-1})_{n1}\cdot C_A^{-T} & \cdots & (C_A^{-1})_{nn}\cdot C_A^{-T} \end{pmatrix} \cdot
\begin{pmatrix} \dv e_1M \\ \vdots \\ \dv e_nM \end{pmatrix} \\ [10mm]
= & \sum\limits_{1\le i,j\le n}(\dv e_iA)^T\cdot(C_A^{-1})_{ij}\cdot C_A^{-T}\cdot \dv e_jM \\ [4mm]
\stackrel{(1)}{=} & \sum\limits_{1\le i,j\le n} (C_A^{-1})_{ij}\cdot \dim\Hom_A(e_iA,e_jM) \\ [5mm]
= & \sum\limits_{1\le i,j\le n} (C_A^{-1})_{ij}\cdot (\dm M)_{ij} \\ [5mm]
= & \tr(C_A^{-T}\cdot \dm M).
\end{array}$$

$(3)\Rightarrow(4):$
Note that $HH_l(A,M) = H_l(A\ot_{A^e}^LM) \cong H^l((A\ot_{A^e}^LM)^*) \cong H^l(\RHom_{A^e}(A,M^*)) = HH^l(A,M^*)$ for all $l\in\Z$.
By (3) and Lemma~\ref{Lemma-CartanMatrix-ModuleDual}, we have
$$\begin{array}{ll}
& \sum\limits_{l\ge 0} (-1)^l\ \dim HH_l(A,M) \\
= & \sum\limits_{l\ge 0} (-1)^l\ \dim HH^l(A,M^*) \stackrel{(3)}{=} \tr(C_A^{-T}\cdot \dm M^*) \\
\stackrel{\rm L}{=} & \tr(C_A^{-T}\cdot (\dm M)^T) = \tr(\dm M\cdot C_A^{-1}) = \tr(C_A^{-1}\cdot \dm M).
\end{array}$$

$(4)\Rightarrow(2):$
Note that $\Tor^A_l(M,N)=H_l(M\ot^L_AN) \cong H_l(A\ot^L_{A^e}(N\ot M)) = HH_l(A,N\ot M)$ for all $l\in\Z$.
By (4), Lemma~\ref{Lemma-CartanMatrix-ModuleTensor} and Lemma~\ref{Lemma-CartanMatrix-AlgebraTensorOp} (2), we have
$$\begin{array}{ll}
& \sum\limits_{l\ge 0} (-1)^l\ \dim\Tor^A_l(M,N) \\
= & \sum\limits_{l\ge 0} (-1)^l\ \dim HH_l(A,N\ot M) \stackrel{(4)}{=} \tr(C_A^{-1}\cdot \dm(N\ot M)) \\
\stackrel{\rm L}{=} & \tr(C_A^{-1}\cdot \dv M\cdot \dv N) = \tr(\dv N\cdot C_A^{-1}\cdot \dv M) \\ [2mm]
= & \dv N\cdot C_A^{-1}\cdot \dv M \stackrel{\rm L}{=} \langle (\dv N)^T,\dv M \rangle_{A^\op}.
\end{array}$$

$(2)\Rightarrow(1):$ Note that
$$\begin{array}{ll}
\Ext_A^l(M,N) & = H^l(\RHom_A(M,N)) \cong H^l((M\ot^L_AN^*)^*) \\ [2mm]
& \cong H_l(M\ot^L_AN^*) =\Tor^A_l(M,N^*)
\end{array}$$
for all $l\in\Z$.
By (2) and Lemma~\ref{Lemma-CartanMatrix-ModuleDual}, we have
$$\begin{array}{ll}
& \sum\limits_{l\ge 0} (-1)^l\ \dim\Ext_A^l(M,N) \\
= & \sum\limits_{l\ge 0} (-1)^l\ \dim\Tor^A_l(M,N^*) \stackrel{(2)}{=} \dv N^*\cdot C_A^{-1}\cdot \dv M \\
\stackrel{\rm L}{=} & (\dv N)^T\cdot C_A^{-1}\cdot \dv M =(\dv M)^T\cdot C_A^{-T}\cdot \dv N = \langle \dv M,\dv N \rangle_A. \end{array}$$

Therefore, the four statements (1), (2), (3) and (4) are equivalent.
\end{proof}

\bigskip

Taking the $A$-bimodule $M$ in Theorem~\ref{Theorem-HRR-fda} (3) and (4) to be the $A$-bimodule $A$, we obtain the following two corollaries:

\begin{corollary} \label{Corollary-Happel} {\rm (Happel \cite[Theorem 2.2]{Hap97})}
Let $A$ be a finite dimensional elementary algebra of finite global dimension,
and $\Phi_A := -C_A^{-T}\cdot C_A$ the Coxeter matrix of $A$. Then
$$\dim(\RHom_{A^e}(A,A)) := \sum\limits_{l\ge 0} (-1)^l\ \dim HH^l(A) = -\tr \Phi_A.$$
\end{corollary}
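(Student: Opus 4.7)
The plan is to obtain this as an immediate specialization of Theorem~\ref{Theorem-HRR-fda}~(3) to the particular $A$-bimodule $M=A$. First I would note that, by Remark~\ref{Remark-CartanMatrix-LeftRightMod}~(1), the dimension matrix $\dm{_AA_A}$ coincides with the Cartan matrix $C_A$. Substituting $M=A$ into the identity
$$\sum_{l\ge 0}(-1)^l\,\dim HH^l(A,M)=\tr(C_A^{-T}\cdot \dm M)$$
therefore yields
$$\sum_{l\ge 0}(-1)^l\,\dim HH^l(A)=\tr(C_A^{-T}\cdot C_A).$$

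It remains only to recognize the right-hand side as $-\tr\Phi_A$: by the very definition $\Phi_A=-C_A^{-T}\cdot C_A$, and since the trace is $k$-linear, $\tr(C_A^{-T}\cdot C_A)=-\tr(-C_A^{-T}\cdot C_A)=-\tr\Phi_A$. Combining these two identities gives the claimed formula. There is no real obstacle here, since Theorem~\ref{Theorem-HRR-fda}~(3) already does the substantive work; the corollary is essentially a change of notation together with the identification $\dm{_AA_A}=C_A$.
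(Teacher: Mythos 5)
Your proposal is correct and matches the paper's own approach exactly: the paper derives the corollary precisely by setting $M=A$ in Theorem~\ref{Theorem-HRR-fda}~(3), using $\dm{_AA_A}=C_A$, and then unwinding the definition of the Coxeter matrix.
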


\begin{corollary} \label{Corollary-Zhang-Liu} {\rm (Zhang-Liu \cite[Theorem]{ZhaLiu97}) }
Let $A$ be a finite dimensional elementary algebra of finite global dimension, and $n:=\dim A/\rad A=\rank K_0(A)$. Then
$$\dim(A\ot^L_{A^e}A) := \sum\limits_{l\ge 0} (-1)^l\ \dim HH_l(A)=n.$$
\end{corollary}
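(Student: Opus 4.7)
The plan is to derive this as an immediate specialization of the Hochschild homological HRR type formula on module level, namely Theorem~\ref{Theorem-HRR-fda} (4), by taking the coefficient bimodule $M$ to be the regular $A$-bimodule $A$ itself.

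First I would invoke Theorem~\ref{Theorem-HRR-fda} (4) with $M = {_AA_A}$, which yields
$$\sum_{l\ge 0}(-1)^l\ \dim HH_l(A,A) = \tr(C_A^{-1}\cdot \dm{_AA_A}).$$
By Remark~\ref{Remark-CartanMatrix-LeftRightMod} (1), the dimension matrix $\dm{_AA_A}$ is precisely the Cartan matrix $C_A$ of the algebra. Hence $C_A^{-1}\cdot \dm{_AA_A} = C_A^{-1}\cdot C_A = I_n$, the $n\times n$ identity matrix, and consequently the trace equals $n$.

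It remains only to identify $n$ with $\dim A/\rad A = \rank K_0(A)$. Since $A$ is elementary with complete set $\{e_1,\dots,e_n\}$ of orthogonal primitive idempotents, we have $A/\rad A \cong k^n$ so $\dim A/\rad A = n$, and the rank of $K_0(A)$ equals the number of isomorphism classes of indecomposable projectives, which is again $n$. The usual Hochschild homology $HH_l(A) = HH_l(A,A)$ appears on the left by definition, completing the argument.

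The proof is essentially immediate once Theorem~\ref{Theorem-HRR-fda} (4) is in hand; there is no real obstacle, as the only substantive input---the fact that $C_A$ is invertible over $\Z$ under finite global dimension---has already been used to formulate Theorem~\ref{Theorem-HRR-fda} itself.
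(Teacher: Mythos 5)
Your proof is correct and coincides with the paper's own argument: the paper obtains this corollary precisely by taking the $A$-bimodule $M$ in Theorem~\ref{Theorem-HRR-fda} (4) to be $A$ itself, so that $\dm{_AA_A}=C_A$ and the right-hand side becomes $\tr(C_A^{-1}\cdot C_A)=n$. The closing identification $n=\dim A/\rad A=\rank K_0(A)$ for an elementary algebra is also as in the paper.
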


The following result is subtler than Corollary~\ref{Corollary-Zhang-Liu}.
It is \cite[Proposition 6]{Han06} obtained directly from \cite[Proposition 2.5]{Kel98}.
Alternatively, on one hand, by \cite[5.3]{Shk13}, we have
$HH_0(A) \cong A/[A,A]$ and $HH_l(A)=0$ for all $l\ge 1$.
On the other hand, by \cite[\S 5 Korollar]{Len69}, we can obtain $\rad A = [A,A]$.
Thus $HH_0(A) \cong k^n$ and $HH_l(A)=0$ for all $l\ge 1$.

\begin{proposition} \label{Proposition-HH-GloDimFinAlg} {\rm (Keller \cite[Proposition 2.5]{Kel98})}
Let $A$ be a finite dimensional elementary algebra of finite global dimension, and $n:=\dim A/\rad A=\rank K_0(A)$. Then
$HH_0(A) \cong k^n$ and $HH_l(A)=0$ for all $l\ge 1$.
\end{proposition}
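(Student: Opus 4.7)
The plan is to follow the alternative route sketched in the remark preceding the statement: prove the degree-zero identification and the higher vanishing separately, and then apply Lenzing's classical commutator identity to pin down $HH_0$ as $k^n$.

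First, I would record the universal identity $HH_0(A) \cong A/[A,A]$, which holds for any $k$-algebra: applying $A \ot_{A^e} -$ to the bar resolution $\cdots \to A \ot A \ot A \to A \ot A$ leaves, in degree zero, the cokernel of the map $a \ot b \mapsto ab - ba$, which is $A/[A,A]$. No finiteness hypothesis on $A$ is needed at this step.

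Next, I would establish $HH_l(A) = 0$ for all $l \ge 1$. For this I would invoke Keller's \cite[Proposition 2.5]{Kel98}, or equivalently the statement recorded in \cite[5.3]{Shk13}. Morally, the underlying reason is that finite global dimension makes $A$ smooth (the bimodule $_AA_A$ becomes a perfect object of $\D(A^e)$), and the resulting rigidity of the derived category forces the Hochschild chain complex of $A$ to be acyclic in positive degrees. A self-contained derivation would proceed by choosing a minimal projective bimodule resolution of $A$ and tracking what happens to its differentials after applying $A \ot_{A^e} -$.

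Finally, I would combine the first two steps with Lenzing's theorem \cite[\S 5 Korollar]{Len69}, which asserts that $[A,A] = \rad A$ for any finite dimensional algebra of finite global dimension. This yields $HH_0(A) \cong A/\rad A$, and the hypothesis that $A$ is elementary converts the right-hand side into $k^n$ with $n = \dim A/\rad A = \rank K_0(A)$.

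The main obstacle is clearly the higher vanishing. Happel's bound on Hochschild cohomological dimension only guarantees $HH_l(A) = 0$ for $l$ \emph{sufficiently large}, so one cannot simply quote a global-dimension estimate. The strict vanishing from $l = 1$ onward is genuinely finer and is where the combination of smoothness and properness does real work; in Keller's proof one passes to the level of dg categories and exploits derived Morita invariance, while Shklyarov's argument is cast in the language of smooth proper dg algebras. Either way, this step is strictly deeper than the HRR-type Corollary~\ref{Corollary-Zhang-Liu}, which only controls the alternating sum $\sum_l (-1)^l \dim HH_l(A)$.
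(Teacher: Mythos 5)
Your proposal follows the paper's own alternative route exactly: cite \cite[5.3]{Shk13} (equivalently \cite[Proposition 2.5]{Kel98}) for $HH_0(A) \cong A/[A,A]$ together with the vanishing $HH_l(A) = 0$ for $l \ge 1$, then apply Lenzing's identity $[A,A] = \rad A$ from \cite{Len69} and elementarity to conclude $HH_0(A) \cong A/\rad A \cong k^n$. Deriving the universal isomorphism $HH_0(A) \cong A/[A,A]$ independently from the bar resolution is a cosmetic reorganization rather than a different argument.
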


Proposition~\ref{Proposition-HH-GloDimFinAlg} will play crucial roles in determining
the main ingredients in Shklyarov's formulas and Petit's formulas for finite dimensional elementary algebras of finite global dimension.

\bigskip

\noindent{\bf HRR type formulas on bimodule level.}
The following theorem gives the cohomological and homological HRR type formulas on bimodule level for finite dimensional elementary algebras of finite global dimension.
It generalizes at first sight but is essentially equivalent to Theorem~\ref{Theorem-HRR-fda}.

\begin{theorem} \label{Theorem-HRR-Bimod}
Let $A,B$ and $C$ be three finite dimensional elementary algebras, $A$ of finite global dimension,
and $\{e_1,\cdots, e_n\}$, $\{f_1,\cdots,f_m\}$ and $\{g_1,\cdots,g_p\}$ complete sets of orthogonal primitive idempotents of $A$, $B$ and $C$ respectively.
Then the following two equivalent statements hold:

{\rm (1)} For all finite dimensional $B$-$A$-bimodule $M$ and finite dimensional $C$-$A$-bimodule $N$,
$$\dm(\RHom_A(M,N)) := \sum\limits_{l\ge 0} (-1)^l\ \dm \Ext_A^l(M,N) = \langle \dm M,\dm N\rangle_A$$
where $\langle -,- \rangle_A : \Z^{n\times m} \times \Z^{n\times p} \to \Z^{m\times p},
(x,y)\mapsto x^T\cdot C_A^{-T}\cdot y$.

{\rm (2)} For all finite dimensional $B$-$A$-bimodule $M$ and finite dimensional $A$-$C$-bimodule $N$,
$$\dm(M\ot^L_AN) := \sum\limits_{l\ge 0} (-1)^l\ \dm \Tor^A_l(M,N)  = \langle (\dm N)^T,\dm M \rangle_{A^\op}$$
where $\langle -,- \rangle_{A^\op} : \Z^{n\times p} \times \Z^{n\times m} \to \Z^{p\times m},
(x,y)\mapsto x^T\cdot C_A^{-1}\cdot y$.
\end{theorem}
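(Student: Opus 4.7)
The plan is to reduce Theorem~\ref{Theorem-HRR-Bimod} to the module-level Theorem~\ref{Theorem-HRR-fda} by extracting entries of the dimension matrices via the idempotents $f_j$ of $B$ and $g_k$ of $C$. Multiplication by any idempotent is exact and commutes with $\RHom_A$, so one obtains a natural isomorphism $g_k\RHom_A(M,N)f_j \cong \RHom_A(f_jM, g_kN)$ in $\D k$, where $f_jM$ and $g_kN$ are viewed as right $A$-modules. Passing to cohomology, the $(j,k)$-entry of $\dm\RHom_A(M,N) = \sum_{l\ge 0}(-1)^l\,\dm\Ext_A^l(M,N)$ equals $\sum_{l\ge 0}(-1)^l\dim\Ext_A^l(f_jM, g_kN)$.

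By Ringel's formula (Theorem~\ref{Theorem-HRR-fda} (1)), this sum equals $(\dv f_jM)^T\, C_A^{-T}\, \dv g_kN$. Now Lemma~\ref{Lemma-DimVector-Bimod} (1) identifies $\dv f_jM$ with the $j$-th column of $\dm M$, and the same lemma applied to the $C$-$A$-bimodule $N$ identifies $\dv g_kN$ with the $k$-th column of $\dm N$. Hence the $(j,k)$-entry of $\dm\RHom_A(M,N)$ coincides with that of $(\dm M)^T\, C_A^{-T}\, \dm N = \langle \dm M, \dm N\rangle_A$, proving (1).

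For (2), I would pass to (1) via $k$-duality. The tensor–hom adjunction gives a natural $C$-$B$-bimodule isomorphism $\RHom_A(M, N^*) \cong (M\ot^L_A N)^*$, so Lemma~\ref{Lemma-CartanMatrix-ModuleDual} (extended to cohomologically finite complexes as in Remark~\ref{Remark-DimMatrix-CohomFinDimComplex}) yields $\dm\RHom_A(M,N^*) = (\dm(M\ot^L_A N))^T$. Applying (1) to $\RHom_A(M,N^*)$ and using $\dm N^* = (\dm N)^T$ gives $(\dm(M\ot^L_A N))^T = (\dm M)^T\, C_A^{-T}\, (\dm N)^T$; transposing yields $\dm(M\ot^L_A N) = \dm N\cdot C_A^{-1}\cdot \dm M = \langle (\dm N)^T, \dm M\rangle_{A^\op}$, as required. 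The same duality gives the reverse implication.

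The main technical point is justifying the derived-level identification $g_k\RHom_A(M,N)f_j \cong \RHom_A(f_jM, g_kN)$; this holds because $f_jM$ is a direct summand of $M$ as a right $A$-module and $g_kN$ of $N$, so projective resolutions descend compatibly under the exact endofunctors $f_j(-)$ and $g_k(-)$. Beyond this, the argument is essentially bookkeeping: one must track the row/column conventions for dimension vectors of left versus right modules (per Lemma~\ref{Lemma-DimVector-Bimod} and Remark~\ref{Remark-CartanMatrix-LeftRightMod}) together with the transposes introduced when passing between $A$ and $A^\op$.
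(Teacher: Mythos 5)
Your proof of part (1) is essentially the paper's argument: extract the $(j,k)$-entry of $\dm\RHom_A(M,N)$ by multiplying by $f_j$ and $g_k$, use that these functors are exact and carry a bimodule projective resolution of $M$ to a projective resolution of $f_jM$ as a right $A$-module, identify $g_k\Ext_A^l(M,N)f_j\cong\Ext_A^l(f_jM,g_kN)$, and then invoke Ringel's formula (Theorem~\ref{Theorem-HRR-fda}~(1)) together with Lemma~\ref{Lemma-DimVector-Bimod}~(1). The paper spells out the resolution step slightly more explicitly ($f_iB\ot_B P_M$ with $P_M$ a bimodule projective resolution, then the exact functor $g_jC\ot_C-\ot_BBf_i$), but your justification captures the same mechanism.

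Where you genuinely diverge is in part (2). The paper proves (2) by running the same idempotent-extraction argument for $\Tor$, reducing directly to Theorem~\ref{Theorem-HRR-fda}~(2). You instead derive (2) from the already established (1) via $k$-duality: $\RHom_A(M,N^*)\cong(M\ot^L_AN)^*$ and $\dm N^*=(\dm N)^T$, then transpose. This is correct and a bit slicker, since it saves a second resolution argument at the bimodule level; the mild cost is that you need the statement $\dm X^*=(\dm X)^T$ at the level of cohomologically finite complexes, which you correctly flag as an extension of Lemma~\ref{Lemma-CartanMatrix-ModuleDual} justified by Remark~\ref{Remark-DimMatrix-CohomFinDimComplex} (and indeed the paper itself uses exactly this duality shuttle inside the proof of Theorem~\ref{Theorem-HRR-fda}, just at the module level rather than the bimodule level). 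One small point worth making explicit: the ``equivalence'' of (1) and (2) in the theorem statement is established in the paper by showing each is equivalent to the corresponding part of Theorem~\ref{Theorem-HRR-fda} (with necessity obtained by specializing $B=k=C$); your derivation gives (1)$\Rightarrow$(2) and, as you note, the same duality reverses to give (2)$\Rightarrow$(1), which is an acceptable alternative packaging.
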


\medskip

\begin{remark} {\rm
Unlike the cohomological and homological HRR type formulas on module level (See Remark~\ref{Remark-HRR-Mod} (2)),
in general, we have no $\dm(\RHom_A(M,N)) = \langle \dm N,\dm M \rangle_{A^\op}$,
since its left hand side is an $m\times p$ matrix but its right hand side is a $p\times m$ matrix.
Similarly, in general, we have no $\dm(M\ot^L_AN) = \langle \dm M,(\dm N)^T \rangle_A$ either.
}\end{remark}

\medskip

\begin{proof}
By Theorem~\ref{Theorem-HRR-fda}, it suffices to show that Theorem~\ref{Theorem-HRR-Bimod} (1) (resp. (2)) holds if and only if so does Theorem~\ref{Theorem-HRR-fda} (1) (resp. (2)).

\medskip

Theorem~\ref{Theorem-HRR-Bimod} (1) $\Leftrightarrow$ Theorem~\ref{Theorem-HRR-fda} (1):

{\it Sufficiency.} It is enough to prove
$$\sum\limits_{l\ge 0}\ (-1)^l\ (\dm \Ext_A^l(M,N))_{ij}
= ((\dm M)^T\cdot C_A^{-T}\cdot \dm N)_{ij}$$
for all $1\le i\le m$ and $1\le j\le p$.
For this, let $P_M$ be any projective resolution of the $B$-$A$-bimodule $M$.
Since $f_iB$ is a projective right $B$-module, $f_iB\ot_B P_M$ is a projective resolution of the right $A$-module $f_iB\ot_B M\cong f_iM$.
Note that $g_jC\ot_C-\ot_BBf_i$ is an exact functor
from the category $C\mbox{\rm -Mod-}B$ of $C$-$B$-bimodules to the category $\Mod k$ of $k$-vector spaces.
Thus $g_j\Ext_A^l(M,N)f_i
\cong g_jC\ot_CH^l(\Hom_A(P_M,N))\ot_BBf_i
\cong H^l(g_jC\ot_C\Hom_A(P_M,N)\ot_BBf_i)
\cong H^l(\Hom_A(f_i B\ot_B P_M,g_jC\ot_C N))
\cong \Ext_A^l(f_i M,g_j N)$
for all $1\le i\le m$ and $1\le j\le p$.
By Theorem~\ref{Theorem-HRR-fda} (1), we have
$$\begin{array}{ll}
& \sum\limits_{l\ge 0} (-1)^l\ (\dm \Ext_A^l(M,N))_{ij} \\ [4mm]
= & \sum\limits_{l\ge 0} (-1)^l\ \dim\ g_j\Ext_A^l(M,N)f_i = \sum\limits_{l\ge 0} (-1)^l\ \dim \Ext_A^l(f_i M,g_j N) \\ [4mm]
\stackrel{\rm T}{=} & (\dv f_iM)^T\cdot C_A^{-T}\cdot \dv g_jN = ((\dm M)^T\cdot C_A^{-T}\cdot \dm N)_{ij}
\end{array}$$
for all $1\le i\le m$ and $1\le j\le p$.

{\it Necessity.} Taking $B=k=C$ in Theorem~\ref{Theorem-HRR-Bimod} (1), we obtain Theorem~\ref{Theorem-HRR-fda} (1).

\medskip

Theorem~\ref{Theorem-HRR-Bimod} (2) $\Leftrightarrow$ Theorem~\ref{Theorem-HRR-fda} (2):

{\it Sufficiency.} It is enough to prove
$$\sum\limits_{l\ge 0}\ (-1)^l\ (\dm \Tor^A_l(M,N))_{ij} = (\dm N\cdot C_A^{-1}\cdot \dm M)_{ij}$$
for all $1\le i\le p$ and $1\le j\le m$.
For this, let $P_M$ be any projective resolution of the $B$-$A$-bimodule $M$.
Then $f_jB\ot_B P_M$ is a projective resolution of the right $A$-module $f_jM$.
Note that $f_jB\ot_B-\ot_CCg_i: B\mbox{\rm -Mod-}C \to \Mod k$ is an exact functor.
Thus $f_j\Tor^A_l(M,N) g_i \cong f_jB\ot_BH^l(P_M\ot_AN)\ot_CCg_i \cong H^l(f_jB\ot_BP_M\ot_AN\ot_CCg_i) \cong \Tor^A_l(f_j M,N g_i)$ for all $1\le i\le p$ and  $1\le j\le m$.
By Theorem~\ref{Theorem-HRR-fda} (2), we have
$$\begin{array}{ll}
& \sum\limits_{l\ge 0} (-1)^l\ (\dm \Tor^A_l(M,N))_{ij} \\ [4mm]
= & \sum\limits_{l\ge 0} (-1)^l\ \dim\ f_j\Tor^A_l(M,N)g_i = \sum\limits_{l\ge 0} (-1)^l\ \dim \Tor^A_l(f_jM,Ng_i) \\ [4mm]
\stackrel{\rm T}{=} & \dv Ng_i \cdot C_A^{-1}\cdot \dv f_jM = (\dm N\cdot C_A^{-1}\cdot \dm M)_{ij}
\end{array}$$
for all $1\le i\le p$ and $1\le j\le m$.

{\it Necessity.} Taking $B=k=C$ in Theorem~\ref{Theorem-HRR-Bimod} (2), we get Theorem~\ref{Theorem-HRR-fda} (2).
\end{proof}

\bigskip

\noindent{\bf HRR type formulas on complex level.}
The following theorem gives cohomological, homological, Hochschild cohomological and Hochschild homological four versions of HRR type formulas on complex level for finite dimensional elementary algebras of finite global dimension,
which generalizes at first glance but is essentially equivalent to Theorem~\ref{Theorem-HRR-fda}.

\begin{theorem} \label{Theorem-HRR-Complex}
Let $A$ be a finite dimensional elementary algebra of finite global dimension,
and $\{e_1,\cdots,e_n\}$ a complete set of orthogonal primitive idempotents in $A$.
Then the following four equivalent statements hold:

{\rm (1)} For all bounded complexes $M$ and $N$ of finite dimensional right $A$-modules,
$$\dim(\RHom_A(M,N)) :=\sum\limits_{l\in\Z} (-1)^l\ \dim\Ext_A^l(M,N) = \langle\dv M , \dv N \rangle_A$$
where $\langle -,- \rangle_A : \mathbb{Z}^n\times\mathbb{Z}^n\to\mathbb{Z}, (x,y)\mapsto x^T\cdot C_A^{-T}\cdot y$, is the Ringel form of $A$.

{\rm (2)} For all bounded complex $M$ of finite dimensional right $A$-modules
and bounded complex $N$ of finite dimensional left $A$-modules,
$$\dim(M\ot^L_AN) := \sum\limits_{l\in\Z} (-1)^l\ \dim\Tor^A_l(M,N) = \langle (\dv N)^T ,\dv M \rangle_{A^\op}$$
where $\langle -,- \rangle_{A^\op} : \mathbb{Z}^n\times\mathbb{Z}^n\to\mathbb{Z}, (x,y)\mapsto x^T\cdot C_A^{-1}\cdot y$, is the Ringel form of $A^\op$.

{\rm (3)} For any bounded complex $M$ of finite dimensional $A$-bimodules,
$$\dim(\RHom_{A^e}(A,M)) := \sum\limits_{l\in\Z} (-1)^l\ \dim HH^l(A,M) = \tr(C_A^{-T}\cdot \dm M).$$

{\rm (4)} For any bounded complex $M$ of finite dimensional $A$-bimodules,
$$\dim(A\ot^L_{A^e}M) := \sum\limits_{l\in\Z} (-1)^l\ \dim HH_l(A,M)  = \tr(C_A^{-1}\cdot \dm M).$$
\end{theorem}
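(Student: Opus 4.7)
The plan is to reduce the complex-level identities to their module-level analogues in Theorem~\ref{Theorem-HRR-fda} by an additivity argument in the bounded derived category, and to obtain the equivalence of (1)--(4) by essentially the same chain of implications used at the module level.

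First, I would verify that the four statements are mutually equivalent along the chain (1)$\Rightarrow$(3)$\Rightarrow$(4)$\Rightarrow$(2)$\Rightarrow$(1). The natural isomorphisms $HH_l(A,M)\cong HH^l(A,M^*)$, $\Tor^A_l(M,N)\cong HH_l(A,N\ot M)$ and $\Ext^l_A(M,N)\cong\Tor^A_l(M,N^*)$ that drive the module-level proof still hold for bounded complexes (the tensor products remain underived because $k$ is a field), so the same manipulations of Cartan matrices via Lemmas~\ref{Lemma-CartanMatrix-AlgebraTensorOp}, \ref{Lemma-CartanMatrix-ModuleTensor} and \ref{Lemma-CartanMatrix-ModuleDual} apply. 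One only needs to record that the identities $\dv M^* = (\dv M)^T$ and $\dm(N\ot M) = \dv M \cdot \dv N$ survive in the complex setting; this is immediate by termwise application of the module-level relations together with Lemma~\ref{Lemma-DimMatrix-qis}(2) and collecting signs. Hence it suffices to prove~(1).

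To prove (1), I would argue that both sides are additive on distinguished triangles in $\D^b(\mod A)$. The left-hand side is additive because $\RHom_A(-,N)$ and $\RHom_A(M,-)$ send triangles to triangles in $\D k$, and the super-dimension of a cohomologically finite complex is additive on triangles via its long exact cohomology sequence and Lemma~\ref{Lemma-DimMatrix-qis}(2); the right-hand side is additive since $\dv$ is additive on triangles (same reason) and $\langle-,-\rangle_A$ is bilinear. Iterating the stupid truncation triangles $\sigma^{\ge i+1}M\to\sigma^{\ge i}M\to M^i[-i]\to\sigma^{\ge i+1}M[1]$, and inducting on the number of nonzero components of $M$ (and then of $N$), reduces the claim to the case $M = X[-i]$, $N = Y[-j]$ with $X, Y$ single finite dimensional right $A$-modules. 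In this case $\Ext^l_A(X[-i],Y[-j])=\Ext^{l+i-j}_A(X,Y)$, so the left-hand side equals $(-1)^{i-j}\sum_l(-1)^l\dim\Ext^l_A(X,Y)$, while $\dv M = (-1)^i\dv X$ and $\dv N = (-1)^j\dv Y$ make the right-hand side $(-1)^{i+j}\langle\dv X,\dv Y\rangle_A = (-1)^{i-j}\langle\dv X,\dv Y\rangle_A$. Theorem~\ref{Theorem-HRR-fda}(1) then closes the argument.

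The main obstacle is bookkeeping. Additivity of the super-dimension, super-dimension vector and super-dimension matrix on distinguished triangles (as opposed to merely on short exact sequences) must be extracted from Lemma~\ref{Lemma-DimMatrix-qis} via the long exact cohomology sequence of a triangle, and the shift signs in the interconversion identities linking (1)--(4) need to be tracked carefully so that they match the signs introduced by the stupid truncations. Once these routine verifications are in place, the reduction to Theorem~\ref{Theorem-HRR-fda} is essentially immediate.
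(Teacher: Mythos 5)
Your proof is correct, but it takes a genuinely different route from the paper. The paper proves, for each $i\in\{1,2,3,4\}$ separately, that Theorem~\ref{Theorem-HRR-Complex}~($i$) is equivalent to Theorem~\ref{Theorem-HRR-fda}~($i$): for sufficiency it invokes Remark~\ref{Remark-HRR-Complex-CohFin} to replace $M$ by a bounded complex of finite dimensional projective (or injective) modules, then unfolds the total Hom or tensor complex termwise to write, e.g., $\dim\Hom_A(M,N)=\sum_{i,j}(-1)^{j-i}\dim\Hom_A(M^i,N^j)$, uses bilinearity of the Ringel form to unfold the right-hand side analogously, and matches terms using the module-level identity; the four complex-level statements are then declared equivalent because the four module-level ones already are. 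You instead prove the equivalence chain $(1)\Rightarrow(3)\Rightarrow(4)\Rightarrow(2)\Rightarrow(1)$ directly at the complex level by transplanting the module-level manipulations (which is fine, since the duality and tensor--Hom identifications and the relations $\dv M^*=(\dv M)^T$, $\dm(N\ot M)=\dv M\cdot\dv N$ all survive termwise with the expected signs), and you establish (1) by a triangulated-category d\'evissage: additivity of both sides on distinguished triangles in $\D^b(\mod A)$, stupid truncations to reduce to shifted modules $X[-i]$, $Y[-j]$, shift bookkeeping, and an appeal to Theorem~\ref{Theorem-HRR-fda}~(1). Your route buys conceptual transparency and avoids passing to projective or injective resolutions of the complex; the paper's route is more explicit, staying close to the termwise computations that are later reused when matching the formulas against Shklyarov's. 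Both are correct, and your sign calculus ($\Ext^l_A(X[-i],Y[-j])\cong\Ext^{l+i-j}_A(X,Y)$ against $\dv X[-i]=(-1)^i\dv X$, with $(-1)^{i+j}=(-1)^{j-i}$) checks out.
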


\begin{remark} \label{Remark-HRR-Complex-CohFin} {\rm
Since $A$ is a finite dimensional elementary algebra of finite global dimension,
any cohomologically finite dimensional complex of left (resp. right) $A$-modules
is quasi-isomorphic to a bounded complex of finite dimensional projective (injective) left (resp. right) $A$-modules.
Moreover, $A^e$ is also a finite dimensional elementary algebra of finite global dimension.
By Remark~\ref{Remark-DimMatrix-CohomFinDimComplex}, we may freely replace
``bounded complex of finite dimensional left (resp. right) modules'' in Theorem~\ref{Theorem-HRR-Complex} with
``bounded complex of finite dimensional projective (injective) left (resp. right) modules''
or ``cohomologically finite dimensional complex of left (resp. right) modules''.
}\end{remark}

\begin{proof}
By Theorem~\ref{Theorem-HRR-fda}, it is enough to show that Theorem~\ref{Theorem-HRR-Complex} (1) (resp. (2), (3) and (4)) holds if and only if so does Theorem~\ref{Theorem-HRR-fda} (1) (resp. (2), (3) and (4)).

\medskip

Theorem~\ref{Theorem-HRR-Complex} (1) $\Leftrightarrow$ Theorem~\ref{Theorem-HRR-fda} (1):

{\it Sufficiency.}
By Remark~\ref{Remark-HRR-Complex-CohFin}, we may assume that $M$ is a bounded complex of finite dimensional projective right $A$-modules.
By Lemma~\ref{Lemma-DimMatrix-qis} (2), we have
$$\begin{array}{ll}
\sum\limits_{l\in\Z} (-1)^l\ \dim\Ext_A^l(M,N)
& = \dim \Hom_A(M,N) \\
& = \sum\limits_{i,j\in\Z} (-1)^{j-i}\ \dim\Hom_A(M^i,N^j).
\end{array}$$
On the other hand, we have
$$\begin{array}{ll}
\langle\dv M , \dv N \rangle_A
& = \langle \sum\limits_{i\in\Z}(-1)^i\ \dv M^i , \sum\limits_{j\in\Z}(-1)^j\ \dv N^j \rangle_A \\ [5mm]
& =\sum\limits_{i,j\in\Z}(-1)^{i+j}\ \langle \dv M^i , \dv N^j \rangle_A.
\end{array}$$
Now it suffices to prove $\dim\Hom_A(M^i,N^j)=\langle \dv M^i , \dv N^j \rangle_A$ for all $i,j\in\Z$.
This is obvious by Theorem~\ref{Theorem-HRR-fda} (1), since $M^i$ is a finite dimensional projective right $A$-module.

{\it Necessity.} It is clear.

\medskip

Theorem~\ref{Theorem-HRR-Complex} (2) $\Leftrightarrow$ Theorem~\ref{Theorem-HRR-fda} (2):

{\it Sufficiency.} By Remark~\ref{Remark-HRR-Complex-CohFin}, we may assume that $M$ is a bounded complex of finite dimensional projective right $A$-modules.
By Lemma~\ref{Lemma-DimMatrix-qis} (2), we have
$$\sum\limits_{l\in\Z} (-1)^l\ \dim\Tor^A_l(M,N)
= \dim (M \ot_A N) = \sum\limits_{i,j\in\Z} (-1)^{i+j}\ \dim(M^i \ot_A N^j).$$
On the other hand, we have
$$\begin{array}{ll}
\langle (\dv N)^T , \dv M \rangle_{A^\op}
& = \langle \sum\limits_{j\in\Z}(-1)^j\ (\dv N^j)^T ,\sum\limits_{i\in\Z}(-1)^i\ \dv M^i \rangle_{A^\op} \\ [5mm]
& = \sum\limits_{i,j\in\Z}(-1)^{i+j}\ \langle (\dv N^j)^T ,\dv M^i \rangle_{A^\op}.
\end{array}$$
Now it suffices to prove $\dim(M^i \ot_A N^j) = \langle (\dv N^j)^T ,\dv M^i \rangle_{A^\op}$ for all $i,j\in\Z$.
This is obvious by Theorem~\ref{Theorem-HRR-fda} (2), since $M^i$ is a finite dimensional projective right $A$-module.

{\it Necessity.} It is clear.

\medskip

Theorem~\ref{Theorem-HRR-Complex} (3) $\Leftrightarrow$ Theorem~\ref{Theorem-HRR-fda} (3):

{\it Sufficiency.}
By Remark~\ref{Remark-HRR-Complex-CohFin}, we may assume that $M$ is a bounded complex of finite dimensional injective $A$-bimodules.
By Lemma~\ref{Lemma-DimMatrix-qis} (2), we have
$$\sum\limits_{l\in\Z} (-1)^l\ \dim\Ext_{A^e}^l(A,M)
= \dim \Hom_{A^e}(A,M) = \sum\limits_{i\in\Z} (-1)^i\ \dim\Hom_{A^e}(A,M^i).$$
On the other hand, we have
$$\tr(C_A^{-T}\cdot \dm M) = \tr(C_A^{-T}\cdot(\sum\limits_{i\in\Z}(-1)^i\ \dm {M^i}))
=\sum\limits_{i\in\Z}(-1)^i\ \tr(C_A^{-T}\cdot \dm {M^i}).$$
Now it suffices to prove $\dim\Hom_{A^e}(A,M^i)=\tr(C_A^{-T}\cdot \dm {M^i})$ for all $i\in\Z$.
This is obvious by Theorem~\ref{Theorem-HRR-fda} (3), since $M^i$ is a finite dimensional injective $A$-bimodule.

{\it Necessity.} It is clear.

\medskip

Theorem~\ref{Theorem-HRR-Complex} (4) $\Leftrightarrow$ Theorem~\ref{Theorem-HRR-fda} (4):

{\it Sufficiency.}
By Remark~\ref{Remark-HRR-Complex-CohFin}, we may assume that $M$ is a bounded complex of finite dimensional projective $A$-bimodules.
By Lemma~\ref{Lemma-DimMatrix-qis} (2), we have
$$\sum\limits_{l\in\Z} (-1)^l\ \dim\Tor^{A^e}_l(A,M)
= \dim(A\ot_{A^e}M) = \sum\limits_{i\in\Z} (-1)^i\ \dim(A\ot_{A^e}M^i).$$
On the other hand, we have
$$\tr(C_A^{-1}\cdot \dm M) = \tr(C_A^{-1}\cdot (\sum\limits_{i\in\Z}(-1)^i\ \dm {M^i}))
=\sum\limits_{i\in\Z}(-1)^i\ \tr(C_A^{-1}\cdot \dm {M^i}).$$
Now it suffices to prove $\dim(A\ot_{A^e}M^i)=\tr(C_A^{-1}\cdot \dm {M^i})$ for all $i\in\Z$.
This is obvious by Theorem~\ref{Theorem-HRR-fda} (4), since $M^i$ is a finite dimensional projective $A$-bimodule.

{\it Necessity.} It is clear.
\end{proof}

\bigskip

\noindent{\bf HRR type formulas on bimodule complex level.}
The following theorem gives the cohomological and homological HRR type formulas on bimodule complex level for finite dimensional elementary algebras of finite global dimension.
It generalizes at first sight but is essentially equivalent to Theorem~\ref{Theorem-HRR-Complex}.

\begin{theorem} \label{Theorem-HRR-BimodComplex}
Let $A,B$ and $C$ be three finite dimensional elementary algebras, $A$ of finite global dimension,
and $\{e_1,\cdots, e_n\}$, $\{f_1,\cdots,f_m\}$ and $\{g_1,\cdots,g_p\}$ complete sets of orthogonal primitive idempotents of $A$, $B$ and $C$ respectively.
Then the following two equivalent statements hold:

{\rm (1)} For all bounded complex $M$ of finite dimensional $B$-$A$-bimodules
and bounded complex $N$ of finite dimensional $C$-$A$-bimodules,
$$\dm(\RHom_A(M,N)) := \sum\limits_{l\in\Z} (-1)^l\ \dm \Ext_A^l(M,N) = \langle \dm M,\dm N\rangle_A$$
where $\langle -,- \rangle_A : \Z^{n\times m} \times \Z^{n\times p} \to \Z^{m\times p},
(x,y)\mapsto x^T\cdot C_A^{-T}\cdot y$.

{\rm (2)} For all bounded complex $M$ of finite dimensional $B$-$A$-bimodules
and bounded complex $N$ of finite dimensional $A$-$C$-bimodules,
$$\dm(M\ot^L_AN) := \sum\limits_{l\in\Z} (-1)^l\ \dm \Tor^A_l(M,N)  = \langle (\dm N)^T,\dm M \rangle_{A^\op}$$
where $\langle -,- \rangle_{A^\op} : \Z^{n\times p} \times \Z^{n\times m} \to \Z^{p\times m},
(x,y)\mapsto x^T\cdot C_A^{-1}\cdot y$.
\end{theorem}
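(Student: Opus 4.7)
The plan is to mirror the proof of Theorem~\ref{Theorem-HRR-Bimod}, replacing single bimodules with bounded complexes of bimodules throughout, and to reduce each of (1) and (2) to the corresponding statement in Theorem~\ref{Theorem-HRR-Complex}. Necessity will be immediate by specialising $B=k=C$, since then $\dm{_kX_A}=\dv X_A$ and $\dm{_AX_k}=\dv{_AX}$; the substance is in the sufficiency direction.

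For (1), I would argue entrywise: fix $1\le i\le m$ and $1\le j\le p$ and aim to show
$$\sum_{l\in\Z}(-1)^l\,\bigl(\dm\Ext_A^l(M,N)\bigr)_{ij}=\bigl((\dm M)^T\cdot C_A^{-T}\cdot\dm N\bigr)_{ij}.$$
Following the template of Theorem~\ref{Theorem-HRR-Bimod} (1), the first step is to establish the natural isomorphism
$$g_j\,\Ext_A^l(M,N)\,f_i\;\cong\;\Ext_A^l(f_iM,\,g_jN),\qquad l\in\Z,$$
with $f_iM$ and $g_jN$ now bounded complexes of finite dimensional right $A$-modules. Granted this, Theorem~\ref{Theorem-HRR-Complex} (1) applied to $f_iM$ and $g_jN$ evaluates the left side to $(\dv f_iM)^T\cdot C_A^{-T}\cdot\dv g_jN$, and because $\dv f_iM$ is the $i$-th column of $\dm M$ and $\dv g_jN$ the $j$-th column of $\dm N$ (by Lemma~\ref{Lemma-DimVector-Bimod} (1) applied termwise and extended to complexes via the definition of $\dm$), this equals the desired entry. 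Part (2) runs in parallel, with $f_jB\ot_B-\ot_CCg_i$ in place of $g_jC\ot_C-\ot_BBf_i$, the analogous isomorphism $f_j\,\Tor_l^A(M,N)\,g_i\cong\Tor_l^A(f_jM,Ng_i)$, and Theorem~\ref{Theorem-HRR-Complex} (2) in place of (1).

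The main obstacle will be the complex version of the displayed $\Ext$-isomorphism. For a single bimodule it came from tensoring a projective $B$-$A$-bimodule resolution of $M$ by $f_iB$ over $B$; for a complex $M$ one needs a bounded-above resolution $P\to M$ in the category of complexes of $B$-$A$-bimodules whose terms are projective as right $A$-modules, so that $f_iB\ot_B P\to f_iM$ is a projective right-$A$-module resolution and the exact functor $g_jC\ot_C-\ot_BBf_i$ commutes with cohomology to yield the isomorphism. I would produce such a $P$ either by totalising a Cartan-Eilenberg projective resolution of $M$ over $B^\op\ot A$ and then using $\gl A<\infty$ to truncate to a complex bounded on the $A$-side, or more directly by applying the standard relative bar resolution over $A$ term by term to $M$ and totalising. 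Once this resolution is in hand, the remainder of the argument is routine bookkeeping with Lemma~\ref{Lemma-DimMatrix-qis} and the definition of $\dm$ for complexes.
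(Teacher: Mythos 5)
Your proposal is correct and matches the paper's own proof almost verbatim: the paper likewise argues entrywise, reuses the $\Ext$/$\Tor$ corner isomorphisms from Theorem~\ref{Theorem-HRR-Bimod}, and reduces to Theorem~\ref{Theorem-HRR-Complex}, with the only change being that a projective bimodule resolution of $M$ is replaced by a homotopically projective resolution $P_M$ of the bimodule complex $M$, which is exactly the object your Cartan--Eilenberg totalisation or termwise bar resolution would produce. The one small slip is that no truncation using $\gl A<\infty$ is needed: any bounded-above complex of right-$A$-projective terms quasi-isomorphic to $M$ already does the job, since $f_iB\ot_B P_M$ is then automatically a homotopically projective resolution of $f_iM$ and the exact functor $g_jC\ot_C-\ot_BBf_i$ commutes with cohomology.
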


\begin{proof}
We employ the same proof as Theorem~\ref{Theorem-HRR-Bimod} with merely the following modifications:
Let $P_M$ be any homotopically projective resolution of the bounded complex $M$ of finite dimensional $B$-$A$-bimodules.
Since $f_iB$ is a projective right $B$-module, $f_iB\ot_B P_M$ is a homotopically projective resolution of the bounded complex $f_iB\ot_B M\cong f_iM$ of  finite dimensional right $A$-modules.
\end{proof}

\subsection{Comparisons with HRR type formulas for dg algebras} \label{Subsection-HRR-dga}

The HRR type formulas for dg algebras were given by Shklyarov in \cite{Shk13}.
For any triangular algebra over a field of characteristic zero,
Shklyarov deduced Ringel's formula in Theorem~\ref{Theorem-HRR-fda} (1) from his formula in Theorem~\ref{Theorem-HRR-dga} below (Ref. \cite[5.1]{Shk13}).
In this subsection, we show that, for any finite dimensional elementary algebra of finite global dimension over a field of characteristic zero,
Shklyarov's formula in Theorem~\ref{Theorem-HRR-dga} is just the cohomological HRR formula on complex level in Theorem~\ref{Theorem-HRR-Complex} (1) which is equivalent to Ringel's formula in Theorem~\ref{Theorem-HRR-fda} (1), and Shklyarov's formula in Proposition~\ref{Proposition-HRR-Tensor-dga} below is just the homological HRR formula on complex level in Theorem~\ref{Theorem-HRR-Complex} (2). Two main ingredients in Shklyarov's formulas are Chern character and Shklyarov pairing.

\bigskip

\noindent{\bf Chern character.} Let $A$ be a dg algebra.
Denote by $\mathcal{C}_\mathrm{dg}(A)$ the dg category of all dg right $A$-modules
and $\mathcal{H}A:=H^0(\mathcal{C}_\mathrm{dg}(A))$ the homotopy category of $A$.
A dg right $A$-module $M$ is {\it perfect} if it is a homotopy direct summand of a finitely generated semi-free dg right $A$-module,
i.e., there is a finitely generated semi-free dg right $A$-module $N$
and a pair of degree 0 closed morphisms $\phi:M\to N$ and $\psi:N\to M$ in $\mathcal{C}_\mathrm{dg}(A)$,
or equivalently, morphisms $\phi:M\to N$ and $\psi:N\to M$ in $\mathcal{H}A$,
such that $\psi\phi=\id_M$ in $\mathcal{H}A$.
Denote by $\Per A$ the full dg subcategory of $\mathcal{C}_\mathrm{dg}(A)$ consisting of all perfect dg right $A$-modules.

For each $M\in \mathrm{Per}A$, we have a dg tensor product functor
$T_M := -\ot_kM : \mathrm{Per}k \to \mathrm{Per}A.$
It induces a graded $k$-linear map
$HH_\bullet(T_M) : k=HH_\bullet(k)\cong HH_\bullet(\mathrm{Per}k) \to HH_\bullet(\mathrm{Per}A)\cong HH_\bullet(A)$
by the agreement and functoriality of Hochschild homology \cite{Kel99}.
The {\it Chern character} (or {\it Euler character}, {\it Chern class}, {\it Euler class}) of $M$
is the 0-th Hochschild homology class $\mathrm{ch}(M) := \linebreak HH_0(T_M)(1_k) \in HH_0(A).$

If $M,N \in \mathrm{Per}A$ are homotopy equivalent then $\mathrm{ch}(M)=\mathrm{ch}(N)$ (Ref. \cite[Proposition 3.1]{Shk13}).
If $M \in \mathrm{Per}A$ then $\mathrm{ch}(M[1])=-\mathrm{ch}(M)$.
Moreover, for any triangle $L\to M\to N\to L[1]$ in the homotopy category $\mathcal{H}(\mathrm{Per}A) := H^0(\mathrm{Per}A)$ of $\mathrm{Per}A$,
we have $\mathrm{ch}(M)=\mathrm{ch}(L)+\mathrm{ch}(N)$ (Ref. \cite[Proposition 3.2]{Shk13}).
So we have {\it Chern character map}
$\mathrm{ch} : K_0(\mathcal{H}(\mathrm{Perf}A)) \to HH_0(A), [M]\mapsto \mathrm{ch}(M)$, which is a group homomorphism.
In particular, if $M\in \mathrm{Per}k$ then $\mathrm{ch}(M) = \sum\limits_{l\in\Z}(-1)^l\ \dim H^l(M) \in k$, which is also equal to $\sum\limits_{l\in\Z}(-1)^l\ \dim M^l$ if $M$ is a bounded complex of finite dimensional $k$-vector spaces.

There exists a natural chain isomorphism $(-)^\vee$ from the Hochschild chain complex $C_\bullet(A)$ of $A$
to the Hochschild chain complex $C_\bullet(A^\op)$ of $A^\op$,
and further a graded $k$-linear isomorphism $(-)^\vee : HH_\bullet(A)\to HH_\bullet(A^\op)$.
Moreover, $\mathrm{ch}(\Hom_{\Per A}(M,A))=\mathrm{ch}(M)^\vee$ for all $M\in\Per A$ (Ref. \cite[Proposition 4.5]{Shk13}).

\bigskip

\noindent{\bf Shklyarov pairing.}
Let $A$ be a {\it proper} dg algebra, i.e., a dg algebra of finite dimensional total cohomology.
Then the dg functor $-\ot_{A\ot A^\op}A : \mathrm{Per}(A\ot A^\op) \to \mathrm{Per}k$ induces a chain morphism
$C_\bullet(-\ot_{A\ot A^\op}A) : C_\bullet(\Per(A\ot A^\op)) \rightarrow C_\bullet(\Per k)$
where $C_\bullet(\mathcal{A})$ denotes the Hochschild chain complex of an exact dg category $\mathcal{A}$.
The natural dg functor $\Per A\ot \Per A^\op \to \mathrm{Per}(A\ot A^\op)$ induces a chain morphism
$C_\bullet(\Per A \ot \Per A^\op) \to C_\bullet(\Per(A\ot A^\op)).$
Moreover, we have the K\"{u}nneth morphism $C_\bullet(\Per A)\ot C_\bullet(\Per A^\op) \to C_\bullet(\Per A\ot\Per A^\op)$.
The composition of these three chain morphisms is a chain morphism
$C_\bullet(\Per A)\ot C_\bullet(\Per A^\op) \to C_\bullet(\Per k)$.
Taking cohomology, we obtain a pairing
$\langle -,- \rangle : HH_\bullet(\mathrm{Per}A) \ot HH_\bullet(\mathrm{Per}A^\op) \to k.$
Using the agreement isomorphism $HH_\bullet(A)\cong HH_\bullet(\mathrm{Per}A)$ (Ref. \cite[Theorem 2.4]{Kel99}),
we obtain the {\it Shklyarov pairing}
$$\langle -,- \rangle : HH_\bullet(A) \ot HH_\bullet(A^\op) \to k$$
of $A$ (Ref. \cite[(1.7)]{Shk13}) which is nondegenerate if $A$ is a proper smooth dg algebra \cite[Theorem 1.4]{Shk13}. Recall that a dg algebra $A$ is {\it (homologically) smooth} if $A$ is compact in the derived category ${\cal D}A^e$ of $A^e$, or equivalently, the dg $A$-bimodule $A$ is quasi-isomorphic to a perfect dg $A$-bimodule.

\bigskip

\noindent{\bf HRR type formulas for dg algebras.}
The HRR type formulas for dg algebras are the Shklyarov's formulas below.

\begin{theorem} \label{Theorem-HRR-dga} {\rm (Shklyarov \cite[Theorem 1.2 and Theorem 1.3]{Shk13})}
Let $A$ be a proper dg algebra, and $M,N$ two perfect dg right $A$-modules. Then
$$\mathrm{ch}(\Hom_{\Per A}(M,N)) = \langle \mathrm{ch}(N) , \mathrm{ch}(M)^\vee\rangle$$
where $\langle -,- \rangle$ is the Shklyarov pairing of $A$.
\end{theorem}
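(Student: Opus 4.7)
The plan is to realize both sides of the identity as images of a single element under the same chain-level construction, then invoke the functoriality and agreement isomorphisms for Hochschild homology. The starting observation I would use is that, for perfect $M$, the dg functor $\Hom_{\Per A}(M,-) : \Per A \to \Per k$ is naturally equivalent to $(-)\circ T_{M^\vee}$ followed by tensor over $A$; more concretely, there is a natural isomorphism $\Hom_{\Per A}(M,N) \cong M^\vee \otimes_A^L N$ in $\Per k$, where $M^\vee := \Hom_{\Per A}(M,A)$ is a perfect dg left $A$-module (equivalently, a perfect dg right $A^\op$-module). Under this identification, the left-hand side $\mathrm{ch}(\Hom_{\Per A}(M,N))$ is the Chern character of $M^\vee \otimes_A^L N$.

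Next I would expand the definition of the Shklyarov pairing. By its construction in the excerpt, $\langle -,-\rangle$ is the composition on $HH_0$ of the K\"unneth morphism $C_\bullet(\Per A)\otimes C_\bullet(\Per A^\op) \to C_\bullet(\Per A \otimes \Per A^\op)$, the natural morphism $C_\bullet(\Per A \otimes \Per A^\op) \to C_\bullet(\Per(A\otimes A^\op))$, and the morphism induced by the dg functor $(-)\otimes_{A\otimes A^\op} A : \Per(A\otimes A^\op) \to \Per k$. The entire composite is itself the chain morphism induced by a single dg bifunctor $\Per A \times \Per A^\op \to \Per k$, namely $(N,M^\vee) \mapsto M^\vee \otimes_A^L N$. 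Thus, plugging in the two Chern characters, $\langle \mathrm{ch}(N), \mathrm{ch}(M)^\vee\rangle$ is, by definition, the image of $\mathrm{ch}(N)\otimes \mathrm{ch}(M^\vee)$ under $HH_0$ of this bifunctor; here I use Shklyarov's identity $\mathrm{ch}(M^\vee)=\mathrm{ch}(M)^\vee$ recorded in the excerpt.

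The remaining step is to show that this image equals $\mathrm{ch}(M^\vee \otimes_A^L N)$. The key input is the functoriality of the Chern character: for any exact dg functor $F : \mathcal{A} \to \mathcal{B}$ between exact dg categories and any object $X\in \mathcal{A}$, one has $HH_0(F)(\mathrm{ch}(X)) = \mathrm{ch}(F(X))$, because the Chern character is itself built from the action of the one-object dg category $k$ via the tensor functor $T_X$, and everything in sight is natural. Applying this to the bifunctor above and combining with the K\"unneth compatibility $\mathrm{ch}(X\otimes Y) = \mathrm{ch}(X)\otimes \mathrm{ch}(Y)$ gives the desired identification of the image with $\mathrm{ch}(M^\vee \otimes_A^L N) = \mathrm{ch}(\Hom_{\Per A}(M,N))$.

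The main obstacle I expect is a careful bookkeeping of the agreement isomorphism $HH_\bullet(A)\cong HH_\bullet(\Per A)$, and of signs/shifts in the K\"unneth map, so that the bifunctor interpretation of the Shklyarov pairing is rigorously justified rather than merely suggestive; once that is in place, the proof is pure functoriality. A secondary obstacle is verifying $\mathrm{ch}(\Hom_{\Per A}(M,N))=\mathrm{ch}(M^\vee\otimes_A^L N)$, which requires the natural isomorphism to be realized by a homotopy equivalence in $\Per k$ that is functorial in $M$ and $N$; this is standard for perfect modules over a proper dg algebra, but is the only place where properness of $A$ is essential, since it ensures $M^\vee \otimes_A^L N$ actually lands in $\Per k$.
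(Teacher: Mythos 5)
The paper offers no proof of this statement; it is cited directly from Shklyarov \cite[Theorem 1.2 and Theorem 1.3]{Shk13}, so there is no in-paper argument to compare against. Your sketch does follow Shklyarov's original strategy: replace $\Hom_{\Per A}(M,N)$ by $M^\vee\ot_A N$, use $\mathrm{ch}(M)^\vee=\mathrm{ch}(M^\vee)$, and track what the pairing does to Chern characters via functoriality together with a K\"unneth compatibility. That decomposition is the right one.

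One caution, however. The claim that the three-stage composite defining the pairing ``is itself the chain morphism induced by a single dg bifunctor $\Per A\times\Per A^\op\to\Per k$'' is a misstatement. The last two stages are induced by dg functors $\Per A\ot\Per A^\op\to\Per(A\ot A^\op)\to\Per k$, but the K\"unneth map $C_\bullet(\Per A)\ot C_\bullet(\Per A^\op)\to C_\bullet(\Per A\ot\Per A^\op)$ is a separate chain-level construction, not the Hochschild-chain map of any dg functor. So you cannot dispose of the whole thing by invoking ``pure functoriality'' once; you need a genuine multiplicativity lemma, namely that the K\"unneth map carries $\mathrm{ch}(N)\ot\mathrm{ch}(M^\vee)$ to $\mathrm{ch}(N\boxtimes M^\vee)$ compatibly with the agreement isomorphism $HH_\bullet(A)\cong HH_\bullet(\Per A)$, and then apply functoriality to the remaining dg functor. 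You do flag K\"unneth bookkeeping as your ``main obstacle,'' which shows you know where the work is, but as written the argument reads more formal than it actually is; in Shklyarov's paper this multiplicativity of Chern characters under K\"unneth is where most of the technical content lives. A smaller correction: properness of $A$ is not used only to ensure $M^\vee\ot_A^L N$ lands in $\Per k$; it is already needed for $-\ot_{A\ot A^\op}A$ to take values in $\Per k$, which is what makes the pairing well defined in the first place.
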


\begin{proposition} \label{Proposition-HRR-Tensor-dga} {\rm (Shklyarov \cite[Proposition 4.4]{Shk13})}
Let $A$ be a proper dg algebra, $M$ a perfect dg right $A$-module, and $N$ a perfect dg left $A$-module. Then
$$\mathrm{ch}(M\ot_AN) = \langle \mathrm{ch}(M),\mathrm{ch}(N) \rangle$$
where $\langle -,- \rangle$ is the Shklyarov pairing of $A$.
\end{proposition}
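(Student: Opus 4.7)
The plan is to unwind the definition of the Shklyarov pairing and verify that, applied to $\mathrm{ch}(M)\otimes\mathrm{ch}(N)$, it yields exactly $\mathrm{ch}(M\otimes_A N)$. The argument is purely functorial, resting on two properties of the Chern character: its compatibility with external products of dg modules, and its naturality under dg functors.

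First, I would establish a multiplicativity lemma asserting that, for $M\in\Per A$ and $N\in\Per B$, the element $\mathrm{ch}(M)\otimes\mathrm{ch}(N)$ maps to $\mathrm{ch}(M\otimes_k N)$ under the composite
\[
HH_0(\Per A)\otimes HH_0(\Per B) \xrightarrow{\text{K\"u}} HH_0(\Per A\otimes\Per B)\to HH_0(\Per(A\otimes B)),
\]
where $M\otimes_k N$ carries its natural perfect dg $(A\otimes B)$-module structure. This follows from functoriality of $HH_\bullet$ applied to the external product of the dg functors $T_M$ and $T_N$ that define the two Chern characters, together with the agreement isomorphism $HH_\bullet(R)\cong HH_\bullet(\Per R)$.

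Next, I would apply the lemma with $B=A^\op$ to conclude that the first two chain morphisms in the definition of the Shklyarov pairing send $\mathrm{ch}(M)\otimes\mathrm{ch}(N)$ to $\mathrm{ch}(M\otimes_k N)\in HH_0(\Per(A\otimes A^\op))$. Then I would invoke naturality of the Chern character under the third chain morphism, induced by the dg functor $-\otimes_{A\otimes A^\op}A:\Per(A\otimes A^\op)\to\Per k$, to obtain
\[
\langle\mathrm{ch}(M),\mathrm{ch}(N)\rangle = \mathrm{ch}\bigl((M\otimes_k N)\otimes_{A\otimes A^\op}A\bigr).
\]
A direct unpacking of actions (every element is equivalent to $(m\otimes n)\otimes 1$, and the surviving relation $(ma\otimes n)\otimes 1 = (m\otimes an)\otimes 1$ is exactly the one defining the $A$-balanced tensor) identifies $(M\otimes_k N)\otimes_{A\otimes A^\op}A$ with $M\otimes_A N$ in $\Per k$, completing the identity.

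The main obstacle is the multiplicativity lemma of the first step. A rigorous proof demands careful chain-level bookkeeping: one must track the K\"unneth morphism on Hochschild chain complexes of dg categories, commute it past the external-product dg functor $\Per A\otimes\Per B\to\Per(A\otimes B)$, and match the resulting map with the one induced by the external product of $T_M$ and $T_N$, all while threading through the agreement isomorphism. Once this commutativity of functorial diagrams is established, the remaining steps are formal consequences of naturality of $HH_\bullet$ and of the basic computation $(M\otimes_k N)\otimes_{A\otimes A^\op}A\cong M\otimes_A N$.
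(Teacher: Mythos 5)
The paper offers no proof of this proposition at all --- it is imported verbatim from Shklyarov \cite{Shk13} --- so there is no internal argument to compare against; your outline is, in substance, a reconstruction of Shklyarov's own proof. It is correct as a strategy: represent $\mathrm{ch}(M)$ and $\mathrm{ch}(N)$ by the $0$-cycles $\id_M$ and $\id_N$, push them through the three chain morphisms defining the pairing, and identify $(M\ot N)\ot_{A\ot A^\op}A$ with $M\ot_A N$ (this last point is the same standard isomorphism the paper uses elsewhere, e.g.\ in the step $\Tor^A_l(M,N)\cong HH_l(A,N\ot M)$ of Theorem~\ref{Theorem-HRR-fda}). You also correctly locate the real mathematical content in the multiplicativity statement that $\mathrm{ch}(M)\otimes\mathrm{ch}(N)$ maps to $\mathrm{ch}(M\ot N)$ under the K\"{u}nneth map followed by $HH_0(\Per A\ot\Per A^\op)\to HH_0(\Per(A\ot A^\op))$; that is precisely the K\"{u}nneth compatibility of the Chern character established in \cite{Shk13}, and it is the one step your proposal leaves as a sketch (shuffle-product and Koszul-sign bookkeeping threaded through the agreement isomorphism), so as written the proposal is an outline resting on a cited-in-spirit lemma rather than a complete self-contained proof. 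Two points worth making explicit in a write-up: properness of $A$ is what guarantees that $-\ot_{A\ot A^\op}A$ lands in $\Per k$, so the third chain morphism (and hence the pairing) is defined; and since $M\ot N$ is homotopically projective over $A\ot A^\op$, the underived tensor product computes the derived one, which is why the identification with $M\ot_A N$ may be performed at the level of $\Per k$ without further resolution.
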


\bigskip

\noindent{\bf Shklyarov pairing for finite dimensional algebras.}
Let $A$ be a finite dimensional elementary algebra of finite global dimension,
and $\{e_1,\cdots,e_n\}$ a complete set of orthogonal primitive idempotents of $A$.
Thanks to Proposition~\ref{Proposition-HH-GloDimFinAlg},
we may assume that $e_1,\cdots,e_n$ and $e_1^\vee:=e_1,\cdots,e_n^\vee:=e_n$
are $k$-bases of $HH_0(A)=A/[A,A]=\bigoplus\limits_{i=1}^nke_i$ and $HH_0(A^\op)=A^\op/[A^\op,A^\op]=\bigoplus\limits_{i=1}^nke_i^\vee$ respectively.
Here, the notations $e_1^\vee,\cdots,e_n^\vee\in A^\op$ are used to distinguish $e_1,\cdots,e_n\in A^\op$ from $e_1,\cdots,e_n\in A$ on one hand,
and are harmonious with the isomorphism $(-)^\vee : HH_0(A)\to HH_0(A^\op)$ on the other hand.
In our situation, the Shklyarov pairing is quite clear.

\begin{proposition} \label{Proposition-Pairing-CartanMatrix}
Let $A$ be a finite dimensional elementary algebra of finite global dimension,
and $\{e_1,\cdots,e_n\}$ a complete set of orthogonal primitive idempotents of $A$.
Then the matrix of the Shklyarov pairing
$\langle -,- \rangle : HH_0(A) \ot HH_0(A^\op) \to k$
under the basis $e_1,\cdots,e_n$ of $HH_0(A)$ and the basis $e_1^\vee,\cdots,e_n^\vee$ of $HH_0(A^\op)$
is the transpose $C_A^T$ of the Cartan matrix $C_A$ of the algebra $A$.
\end{proposition}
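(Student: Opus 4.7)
The plan is to apply Shklyarov's tensor-product HRR formula (Proposition~\ref{Proposition-HRR-Tensor-dga}) to the pair $M=e_iA$, $N=Ae_j$ for each pair of indices $1\le i,j\le n$, and to identify the resulting Chern characters explicitly. Viewing $A$ as a dg algebra concentrated in degree zero, $A$ is automatically proper since it is finite dimensional over $k$, so the hypothesis of Proposition~\ref{Proposition-HRR-Tensor-dga} is met. Both $e_iA$ and $Ae_j$ are finitely generated projective, hence perfect as dg right $A$-modules and dg right $A^\op$-modules respectively; and the tensor product $e_iA\ot_A Ae_j \cong e_iAe_j$ is a finite dimensional $k$-vector space concentrated in degree zero, whose Chern character in $HH_0(k)=k$ is therefore just $\dim(e_iAe_j)$.

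The crucial step is the identification $\mathrm{ch}(e_iA)=e_i\in HH_0(A)$ and $\mathrm{ch}(Ae_j)=e_j^\vee \in HH_0(A^\op)$ in the bases chosen in the paragraph preceding the proposition. Unwinding the definition, the dg functor $T_{e_iA}:\Per k\to \Per A$ is $V\mapsto V\ot e_iA$, and the induced map $HH_0(T_{e_iA}):HH_0(k)\to HH_0(A)$ coincides, under Keller's agreement isomorphism $HH_\bullet(\Per A)\cong HH_\bullet(A)$, with the Hattori--Stallings trace of $\mathrm{id}_{e_iA}$. Since $e_iA$ is the image of the idempotent endomorphism $a\mapsto e_ia$ of the free right $A$-module $A$, this trace is precisely the class $[e_i]\in A/[A,A]=HH_0(A)$. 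Running the same argument over $A^\op$ gives $\mathrm{ch}(Ae_j)=[e_j^\vee]=e_j^\vee$.

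Combining these with Proposition~\ref{Proposition-HRR-Tensor-dga} yields
\[
\langle e_i,\, e_j^\vee \rangle = \langle \mathrm{ch}(e_iA),\, \mathrm{ch}(Ae_j) \rangle = \mathrm{ch}(e_iA\ot_A Ae_j) = \mathrm{ch}(e_iAe_j) = \dim(e_iAe_j).
\]
By the very definition of the Cartan matrix, $(C_A)_{ji}=\dim e_iAe_j$, so $\langle e_i, e_j^\vee\rangle=(C_A^T)_{ij}$, which is precisely the claim that the matrix of the Shklyarov pairing in these bases is $C_A^T$.

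The main obstacle is the Chern character identity $\mathrm{ch}(e_iA)=[e_i]$: although it is the natural Hattori--Stallings trace of the identity of a projective summand, making it rigorous requires chasing the unit $1\in HH_0(k)$ through the composition $HH_0(k)\to HH_0(\Per k)\to HH_0(\Per A)\to HH_0(A)$ and invoking Keller's agreement theorem from \cite{Kel99}; everything else is immediate from Proposition~\ref{Proposition-HRR-Tensor-dga} and the definition of the Cartan matrix.
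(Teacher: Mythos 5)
Your argument is correct and gives the right answer, but it takes a slightly different path than the paper. The paper proves the proposition by applying Shklyarov's Hom-formula (Theorem~\ref{Theorem-HRR-dga}) to the pair $(M,N)=(e_jA,e_iA)$, which gives
$\langle e_i, e_j^\vee\rangle = \langle \mathrm{ch}(e_iA), \mathrm{ch}(e_jA)^\vee\rangle = \mathrm{ch}(\Hom_{\Per A}(e_jA,e_iA)) = \dim e_iAe_j = (C_A)_{ji}$;
you instead apply the tensor version (Proposition~\ref{Proposition-HRR-Tensor-dga}) to $(e_iA, Ae_j)$, giving $\langle e_i, e_j^\vee\rangle = \mathrm{ch}(e_iA\ot_A Ae_j) = \dim e_iAe_j$. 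The two are linked by Shklyarov's identity $\mathrm{ch}(\Hom_{\Per A}(M,A)) = \mathrm{ch}(M)^\vee$, which the paper records just before the proposition and which makes $\mathrm{ch}(Ae_j) = e_j^\vee$ automatic; your Hattori--Stallings argument for this identification is a legitimate alternative. Both proofs hinge on the unproved-in-the-paper identity $\mathrm{ch}(e_iA)=e_i$; the paper simply asserts it (``Note that...'') while you sketch the standard idempotent-trace argument. The trade-off is minor: the paper's route only invokes the $(-)^\vee$ map once inside the pairing formula, while your route requires identifying the Chern character on the $A^\op$ side separately, but in exchange you avoid the $(-)^\vee$ bookkeeping entirely and the tensor computation $e_iA\ot_A Ae_j\cong e_iAe_j$ is arguably more transparent than the Hom computation.
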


\begin{proof}
Note that $\mathrm{ch}(e_iA)=e_i$ for all $1\le i\le n$.
By Theorem~\ref{Theorem-HRR-dga}, we have
$\langle e_i,e_j^\vee\rangle = \langle \mathrm{ch}(e_i A),\mathrm{ch}(e_j A)^\vee\rangle
=\dim\Hom_A(e_j A, e_iA)=(C_A)_{ji}$ for all $1\le i,j\le n$. So the matrix of the Shklyarov pairing $\langle -,- \rangle$ under the bases is $C_A^T$.
\end{proof}

\bigskip

\noindent{\bf Chern characters for finite dimensional algebras.}
For a finite dimensional elementary algebra $A$ of finite global dimension,
up to homotopy equivalences,
$\Per A$ consists of all bounded complexes of finite dimensional projective right $A$-modules.
Moreover, $K_0(\mathcal{H}(\Per A))\cong K_0(\proj A)\cong K_0(\mod A)\cong \Z^n$.

The following proposition relates Chern character with dimension vector,
and provides a concrete formula to calculate Chern character.
In fact, it holds for all perfect dg right $A$-module $M$,
since $\mathrm{ch}(M)$ is invariant under homotopy equivalences of perfect dg right $A$-modules and $\dv M$ is well-defined for any cohomologically finite dimensional complex $M$ of right $A$-modules
and invariant under quasi-isomorphisms of complexes of right $A$-modules (See Remark~\ref{Remark-DimMatrix-CohomFinDimComplex}).

\begin{proposition} \label{Proposition-ChernChar-DimVect}
Let $A$ be a finite dimensional elementary algebra of finite global dimension, $\{e_1,\cdots,e_n\}$ a complete set of orthogonal primitive idempotents of $A$, and $M$ a bounded complex of finite dimensional projective right $A$-modules. Then
$$\mathrm{ch}(M) = (e_1,\cdots,e_n) \cdot C_A^{-1}\cdot \dv M$$
in $HH_0(A)=A/[A,A]=\bigoplus\limits_{i=1}^nke_i$.
\end{proposition}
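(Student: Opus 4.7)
The plan is to reduce the identity to the single known case $\mathrm{ch}(e_iA)=e_i$ (used already in the proof of Proposition~\ref{Proposition-Pairing-CartanMatrix}) by exploiting two properties of the Chern character map recorded before the statement: additivity on triangles, and $\mathrm{ch}(X[1])=-\mathrm{ch}(X)$.

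First I would split $M$ degree-by-degree using its brutal truncations $\sigma_{\ge l}M$. The short exact sequence of complexes
$$0 \longrightarrow \sigma_{\ge l+1}M \longrightarrow \sigma_{\ge l}M \longrightarrow M^l[-l] \longrightarrow 0$$
is termwise split (in each degree it has the form $0\to 0\to M^l\to M^l\to 0$, $0\to M^k\to M^k\to 0\to 0$, or the zero sequence), hence induces a triangle in $\mathcal{H}(\Per A)$. Because $M$ is bounded, a finite induction on its length, combined with triangle-additivity and $\mathrm{ch}(X[1])=-\mathrm{ch}(X)$, yields
$$\mathrm{ch}(M) \;=\; \sum_{l\in\Z}(-1)^l\,\mathrm{ch}(M^l).$$
This reduces the proposition to evaluating $\mathrm{ch}(P)$ for a single finite dimensional projective right $A$-module $P$.

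Next I would decompose $P\cong\bigoplus_{i=1}^n(e_iA)^{m_i}$ into its indecomposable projective summands and read off the multiplicities from the Cartan matrix. Since $(\dv e_iA)_j=\dim e_iAe_j=(C_A)_{ji}$, the column vector $\dv e_iA$ is exactly the $i$-th column of $C_A$, so
$$\dv P \;=\; C_A\cdot(m_1,\ldots,m_n)^T, \qquad\text{equivalently}\qquad (m_1,\ldots,m_n)^T \;=\; C_A^{-1}\cdot\dv P.$$
Combining the additivity of the Chern character on the direct-sum decomposition of $P$ with $\mathrm{ch}(e_iA)=e_i$, this gives $\mathrm{ch}(P)=(e_1,\ldots,e_n)\cdot C_A^{-1}\cdot\dv P$.

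Finally I would substitute this back into the alternating sum for $\mathrm{ch}(M)$ and pull the constant row $(e_1,\ldots,e_n)\cdot C_A^{-1}$ out of the sum; what remains is $\sum_{l\in\Z}(-1)^l\,\dv M^l = \dv M$, delivering the claimed formula. I do not foresee a genuine obstacle: the only nontrivial step is the reduction to a single projective module via the truncation triangles, and everything else is linear algebra organised around the Cartan matrix.
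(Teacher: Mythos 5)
Your proof is correct, and it is a genuinely different route from the one the paper takes. The paper fixes the unknown coordinates $a_1,\dots,a_n$ of $\mathrm{ch}(M)$ in the basis $e_1,\dots,e_n$ of $HH_0(A)$ (available by Proposition~\ref{Proposition-HH-GloDimFinAlg}), then pairs against each $\mathrm{ch}(e_iA)^\vee=e_i^\vee$ using Shklyarov's HRR formula (Theorem~\ref{Theorem-HRR-dga}) together with the explicit pairing matrix $C_A^T$ (Proposition~\ref{Proposition-Pairing-CartanMatrix}); this produces the linear system $\dv M = C_A\cdot[a_1,\dots,a_n]$ which is then inverted. You instead bypass Shklyarov's theorem and the pairing computation entirely: the termwise-split brutal-truncation sequences give triangles in $\mathcal{H}(\Per A)$, so by the recorded additivity and shift properties of $\mathrm{ch}$ you reduce to a single projective module, and there the decomposition $P\cong\bigoplus_i(e_iA)^{m_i}$ together with $\mathrm{ch}(e_iA)=e_i$ and $\dv e_iA = i$-th column of $C_A$ finishes the computation. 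Your argument is more elementary and shows the formula follows purely from the additive structure of the Chern character (plus the known value on the generators $e_iA$), without invoking the nondegeneracy of the Shklyarov pairing; the paper's approach, by contrast, makes the role of the pairing and its matrix $C_A^T$ explicit, which fits the authors' larger aim of comparing all the ingredients of the HRR package. One could view the two proofs as dual: yours expresses $[M]\in K_0$ in the projective basis and pushes forward along $\mathrm{ch}$, the paper's detects $\mathrm{ch}(M)$ by pairing against the same basis.
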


\begin{proof}
By Proposition~\ref{Proposition-HH-GloDimFinAlg}, we may assume
$\mathrm{ch}(M) = \sum\limits_{j=1}^na_je_j$ with $a_1,\cdots,a_n\in k$.
By Theorem~\ref{Theorem-HRR-dga} and Proposition~\ref{Proposition-Pairing-CartanMatrix}, we have
$(\dv M)_i = \dim\Hom_A(e_iA,M) = \langle \mathrm{ch}(M),\mathrm{ch}(e_iA)^\vee \rangle
= \langle \sum\limits_{j=1}^na_je_j,e_i^\vee \rangle = \sum\limits_{j=1}^n (C_A)_{ij}\cdot a_j$
for all $1\le i\le n$. So $\dv M = C_A\cdot[a_1,\cdots,a_n]$,
i.e., $[a_1,\cdots,a_n] = C_A^{-1} \cdot \dv M.$
Thus $\mathrm{ch}(M) = (e_1,\cdots,e_n) \cdot C_A^{-1}\cdot \dv M$.
\end{proof}

Proposition~\ref{Proposition-ChernChar-DimVect} implies that, under the canonical bases of free modules  of finite ranks,
the Chern character map $\mathrm{ch}: K_0(\mathcal{H}(\Per A))\to HH_0(A)$ is given by $C_A^{-1}$:
$$\xymatrix{
K_0(\mathcal{H}(\Per A)) \ar[r]^{\mathrm{ch}} \ar[d]^\cong & HH_0(A) \ar[d]^\cong & [M] \ar@{|->}[r] \ar@{|->}[d] & \mathrm{ch}(M) \ar@{|->}[d]\\
\Z^n \ar[r]^{\mathrm{ch}} & k^n & \dv M \ar@{|->}[r] & C_A^{-1}\cdot \dv M.}$$

\bigskip

\noindent{\bf Comparisons of the HRR type formulas.}
The cohomological and homological HRR type formulas on complex level in Theorem~\ref{Theorem-HRR-Complex} (1) and (2) are identities in $\Z$,
but Shklyarov's formulas in Theorem~\ref{Theorem-HRR-dga} and Proposition~\ref{Proposition-HRR-Tensor-dga} are identities in $k$.

{\it When both formulas are restricted to a finite dimensional elementary algebra $A$ of finite global dimension
over a field $k$ of characteristic zero,
Shklyarov's formula in Theorem~\ref{Theorem-HRR-dga} and the cohomological HRR type formula on complex level in Theorem~\ref{Theorem-HRR-Complex} (1) coincide:}
Without loss of generality (See Remark~\ref{Remark-HRR-Complex-CohFin}),
let $M$ and $N$ be two bounded complexes of finite dimensional projective right $A$-modules.
Shklyarov's formula in Theorem~\ref{Theorem-HRR-dga} is
$\mathrm{ch}(\Hom_{\mathrm{Per}A}(M,N)) = \langle \mathrm{ch}(N),\mathrm{ch}(M)^\vee \rangle.$
Its left hand side is equal to $\sum\limits_{l\in\Z}(-1)^l\ \dim\Ext^l_A(M,N)$.
By Proposition~\ref{Proposition-Pairing-CartanMatrix} and Proposition~\ref{Proposition-ChernChar-DimVect},
its right hand side
$$\begin{array}{ll}
\langle \mathrm{ch}(N),\mathrm{ch}(M)^\vee \rangle
& = (C_A^{-1} \cdot \dv N)^T \cdot C_A^T \cdot (C_A^{-1} \cdot \dv M) \\ [3mm]
& = (\dv N)^T \cdot C_A^{-1} \cdot \dv M = (\underline{\dim} M)^T \cdot C_A^{-T} \cdot \underline{\dim} N.
\end{array}$$
So $\sum\limits_{l\in\Z}(-1)^l\ \dim\Ext^l_A(M,N) = (\underline{\dim} M)^T \cdot C_A^{-T} \cdot \underline{\dim} N$ in $k$.
Due to $\ch k=0$, $\sum\limits_{l\in\Z}(-1)^l\ \dim\Ext^l_A(M,N) = (\dv M)^T \cdot C_A^{-T} \cdot \dv N$ holds in $\Z$.
This is just the cohomological HRR type formula on complex level in Theorem~\ref{Theorem-HRR-Complex} (1).

{\it When both formulas are restricted to a finite dimensional elementary algebra $A$ of finite global dimension
over a field $k$ of characteristic zero,
Shklyarov's formula in Proposition~\ref{Proposition-HRR-Tensor-dga} and
the homological HRR type formula on complex level in Theorem~\ref{Theorem-HRR-Complex} (2) coincide:}
Without loss of generality (See Remark~\ref{Remark-HRR-Complex-CohFin}),
let $M$ be a bounded complex of finite dimensional projective right $A$-modules
and $N$ a bounded complex of finite dimensional projective left $A$-modules.
Shklyarov's formula in Proposition~\ref{Proposition-HRR-Tensor-dga} is
$\mathrm{ch}(M\ot_AN) = \langle \mathrm{ch}(M),\mathrm{ch}(N) \rangle$.
Its left hand side is equal to $\sum\limits_{l\in\Z}(-1)^l\ \dim \Tor^A_l(M,N)$.
By Proposition~\ref{Proposition-Pairing-CartanMatrix}, Proposition~\ref{Proposition-ChernChar-DimVect} and Lemma~\ref{Lemma-CartanMatrix-AlgebraTensorOp} (2), its right hand side
$$\begin{array}{ll}
\langle \mathrm{ch}(M),\mathrm{ch}(N) \rangle
& = (C_A^{-1}\cdot \dv M)^T \cdot C_A^T \cdot (C_{A^\op}^{-1} \cdot \dv N_{A^\op}) \\ [3mm]
& = (C_A^{-1}\cdot \dv M)^T \cdot C_A^T \cdot (C_A^{-T} \cdot (\dv N)^T) \\ [3mm]
& = (\dv M)^T \cdot C_A^{-T} \cdot (\dv N)^T = \dv N \cdot C_A^{-1} \cdot \dv M.
\end{array}$$
This is just the homological HRR type formula on complex level in Theorem~\ref{Theorem-HRR-Complex} (2).

\section{Lefschetz type formulas for finite dimensional algebras}

In this section, we will give the Lefschetz type formulas for finite dimensional elementary algebras of finite global dimension
and compare them with the Lefschetz type formulas for dg algebras.

\subsection{Trace matrices}

In order to formulate the Lefschetz type formulas for finite dimensional elementary algebras of finite global dimension,
we introduce the trace vector of a module (complex) endomorphism and the trace matrix of a bimodule (complex) endomorphism
which are natural generalizations of the (super) trace of a $k$-vector space (complex) endomorphism.

\bigskip

\noindent{\bf Trace matrices of bimodules.}
Let $A$ be a finite dimensional elementary algebra and $\{e_1,\cdots,e_n\}$ a complete set of orthogonal primitive idempotents of $A$.
The {\it trace vector} of an endomorphism $\phi$ of a finite dimensional right $A$-module $M$ is the column vector $\tv\phi = \tv_M(\phi) := [\tr \phi_1,\cdots,\tr \phi_n] \in k^n$
where $\phi_i\in\End_k(Me_i)$ is the restriction of $\phi$ on $Me_i$ for all $1\le i\le n$.
The {\it trace vector} of an endomorphism $\psi$ of a finite dimensional left $A$-module $N$ is the row vector $\tv\psi = \tv_N(\psi) := (\tr \psi_1,\cdots,\tr \psi_n) \in k^n$
where $\psi_i\in\End_k(e_iN)$ is the restriction of $\psi$ to $e_iN$ for all $1\le i\le n$.

\begin{remark}{\rm
An endomorphism $\phi$ of a finite dimensional right $A$-module $M$ can be viewed naturally as
an endomorphism $\phi$ of a finite dimensional left $A^\op$-module $M$,
but $\tv_{M_A}(\phi)=(\tv_{_{A^\op}M}(\phi))^T$.
}\end{remark}

Let $A$ and $B$ be two finite dimensional elementary algebras, and $\{e_1,\cdots,e_n\}$ and $\{f_1,\cdots,f_m\}$ complete sets of orthogonal primitive idempotents of $A$ and $B$ respectively.
The {\it trace matrix} $\tm\phi=\tm_M(\phi)$ of an endomorphism $\phi$ of a finite dimensional $B$-$A$-bimodule $M$ is the $n\times m$ matrix $(\tr\phi_{ij}) \in k^{n\times m}$ where $\phi_{ij}\in\End_k(f_jMe_i)$ is the restriction of $\phi$ on $f_jMe_i$ for all $1\le i\le n$ and $1\le j\le m$.

\begin{remark}{\rm
An endomorphism $\phi$ of a finite dimensional right $A$-module $M$ can be viewed naturally as
an endomorphism $\phi$ of a finite dimensional $k$-$A$-bimodule $_kM_A$,
and $\tv_{M_A}(\phi) = \tm_{_kM_A}(\phi)$.
An endomorphism $\psi$ of a finite dimensional left $A$-module $N$ can be viewed naturally as
an endomorphism $\psi$ of a finite dimensional $A$-$k$-bimodule $_AN_k$,
and $\tv_{_AN}(\psi)=\tm_{_AN_k}(\psi)$.
In particular, for a $k$-linear operator $\phi$ on a finite dimensional $k$-vector space $M$,
$\tr_M(\phi)=\tm_{_kM_k}(\phi)$. So the trace matrix of an endomorphism of a finite dimensional bimodule
generalizes the trace of a linear operator on a finite dimensional vector space
and the trace vector of an endomorphism of a finite dimensional module.
}\end{remark}

An endomorphism $\phi$ of a finite dimensional $B$-$A$-bimodule $M$ can be viewed naturally as either
an endomorphism $\phi$ of a finite dimensional right $(B^\op\ot A)$-module $M_{B^\op\ot A}$ or
an endomorphism $\phi$ of a finite dimensional left $(B\ot A^\op)$-module $_{B\ot A^\op} M$.
The following lemma distinguishes $\tm_{_BM_A}(\phi)$, $\tv_{M_{B^\op\ot A}}(\phi)$ and $\tv_{_{B\ot A^\op}M}(\phi)$.

\begin{lemma} \label{Lemma-TraceVector-Bimod}
Let $A$ and $B$ be finite dimensional elementary algebras,
$\{e_1,\cdots,e_n\}$ and $\{f_1,\cdots,f_m\}$ complete sets of orthogonal primitive idempotents of $A$ and $B$ respectively,
and $\phi$ an endomorphism of a finite dimensional $B$-$A$-bimodule $M$. Then

{\rm (1)} $\tv_{M_{B^\op\ot A}}(\phi) = [\tv_{f_1M}(\phi_1),\cdots,\tv_{f_mM}(\phi_m)]$,
i.e., the column vectorization of $\tm_M(\phi)$,
where $\phi_j\in\End_A(f_jM)$ is the restriction of $\phi$ on $f_jM$ for all $1\le j\le m$.

{\rm (2)}  $\tv_{_{B\ot A^\op}M}(\phi) = (\tv_{M_{B^\op\ot A}}(\phi))^T$ is the row vectorization of $(\tm_M(\phi))^T$.
\end{lemma}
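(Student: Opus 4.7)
The plan is to prove both statements by unwinding the definitions against the canonical complete sets of orthogonal primitive idempotents of the tensor product algebras, in direct parallel to the proof of Lemma~\ref{Lemma-DimVector-Bimod}. The key observation is that, for the enveloping-type algebras $B^\op\ot A$ and $B\ot A^\op$, the canonical complete sets of orthogonal primitive idempotents are $\{f_j\ot e_i\mid 1\le j\le m,\ 1\le i\le n\}$, and these idempotents act on $M$ by cutting out the subspaces $f_jMe_i$, on which the induced $k$-linear restriction of $\phi$ is precisely the restriction $\phi_{ij}$ appearing in the definition of $\tm_M(\phi)$.

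For part (1), I would first fix an ordering of the idempotents $f_j\ot e_i$ (lexicographic in $(j,i)$) so that the $(n(j-1)+i)$-th component of $\tv_{M_{B^\op\ot A}}(\phi)$ is the trace of $\phi$ restricted to $M(f_j\ot e_i)=f_jMe_i$. By the definition of trace matrix, this equals $(\tm_M(\phi))_{ij}$. On the other hand, $\phi_j\in\End_A(f_jM)$ is the restriction of $\phi$ to the right $A$-module $f_jM$, so by definition the $i$-th entry of $\tv_{f_jM}(\phi_j)$ is $\tr(\phi_j|_{f_jMe_i})=\tr(\phi|_{f_jMe_i})=(\tm_M(\phi))_{ij}$. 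Stacking these column vectors $\tv_{f_jM}(\phi_j)$ side by side for $j=1,\ldots,m$ then produces exactly $\tv_{M_{B^\op\ot A}}(\phi)$, which is the column vectorization of $\tm_M(\phi)$.

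Part (2) follows from the same calculation with the roles of left and right interchanged: for the canonical idempotent $f_j\ot e_i$ of $B\ot A^\op$ acting on the left of $M$, one has $(f_j\ot e_i)M=f_jMe_i$, and the trace vector $\tv_{_{B\ot A^\op}M}(\phi)$ is a row vector whose $(n(j-1)+i)$-th component is again $\tr(\phi|_{f_jMe_i})=(\tm_M(\phi))_{ij}$. Comparing with (1) gives the transpose relation $\tv_{_{B\ot A^\op}M}(\phi)=(\tv_{M_{B^\op\ot A}}(\phi))^T$, which is the row vectorization of $(\tm_M(\phi))^T$.

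There is no real obstacle here; the content is purely bookkeeping. The only mildly delicate point is being consistent about the ordering of the basis $\{f_j\ot e_i\}$, matching the convention used in the proof of Lemma~\ref{Lemma-DimVector-Bimod}, so that ``column vectorization'' and ``row vectorization'' of $\tm_M(\phi)$ have unambiguous meanings compatible with the display in the lemma statement. Once that convention is fixed, the two identities are immediate from the definitions of $\tv$ and $\tm$.
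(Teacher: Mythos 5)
Your proof is correct and takes essentially the same route as the paper: unwind the definition of the trace vector against the canonical complete set of orthogonal primitive idempotents $\{f_j\otimes e_i\}$ of the tensor product algebra, ordered lexicographically, and observe that each component is a trace over $f_jMe_i$, matching the entries of $\tm_M(\phi)$. Part (2) is obtained in the paper simply by the standing relation between a right module over a ring and the corresponding left module over its opposite, which is the same transposition argument you spell out.
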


\begin{proof}
(1) With respect to the complete set $\{f_1\ot e_1,\cdots,f_1\ot e_n,\cdots,f_m\ot e_1,\linebreak \cdots,f_m\ot e_n\}$
of orthogonal primitive idempotent of $B^\op\ot A$, we have
$$\begin{array}{ll}
&\tv_{M_{B^\op\ot A}}(\phi) \\  [2mm]
= & [\tr_{M(f_1\ot e_1)}(\phi_{11}),\cdots,\tr_{M(f_1\ot e_n)}(\phi_{n1}),\cdots,\tr_{M(f_m\ot e_1)}(\phi_{1m}),\cdots,\tr_{M(f_m\ot e_n)}(\phi_{nm})] \\ [2mm]
= & [\tr_{f_1Me_1}(\phi_{11}),\cdots,\tr_{f_1Me_n}(\phi_{n1}),\cdots,\tr_{f_mMe_1}(\phi_{1m}),\cdots,\tr_{f_mMe_n}(\phi_{nm})] \\ [2mm]
= & [\tv_{f_1M}(\phi_1),\cdots,\tv_{f_mM}(\phi_m)].
\end{array}$$

(2) follows from (1).
\end{proof}

Now we observe some properties of trace matrices
whose proofs can be reduced to the corresponding properties of the traces of $k$-linear operators on finite dimensional $k$-vector spaces.

\begin{lemma} \label{Lemma-Property-Trace}
Let $A$ and $B$ be two finite dimensional elementary algebras,
and $\{e_1,\cdots, e_n\}$ and $\{f_1,\cdots,f_m\}$ complete sets of orthogonal primitive idempotents of $A$ and $B$ respectively. Then the following three statements hold:

{\rm (1)} For all endomorphisms $\phi$ and $\psi$ of a finite dimensional $B$-$A$-bimodule $M$, $\tm(\phi+\psi)=\tm\phi+\tm\psi$.

{\rm (2)} For all morphisms $\phi : M\to N$  and $\psi: N\to M$ between finite dimensional $B$-$A$-bimodules $M$ and $N$, $\tm_N(\phi\psi)=\tm_M(\psi\phi).$

{\rm (3)} For any finite dimensional $B$-$A$-bimodule $M$,
$\tm(\id_{_BM_A})=\dm {_BM_A}$ in $k^{n\times m}$.
\end{lemma}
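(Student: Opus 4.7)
The plan is to reduce each of the three assertions to the corresponding classical property of the trace of a $k$-linear operator on a finite dimensional $k$-vector space. The key preliminary observation is that, for any morphism $\phi$ of $B$-$A$-bimodules between $M$ and $N$, the sub-$k$-vector space $f_jMe_i$ is sent into $f_jNe_i$ for every $1\le i\le n$ and $1\le j\le m$, simply because $\phi$ is both left $B$-linear (so it commutes with multiplication by $f_j$) and right $A$-linear (so it commutes with multiplication by $e_i$). Hence the restriction $\phi_{ij}:f_jMe_i\to f_jNe_i$ is a well-defined $k$-linear map, and the entire proof reduces to entrywise computations on the $(i,j)$-block.

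For part (1), I would note that restriction to $f_jMe_i$ is a $k$-linear operation on $\End_k(f_jMe_i)$, so $(\phi+\psi)_{ij}=\phi_{ij}+\psi_{ij}$; applying the $k$-linearity of the usual trace then gives $(\tm(\phi+\psi))_{ij}=\tr\phi_{ij}+\tr\psi_{ij}=(\tm\phi)_{ij}+(\tm\psi)_{ij}$ for every $i,j$. For part (2), functoriality of the restriction gives $(\phi\psi)_{ij}=\phi_{ij}\psi_{ij}$ as maps on $f_jNe_i$, and $(\psi\phi)_{ij}=\psi_{ij}\phi_{ij}$ as maps on $f_jMe_i$. The classical cyclic invariance $\tr(\phi_{ij}\psi_{ij})=\tr(\psi_{ij}\phi_{ij})$ for $k$-linear maps between two finite dimensional $k$-vector spaces then yields $(\tm_N(\phi\psi))_{ij}=(\tm_M(\psi\phi))_{ij}$ entrywise. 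For part (3), $(\id_M)_{ij}=\id_{f_jMe_i}$, whose trace equals $\dim_k f_jMe_i=(\dm{_BM_A})_{ij}$ by the very definition of the dimension matrix.

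There is essentially no obstacle: the only thing that could go wrong is whether the restriction of a bimodule endomorphism to the corner $f_jMe_i$ is well-defined, and this is immediate from the two-sided linearity. Once this is observed, each of (1), (2), (3) is a one-line consequence of the corresponding fact about $\tr:\End_k(V)\to k$ applied blockwise.
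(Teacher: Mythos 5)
Your proposal is correct and follows essentially the same route as the paper: both reduce each statement entrywise to the classical properties (additivity, cyclic invariance, trace of identity) of the trace of $k$-linear maps on the corner spaces $f_jMe_i$, your only addition being the explicit (and correct) remark that bimodule morphisms restrict to well-defined maps $f_jMe_i\to f_jNe_i$, which the paper takes for granted.
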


\begin{proof} (1) We have $(\tm(\phi+\psi))_{ij} = \tr((\phi+\psi)_{ij}) = \tr(\phi_{ij}+\psi_{ij})
= \tr\phi_{ij}+\tr\psi_{ij} = (\tm\phi)_{ij}+(\tm\psi)_{ij} = (\tm\phi+\tm\psi)_{ij}$ for all $1\le i\le n$ and $1\le j\le m$.
So $\tm(\phi+\psi)=\tm\phi+\tm\psi$.

(2) We have $(\tm_N(\phi\psi))_{ij} = \tr_{f_jNe_i}((\phi\psi)_{ij}) = \tr_{f_jNe_i}(\phi_{ij}\psi_{ij}) = \linebreak \tr_{f_jMe_i}(\psi_{ij}\phi_{ij}) = \tr_{f_jMe_i}((\psi\phi)_{ij}) = (\tm_M(\psi\phi))_{ij}$ for all $1\le i\le n$ and $1\le j\le m$. So $\tm_N(\phi\psi)=\tm_M(\psi\phi).$

(3) We have $(\tm(\id_{_BM_A}))_{ij} = \tr((\id_{_BM_A})_{ij}) = \tr(\id_{f_jMe_i}) = \dim f_jMe_i = (\dm {_BM_A})_{ij}$ in $k$ for all $1\le i\le n$ and $1\le j\le m$. So $\tm(\id_{_BM_A})=\dm {_BM_A}$ in $k^{n\times m}$.
\end{proof}

We know that any linear operator and its dual operator have the same trace. More general, we have the following lemma:

\begin{lemma} \label{Lemma-TrMatrix-Dual}
Let $A$ and $B$ be finite dimensional elementary algebras,
$\{e_1,\cdots,e_n\}$ and $\{f_1,\cdots,f_m\}$ complete sets of orthogonal primitive idempotents of $A$ and $B$ respectively, $\phi$ an endomorphism of a finite dimensional $B$-$A$-bimodule $M$, and $\phi^*:=\Hom_k(\phi,k)$ the dual endomorphism of $\phi$.
Then $\tm_{M^*}(\phi^*)=(\tm_M(\phi))^T.$
\end{lemma}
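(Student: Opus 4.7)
The plan is to reduce the identity entry by entry to the classical fact that a $k$-linear operator on a finite dimensional space and its $k$-linear dual have the same trace, which is precisely the philosophy used in the preceding lemmas (in particular, in the proof of Lemma~\ref{Lemma-CartanMatrix-ModuleDual}).

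First I would fix the index bookkeeping. Since $M^*$ is an $A$-$B$-bimodule, its trace matrix $\tm_{M^*}(\phi^*)$ is an $m\times n$ matrix whose $(j,i)$-entry is $\tr(\phi^*)_{ji}$, where $(\phi^*)_{ji}$ is the restriction of $\phi^*$ to $e_iM^*f_j$. Our target reads $(\tm_{M^*}(\phi^*))_{ji}=(\tm_M(\phi))_{ij}$ for all $1\le i\le n$ and $1\le j\le m$.

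Next I would identify $e_iM^*f_j$ with $(f_jMe_i)^*$ as $k$-vector spaces via the canonical isomorphism $\alpha\mapsto \alpha|_{f_jMe_i}$, using the bimodule action $(e_i\cdot\alpha\cdot f_j)(m)=\alpha(f_j\,m\,e_i)$. Under this identification, I would check that $(\phi^*)_{ji}$ corresponds exactly to the $k$-linear dual $(\phi_{ij})^*$ of the restriction $\phi_{ij}\in\End_k(f_jMe_i)$; this is just the observation that $\phi^*(\alpha)=\alpha\circ\phi$, so restricting to $e_iM^*f_j\cong(f_jMe_i)^*$ amounts to precomposing with $\phi_{ij}$.

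Finally, invoking the classical equality $\tr(\psi^*)=\tr(\psi)$ for any $k$-linear operator $\psi$ on a finite dimensional $k$-vector space, I obtain
\[
(\tm_{M^*}(\phi^*))_{ji}=\tr((\phi^*)_{ji})=\tr((\phi_{ij})^*)=\tr(\phi_{ij})=(\tm_M(\phi))_{ij}=((\tm_M(\phi))^T)_{ji},
\]
which gives the desired identity $\tm_{M^*}(\phi^*)=(\tm_M(\phi))^T$. There is no real obstacle here; the only subtle point is keeping the indexing consistent with the convention that passing from a $B$-$A$-bimodule to its dual swaps left and right actions, and thus transposes the shape of the trace matrix. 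This is essentially the trace-version of Lemma~\ref{Lemma-CartanMatrix-ModuleDual} and reduces to it when $\phi=\id_M$ by Lemma~\ref{Lemma-Property-Trace} (3).
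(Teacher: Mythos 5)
Your proof is correct and takes essentially the same approach as the paper: identify $e_iM^*f_j$ with $(f_jMe_i)^*$, check that under this identification $(\phi^*)_{ji}$ becomes $(\phi_{ij})^*$, and invoke the classical equality $\tr(\psi^*)=\tr(\psi)$. The only cosmetic difference is that the paper builds the identification from a choice of dual bases and invokes Lemma~\ref{Lemma-Property-Trace}~(2) for conjugation invariance, whereas you describe the same isomorphism intrinsically as the restriction map.
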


\begin{proof} Take a $k$-basis of $f_jMe_i$ for all $1\le i\le n$ and $1\le j\le m$.
They form a $k$-basis of $M$. From the correspondence between dual bases,
we obtain a $k$-linear isomorphism $\xi : e_iM^*f_j \to (f_jMe_i)^*$ which satisfies $\xi\circ(\phi^*)_{ji}=(\phi_{ij})^*\circ\xi$, i.e., the following diagram is commutative:
$$\xymatrix{
e_iM^*f_j \ar[r]^-\xi \ar[d]^-{(\phi^*)_{ji}} & (f_jMe_i)^* \ar[d]^-{(\phi_{ij})^*} \\
e_iM^*f_j \ar[r]^-\xi & (f_jMe_i)^*.
}$$
By Lemma~\ref{Lemma-Property-Trace} (2), we have
$\tr(\phi^*)_{ji} = \tr(\xi^{-1}\circ (\phi_{ij})^*\circ\xi) = \tr(\phi_{ij})^*$. Furthermore,
$(\tm\phi^*)_{ji} = \tr(\phi^*)_{ji} = \tr(\phi_{ij})^* = \tr\phi_{ij} = (\tm \phi)_{ij}$
for all $1\le i\le n$ and $1\le j\le m$. So $\tm\phi^*=(\tm \phi)^T.$
\end{proof}

We know that the trace of the tensor product of two linear operators is the product of their traces. More general, we have the following lemma:

\begin{lemma} \label{Lemma-TrMatrix-TensorProduct}
Let $A$ and $B$ be finite dimensional elementary algebras,
$\{e_1,\cdots,e_n\}$ and $\{f_1,\cdots,f_m\}$ complete sets of orthogonal primitive idempotents of $A$ and $B$ respectively, $\phi$ an endomorphism of a finite dimensional right $A$-module $M$,
and $\psi$ an endomorphism of a finite dimensional left $B$-module $N$.
Then \linebreak $\tm_{N\ot M}(\psi\ot\phi)=\tv_M(\phi) \cdot \tv_N(\psi).$
\end{lemma}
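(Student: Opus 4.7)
The plan is to verify the matrix identity entrywise, exactly in the style of the preceding Lemmas~\ref{Lemma-TrMatrix-Dual} and \ref{Lemma-TrMatrix-TensorProduct}'s companion (the dimension version in Lemma~\ref{Lemma-CartanMatrix-ModuleTensor}). Fix $1\le i\le n$ and $1\le j\le m$. The key structural observation is that the primitive idempotents of $B\ot A^{\op}$ act on $N\ot M$ compatibly with the tensor decomposition: $f_j(N\ot M)e_i = f_jN\ot Me_i$, and under this identification the restriction $(\psi\ot\phi)_{ij}$ of $\psi\ot\phi$ to $f_j(N\ot M)e_i$ equals $\psi_j\ot\phi_i$, where $\psi_j\in\End_k(f_jN)$ and $\phi_i\in\End_k(Me_i)$ are the restrictions appearing in the definitions of $\tv_N(\psi)$ and $\tv_M(\phi)$.

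Next I would invoke the classical tensor-trace identity for $k$-linear operators on finite dimensional vector spaces, namely $\tr(\psi_j\ot\phi_i)=\tr(\psi_j)\cdot\tr(\phi_i)$. This yields
\[
(\tm_{N\ot M}(\psi\ot\phi))_{ij} = \tr_{f_jN\ot Me_i}(\psi_j\ot\phi_i) = \tr(\phi_i)\cdot\tr(\psi_j) = (\tv_M(\phi))_i\cdot(\tv_N(\psi))_j.
\]
Finally, since $\tv_M(\phi)\in k^n$ is by convention a column vector of length $n$ and $\tv_N(\psi)\in k^n$ is a row vector of length $m$ (here reading $n$ and $m$ from the respective module structures), the outer product $\tv_M(\phi)\cdot\tv_N(\psi)$ is the $n\times m$ matrix whose $(i,j)$-entry is precisely $(\tv_M(\phi))_i\cdot(\tv_N(\psi))_j$. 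Comparing the two expressions entrywise gives the desired identity $\tm_{N\ot M}(\psi\ot\phi)=\tv_M(\phi)\cdot\tv_N(\psi)$.

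There is no substantive obstacle; the proof is a direct computation. The only mild point requiring attention is the bookkeeping of the bimodule structure under tensor product, i.e., verifying that the idempotent $f_j\ot e_i$ of $B\ot A^{\op}$ cuts out $f_jN\ot Me_i$ from $N\ot M$ and that $\psi\ot\phi$ respects this decomposition component-wise. Once this is made explicit, the statement reduces immediately to the classical tensor-product formula for traces of linear operators, in complete analogy with how Lemma~\ref{Lemma-CartanMatrix-ModuleTensor} reduces to the multiplicativity of dimensions under tensor product.
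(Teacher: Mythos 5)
Your proof is correct and is essentially the same entrywise computation as the paper's: both identify $f_j(N\ot M)e_i$ with $f_jN\ot Me_i$, note $(\psi\ot\phi)_{ij}=\psi_j\ot\phi_i$, and then invoke the classical multiplicativity of trace under tensor product. (Minor slip: you wrote $\tv_N(\psi)\in k^n$ where it should be $k^m$, but your parenthetical remark makes clear you intended the right thing.)
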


\begin{proof} We have
$(\tm_{N\ot M}(\psi\ot\phi))_{ij} = \tr_{f_j(N\ot M)e_i}((\psi\ot\phi)_{ij}) = \tr_{f_jN\ot Me_i}(\psi_j\ot\phi_i)
= \tr_{Me_i}(\phi_i)\cdot\tr_{f_jN}(\psi_j) = (\tv_M(\phi) \cdot \tv_N(\psi))_{ij}$
for all $1\le i\le n$ and $1\le j\le m$. So $\tm_{N\ot M}(\psi\ot\phi)=\tv_M(\phi) \cdot \tv_N(\psi).$
\end{proof}

\bigskip

\noindent{\bf Trace matrices of complexes.}
Let $\phi$ be a chain endomorphism of a bounded complex $M$ of finite dimensional $k$-vector spaces.
The {\it (super) trace} of $\phi$ is $\tr \phi = \tr_M(\phi) := \sum\limits_{l\in\Z}(-1)^l\ \tr_{M^l}(\phi^l) \in k$.
Let $A$ be a finite dimensional elementary algebra,
and $\{e_1,\cdots,e_n\}$ a complete set of orthogonal primitive idempotents of $A$.
The {\it (super) trace vector} of a chain endomorphism $\phi$ of a bounded complex $M$ of finite dimensional right (resp. left) $A$-modules is the column (resp. row) vector $\tv\phi = \tv_M(\phi) := \sum\limits_{l\in\Z}(-1)^l\ \tv_{M^l}(\phi^l) \in k^n$.
Let $B$ be also a finite dimensional elementary algebra,
and $\{f_1,\cdots, f_m\}$ a complete set of orthogonal primitive idempotents of $B$.
The {\it (super) trace matrix} of a chain endomorphism $\phi$ of a bounded complex $M$ of finite dimensional $B$-$A$-bimodules is $\tm\phi = \tm_M(\phi):= \sum\limits_{l\in\Z}(-1)^l\ \tm_{M^l}(\phi^l) \in k^{n\times m}$.

Recall that the {\it endomorphism category} $\End(\mathcal{C})$ of a category $\mathcal{C}$ is the category
whose objects are all pairs $(C,\phi)$ where $C$ is an object in $\mathcal{C}$ and $\phi\in\End_{\mathcal{C}}(C)$,
whose morphisms $\xi : (C,\phi)\to (C',\phi')$ are all morphisms $\xi\in\Hom_{\mathcal{C}}(C,C')$ satisfying $\xi\phi=\phi'\xi$,
and the composition of two morphisms is just their composition in $\mathcal{C}$ (See, for example, \cite{Len69}).
The following lemma implies that $\tm$ is an additive invariant on the endomorphism category $\End(B\mbox{\rm -mod-}A)$
of the category $B\mbox{\rm -mod-}A$ of finite dimensional $B$-$A$-bimodules.

\begin{lemma} \label{Lemma-TraceMatrix-qis}
Let $A$ and $B$ be two finite dimensional elementary algebras,
and $\{e_1,\cdots,e_n\}$ and $\{f_1,\cdots,f_m\}$ complete sets of orthogonal primitive idempotents of $A$ and $B$ respectively.
Then the following two statements hold:

{\rm (1)} For any short exact sequence $0\to (M',\phi')\xrightarrow{\lambda} (M,\phi)\xrightarrow{\rho} (M'',\phi'')\to 0$
in $\End(B\mbox{\rm -mod-}A)$, i.e., the commutative diagram
$$\xymatrix{
0\ar[r] & M' \ar[r]^{\lambda} \ar[d]^{\phi'} & M \ar[r]^{\rho} \ar[d]^{\phi} & M''\ar[r] \ar[d]^{\phi''} & 0 \\
0\ar[r] & M' \ar[r]^{\lambda} & M \ar[r]^{\rho} & M'' \ar[r] & 0
}$$
in $B\mbox{\rm -mod-}A$ with exact rows, $\tm \phi=\tm \phi' +\tm \phi''$.
So $\tm : K_0(\End(B\mbox{\rm -mod-}A)) \to k^{n\times m}, [(M,\phi)] \mapsto \tm \phi$, is a group homomorphism. In particular, $\tm$ is invariant under the isomorphisms in $\End(B\mbox{\rm -mod-}A)$.

{\rm (2)} For any chain endomorphism $\phi$ of a bounded complex $M$ of finite dimensional $B$-$A$-bimodules,
$$\tm_M(\phi) := \sum\limits_{l\in\Z}(-1)^l\ \tm_{M^l}(\phi^l) = \sum\limits_{l\in\Z}(-1)^l\ \tm_{H^l(M)}(H^l(\phi)).$$
\end{lemma}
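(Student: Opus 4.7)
The plan is to parallel the proof of Lemma~\ref{Lemma-DimMatrix-qis}, working entrywise and reducing everything to additivity of the ordinary trace over $k$.

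\medskip

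\emph{Part (1).} I would apply the exact functor $f_jB\ot_B-\ot_AAe_i : B\mbox{\rm -mod-}A \to \mod k$ to the given commutative diagram with exact rows. Since this functor is additive and respects morphisms, it produces a short exact sequence
$$0 \to f_jM'e_i \xrightarrow{\lambda_{ij}} f_jMe_i \xrightarrow{\rho_{ij}} f_jM''e_i \to 0$$
of finite dimensional $k$-vector spaces together with compatible $k$-linear endomorphisms $\phi'_{ij}, \phi_{ij}, \phi''_{ij}$. The classical additivity of trace for short exact sequences of finite dimensional vector spaces with endomorphisms (proved by lifting a basis of the kernel through a chosen splitting so that $\phi_{ij}$ acquires a block upper-triangular matrix with diagonal blocks representing $\phi'_{ij}$ and $\phi''_{ij}$) yields $\tr\phi_{ij} = \tr\phi'_{ij} + \tr\phi''_{ij}$. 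Ranging over all $i,j$ gives $\tm\phi = \tm\phi' + \tm\phi''$, from which the group-homomorphism statement and invariance under isomorphisms in $\End(B\mbox{\rm -mod-}A)$ follow formally.

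\medskip

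\emph{Part (2).} I would reuse the cocycle-coboundary decomposition already exploited in the proof of Lemma~\ref{Lemma-DimMatrix-qis}(2). Because $\phi$ is a chain map, $d\phi = \phi d$ forces $\phi$ to restrict to endomorphisms of $Z^l(M)$ and $B^l(M)$ and to descend to endomorphisms $H^l(\phi)$ on $H^l(M)$. Consequently, the two natural short exact sequences
$$0 \to B^l(M) \to Z^l(M) \to H^l(M) \to 0, \qquad 0 \to Z^l(M) \to M^l \to B^{l+1}(M) \to 0$$
lift to short exact sequences in $\End(B\mbox{\rm -mod-}A)$. Part (1) then gives
$$\tm_{M^l}(\phi^l) = \tm_{B^l(M)}(\phi|_{B^l(M)}) + \tm_{H^l(M)}(H^l(\phi)) + \tm_{B^{l+1}(M)}(\phi|_{B^{l+1}(M)}).$$
Multiplying by $(-1)^l$ and summing over $l\in\Z$, the contributions from $B^\bullet(M)$ telescope to zero (boundedness of $M$ ensures only finitely many summands are nonzero, so no convergence issue arises), leaving the desired identity $\sum_{l\in\Z}(-1)^l\,\tm_{M^l}(\phi^l) = \sum_{l\in\Z}(-1)^l\,\tm_{H^l(M)}(H^l(\phi))$.

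\medskip

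The only real point of care is checking that $\phi$ genuinely restricts to the sub- and quotient-bimodules appearing in the two short exact sequences of part (2); this is the sole place where the chain-endomorphism hypothesis $d\phi = \phi d$ is used, and it is routine. The rest is bookkeeping on top of the standard additivity of the trace on finite dimensional $k$-vector spaces, so no genuine obstacle arises.
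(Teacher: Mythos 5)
Your proposal is correct and follows essentially the same route as the paper: apply the exact functor $f_jB\ot_B-\ot_AAe_i$ entrywise to reduce part (1) to additivity of the ordinary trace on short exact sequences of finite dimensional $k$-vector spaces, and then in part (2) use the two short exact sequences built from $B^l(M)$, $Z^l(M)$ and $H^l(M)$ in $\End(B\mbox{\rm -mod-}A)$ together with part (1) and the alternating-sum telescoping. Your explicit remark that the chain condition $d\phi=\phi d$ is what makes $\phi$ restrict to cocycles and coboundaries and descend to cohomology is exactly the (implicit) justification in the paper's argument, so nothing is missing.
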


\begin{proof}
(1) Acting the exact functor $f_jB\ot_B-\ot_AAe_i : B\mbox{\rm -mod-}A \to \mod k$ on the given short exact sequence in $\End(B\mbox{\rm -mod-}A)$, we obtain a short exact sequence in the endomorphism category $\End(\mbox{\rm mod}k)$ of $\mod k$:
$$\xymatrix{
0\ar[r] & f_jX'e_i \ar[r] \ar[d]^{\phi'_{ij}} & f_jXe_i \ar[r] \ar[d]^{\phi_{ij}} & f_jX''e_i \ar[r] \ar[d]^{\phi''_{ij}} & 0 \\
0\ar[r] & f_jX'e_i \ar[r] & f_jXe_i \ar[r] & f_jX''e_i \ar[r] & 0.
}$$
Thus $\tr\phi_{ij}=\tr\phi'_{ij}+\tr\phi''_{ij}$ for all $1\le i\le n, 1\le j\le m$.
So $\tm\phi=\tm\phi'+\tm\phi''$.

(2) Let $Z^l(M)$ and $B^l(M)$ be the $l$-cocycle and $l$-coboundary of $M$ respectively.
Then we have two short exact sequences
$$\xymatrix{
0\ar[r] & B^l(M) \ar[r] \ar[d]^{B^l(\phi)} & Z^l(M) \ar[r] \ar[d]^{Z^l(\phi)} & H^l(M) \ar[r] \ar[d]^{H^l(\phi)} & 0 \\
0\ar[r] & B^l(M) \ar[r] & Z^l(M) \ar[r] & H^l(M) \ar[r] & 0
}$$
and
$$\xymatrix{
0\ar[r] & Z^l(M) \ar[r] \ar[d]^{Z^l(\phi)} & M^l \ar[r] \ar[d]^{\phi^l} & B^{l+1}(M) \ar[r] \ar[d]^{B^{l+1}(\phi)} & 0 \\
0\ar[r] & Z^l(M) \ar[r] & M^l \ar[r] & B^{l+1}(M) \ar[r] & 0
}$$
in the endomorphism category $\End(B\mbox{\rm -mod-}A)$,
where $B^l(\phi)$ (resp. $Z^l(\phi)$) is the restriction of $\phi^l$ on $B^l(M)$ (resp. $Z^l(M)$) and $H^l(\phi)$ is induced by $\phi^l$ for all $l\in\Z$.
By (1), we have $\tm Z^l(\phi) = \tm B^l(\phi)+\tm H^l(\phi)$
and $\tm \phi^l= \tm Z^l(\phi) + \tm B^{l+1}(\phi)$.
So $\tm \phi^l= \tm B^l(\phi) + \tm B^{l+1}(\phi)+\tm H^l(\phi)$ for all $l\in\Z$.
Thus $\tm \phi := \sum\limits_{l\in\Z}(-1)^l\ \tm \phi^l
= \sum\limits_{l\in\Z}(-1)^l\ \tm H^l(\phi).$
\end{proof}

\begin{remark} \label{Remark-TraceMatrix-CohomFinComplex} {\rm
More general, for a chain endomorphism $\phi$ of a cohomologically finite dimensional complex $M$ of $B$-$A$-bimodules,
we can define its {\it (super) trace matrix}
$\tm \phi = \tm_M(\phi) := \sum\limits_{l\in\Z}(-1)^l\ \tm_{H^l(M)}(H^l(\phi))$.
Due to Lemma~\ref{Lemma-TraceMatrix-qis} (2), this definition extends that for a chain endomorphism $\phi$ of a bounded complex $M$ of finite dimensional $B$-$A$-bimodules.
A morphism $\xi:(M,\phi)\to(N,\psi)$ in the endomorphism category $\End(\mathcal{C}(B\mbox{\rm -Mod-}A))$ of the category
$\mathcal{C}(B\mbox{\rm -Mod-}A)$ of complexes of $B$-$A$-bimodules is a {\it quasi-isomorphism} if $\xi:M\to N$ is a quasi-isomorphism in $\mathcal{C}(B\mbox{\rm -Mod-}A)$. Obviously, trace matrix is invariant under the quasi-isomorphisms in $\End(\mathcal{C}(B\mbox{\rm -Mod-}A))$.
}\end{remark}

\subsection{Lefschetz type formulas for finite dimensional algebras}

In this subsection, using trace vectors and trace matrices,
we will give the Lefschetz type formulas on module, bimodule, module complex and bimodule complex four levels
for finite dimensional elementary algebras of finite global dimension.

\bigskip

\noindent{\bf Lefschetz type formulas on module level.}
The following theorem gives cohomological, homological, Hochschild cohomological and Hochschild homological
four versions of Lefschetz type formulas on module level for finite dimensional elementary algebras of finite global dimension.

\begin{theorem} \label{Theorem-Lefschetz-fda}
Let $A$ be a finite dimensional elementary algebra of finite global dimension,
and $\{e_1,\cdots,e_n\}$ a complete set of orthogonal primitive idempotents of $A$.
Then the following four equivalent statements hold:

{\rm (1)} For all endomorphism $\phi$ of a finite dimensional right $A$-module $M$
and endomorphism $\psi$ of a finite dimensional right $A$-module $N$,
$$\tr(\RHom_A(\phi,\psi)) := \sum\limits_{l\ge 0}(-1)^l\ \tr(\Ext^l_A(\phi,\psi))
= \langle \tv \phi,\tv \psi \rangle_A$$
where $\langle -,- \rangle_A : k^n\times k^n\to k, (x,y)\mapsto x^T\cdot C_A^{-T}\cdot y$.

{\rm (2)} For all endomorphism $\phi$ of a finite dimensional right $A$-module $M$
and endomorphism $\psi$ of a finite dimensional left $A$-module $N$,
$$\tr(\phi\ot^L_A\psi) := \sum\limits_{l\ge 0}(-1)^l\ \tr(\Tor^A_l(\phi,\psi))
= \langle (\tv \psi)^T,\tv \phi \rangle_{A^\op}$$
where $\langle -,- \rangle_{A^\op} : k^n\times k^n\to k, (x,y)\mapsto x^T\cdot C_A^{-1}\cdot y$.

{\rm (3)} For any endomorphism $\phi$ of a finite dimensional $A$-bimodule $M$,
$$\tr(\RHom_{A^e}(A,\phi)) := \sum\limits_{l\ge 0}(-1)^l\ \tr(HH^l(A,\phi)) = \tr(C_A^{-T} \cdot \tm \phi).$$

{\rm (4)} For any endomorphism $\phi$ of a finite dimensional $A$-bimodule $M$,
$$\tr(A\ot^L_{A^e}\phi) := \sum\limits_{l\ge 0}(-1)^l\ \tr(HH_l(A,\phi)) = \tr(C_A^{-1} \cdot\tm \phi).$$
\end{theorem}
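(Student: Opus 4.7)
The plan is to mirror the proof strategy of Theorem~\ref{Theorem-HRR-fda}: first establish statement (1) directly, then verify that the four statements are pairwise equivalent via the chain $(1) \Rightarrow (3) \Rightarrow (4) \Rightarrow (2) \Rightarrow (1)$, using the same natural isomorphisms of derived functors as in the HRR case but with the trace matrix identities replacing the dimension matrix identities throughout.

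For statement (1), I would take a finite projective resolution $P^\bullet \to M$ (which exists since $A$ has finite global dimension) and lift $\phi$ to a chain endomorphism $\tilde\phi$ of $P^\bullet$. Applying Lemma~\ref{Lemma-TraceMatrix-qis}(2) to the complex $\Hom_A(P^\bullet, N)$ of finite dimensional $k$-vector spaces yields $\sum_{l\ge 0}(-1)^l \tr(\Ext^l_A(\phi,\psi)) = \sum_l (-1)^l \tr(\Hom_A(\tilde\phi^l, \psi))$. Since both sides of the claimed formula are additive in $M$ under direct sums, it suffices to verify (1) for indecomposable projectives $M = e_iA$. Here $\End_A(e_iA) \cong e_iAe_i$ is a local finite dimensional $k$-algebra with residue field $k$ (because $A$ is elementary), so decomposes as $ke_i \oplus \rad(e_iAe_i)$; any endomorphism $\phi$ corresponds to some $a = \lambda e_i + r$ with $r$ in the nilpotent radical. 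Left multiplication by $r$ on each $e_iAe_k$ is nilpotent and hence contributes zero to $\tv(\phi)$, while the induced endomorphism $n \mapsto \psi(n)r$ on $\Hom_A(e_iA, N) \cong Ne_i$ is likewise nilpotent (since $\psi|_{Ne_i}$ is $e_iAe_i$-linear and commutes with right multiplication by $r$), hence contributes zero to $\tr(\Hom_A(\phi,\psi))$. The problem therefore reduces to $\phi = \lambda\cdot\id$, where Lemma~\ref{Lemma-Property-Trace}(3) identifies the Lefschetz identity with $\lambda$ times Ringel's formula, i.e., Theorem~\ref{Theorem-HRR-fda}(1).

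The four equivalences then proceed exactly as in the proof of Theorem~\ref{Theorem-HRR-fda}. For $(1) \Rightarrow (3)$, apply (1) over the enveloping algebra $A^e$, use Lemma~\ref{Lemma-CartanMatrix-AlgebraTensorOp}(3) to expand $C_{A^e} = C_A^T \ot C_A$, and invoke Lemma~\ref{Lemma-TraceVector-Bimod} to translate between the bimodule trace matrix and the $A^e$-module trace vector; the resulting $2\times 2$ block trace computation collapses to $\tr(C_A^{-T}\cdot \tm\phi)$ exactly as in the HRR case. For $(3) \Rightarrow (4)$, use the natural quasi-isomorphism $(A\ot^L_{A^e}M)^* \simeq \RHom_{A^e}(A,M^*)$ to identify $HH_l(A,\phi)$ with $HH^l(A,\phi^*)$, then apply Lemma~\ref{Lemma-TrMatrix-Dual}. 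For $(4) \Rightarrow (2)$, combine $\Tor^A_l(M,N) \cong HH_l(A, N\ot M)$ (the induced endomorphism being $\psi\ot\phi$) with Lemma~\ref{Lemma-TrMatrix-TensorProduct}. For $(2) \Rightarrow (1)$, use $\Ext^l_A(M,N) \cong \Tor^A_l(M,N^*)$ and Lemma~\ref{Lemma-TrMatrix-Dual} once more.

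The main obstacle is the direct verification of (1) for indecomposable projective $M = e_iA$. The crucial structural input is the local nature of $e_iAe_i$, which forces the nilpotent contributions to vanish simultaneously on both sides and collapses the verification to the scalar case; without this reduction, one would need to track how an arbitrary element of $e_iAe_i$ interacts with the right $e_iAe_i$-module structure on $Ne_i$, which has no clean expression in terms of the trace vectors alone. The remaining equivalences are purely formal transcriptions of the HRR chain, with each dimension-matrix lemma replaced by its trace-matrix analogue.
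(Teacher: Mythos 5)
Your proof is correct, and the chain of equivalences $(1)\Rightarrow(3)\Rightarrow(4)\Rightarrow(2)\Rightarrow(1)$ is exactly the paper's, with the same translations via Lemma~\ref{Lemma-TraceVector-Bimod}, Lemma~\ref{Lemma-CartanMatrix-AlgebraTensorOp}, Lemma~\ref{Lemma-TrMatrix-Dual} and Lemma~\ref{Lemma-TrMatrix-TensorProduct}. Where you diverge is in the choice of base case and the method used to establish it. The paper proves statement (2) directly: it takes a \emph{minimal} projective resolution $Q_\bullet$ of the $A$-bimodule $A$, invokes Happel's multiplicity formula $t_{lij}=\dim\Ext_A^l(S_i,S_j)$ from \cite[Lemma 1.5]{Hap89} for the terms $Q_l=\bigoplus(Ae_i\otimes e_jA)^{t_{lij}}$, tensors by $M\otimes_A-\otimes_AN$, applies Lemma~\ref{Lemma-TrMatrix-TensorProduct} to the resulting factors $Me_i\otimes e_jN$, and finally uses Ringel's formula (Theorem~\ref{Theorem-HRR-fda}(1)) to collapse the alternating sum of multiplicities into $(\dv S_i)^TC_A^{-T}\dv S_j$. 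You instead prove statement (1) directly by resolving $M$ (not $A$), reducing to an endomorphism of an indecomposable projective $P=e_iA$ by additivity of traces over diagonal blocks, and then exploiting that $\End_A(e_iA)\cong e_iAe_i$ is local with residue field $k$: the radical part of the endomorphism is nilpotent simultaneously on $e_iA$ (killing its contribution to $\tv\phi$) and on $\Hom_A(e_iA,N)\cong Ne_i$ (killing its contribution to $\tr\Hom_A(\phi,\psi)$), reducing to the scalar case. One small but worth noting conceptual slip: once $\phi=\lambda\id_{e_iA}$, the two sides of (1) equal $\lambda(\tv\psi)_i$ by the purely formal identity $(\dv e_iA)^TC_A^{-T}=e_i^T$ (the definition of $C_A$ and the invertibility coming from finite global dimension), not by an application of Ringel's formula as you wrote; Lemma~\ref{Lemma-Property-Trace}(3) replaces $\tv\id$ by $\dv e_iA$, but the resulting scalar identity is immediate. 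The upshot is that your base-case argument does not pass through Happel's lemma nor through Theorem~\ref{Theorem-HRR-fda}(1) at all, which makes it arguably more elementary; the price is that you must spell out the additivity-over-diagonal-blocks reduction and the two nilpotency observations carefully, whereas the paper trades those local computations for the known structure of minimal bimodule resolutions.
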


\begin{remark}{\rm

(1) In Theorem~\ref{Theorem-Lefschetz-fda},
$\RHom_A(\phi,\psi)$, $\phi\ot^L_A\psi$, $\RHom_{A^e}(A,\phi)$ and $A\ot^L_{A^e}\phi$
are endomorphisms of $\RHom_A(M,N)$, $M\ot^L_AN$, $\RHom_{A^e}(A,M)$ and $A\ot^L_{A^e}M$
in the derived category $\mathcal{D}k$ of $k$ respectively.
Moreover, $\Ext^l_A(\phi,\psi)$, $\Tor^A_l(\phi,\psi)$, $HH^l(A,\phi)$ and $HH_l(A,\phi)$
are $k$-linear operators on $k$-vector spaces $\Ext^l_A(M,N)$, $\Tor^A_l(M,N)$, $HH^l(A,M)$ and $HH_l(A,M)$ respectively.
The leftmost terms of the Lefschetz type formulas on module level in Theorem~\ref{Theorem-Lefschetz-fda}
are just the (super) traces or Hattori-Stallings traces $\tr_{\RHom_A(M,N)}(\RHom_A(\phi,\psi))$, $\tr_{M\ot^L_AN}(\phi\ot^L_A\psi)$, $\tr_{\RHom_{A^e}(A,M)}(\RHom_{A^e}(A,\phi))$
and $\tr_{A\ot^L_{A^e}M}(A\ot^L_{A^e}\phi)$ (See Remark~\ref{Remark-TraceMatrix-CohomFinComplex} and \S \ref{Subsection-Lefschetz-dga}).

(2) The four Lefschetz type formulas in Theorem~\ref{Theorem-Lefschetz-fda} are identities in $k$.
In the case of $A=k$, they are
$\tr(\Hom_k(\phi,\psi)) =\tr\phi\cdot\tr\psi$, $\tr(\phi\ot\psi)=\tr\phi\cdot\tr\psi$,
$\tr(\Hom_k(k,\phi)) =\tr\phi$ and $\tr(k\ot_k\phi)=\tr\phi$
for $k$-linear operators $\phi$ and $\psi$ on finite dimensional $k$-vector spaces.

(3) We do have the identities $\tr(\RHom_A(\phi,\psi)) =  \langle\tv\psi,\tv\phi\rangle_{A^\op}$
and $\tr(\phi\ot^L_A\psi) = \langle \tv\phi,(\tv\psi)^T\rangle_A$.
Nonetheless, they are not so natural due to Theorem~\ref{Theorem-Lefschetz-Bimodule}.

(4) In the case of $\ch k=0$, Theorem~\ref{Theorem-Lefschetz-fda} implies Theorem~\ref{Theorem-HRR-fda}.
For this, it is enough to take all $\phi$ and $\psi$ in Theorem~\ref{Theorem-Lefschetz-fda} to be identity morphisms and apply Lemma~\ref{Lemma-Property-Trace} (3).
}\end{remark}

\begin{proof} First of all, we show that (2) holds.
Denote $S_i:=e_iA/\rad(e_iA), 1\le i\le n$.
Then $\{S_1,\cdots,S_n\}$ is a complete set of representatives of isomorphism classes of simple right $A$-modules,
and $\dv S_1=[1,0,\cdots,0],\cdots,\dv S_n=[0,\cdots,0,1]$.
Since $A$ is a finite dimensional elementary algebra of finite global dimension,
$A^e=A^\op\ot A$ is also a finite dimensional elementary algebra of finite global dimension
and $\{e_i\ot e_j\ |\ 1\le i,j\le n\}$ is a complete set of orthogonal primitive idempotents in $A^e$.
So $\{(e_i\ot e_j)A^e\cong Ae_i\ot e_jA\ |\ 1\le i,j\le n\}$ is a complete set of representatives of isomorphism classes of indecomposable projective $A$-bimodules.
Let $Q_d \rightarrowtail Q_{d-1} \to \cdots \to Q_1 \to Q_0 (\twoheadrightarrow A)$ be a minimal projective resolution of the $A$-bimodule $A$ with $Q_l=\bigoplus\limits_{1\le i,j\le n} (Ae_i \ot e_jA)^{t_{lij}}$ for all $0\le l\le d$.
By \cite[Lemma 1.5]{Hap89}, we have $t_{lij}=\dim\Ext^l_A(S_i,S_j)$ for all $0\le l\le d$ and $1\le i,j\le n$.
Moreover, it follows from Theorem~\ref{Theorem-HRR-fda} (1) that
$$\sum\limits_{l=0}^d (-1)^l\ t_{lij}=(\underline{\dim}S_i)^T \cdot C_A^{-T} \cdot \underline{\dim}S_j.$$
Since $M \ot_A Q_\bullet$ is a projective resolution of the right $A$-module $M$,
we have $\Tor^A_l(M,N)\cong H_l(M \ot_A Q_\bullet \ot_A N)$ for all $0\le l\le d$.
By Lemma~\ref{Lemma-TraceMatrix-qis} (2), we have
$$\begin{array}{ll}
  & \sum\limits_{l\ge 0} (-1)^l\ \tr(\Tor^A_l(\phi,\psi)) \\ [4mm]
\stackrel{\rm L}{=} & \sum\limits_{l=0}^d (-1)^l\
\tr_{M \ot_A Q_l \ot_A N}(\phi\ot \id_{Q_l} \ot\psi) \\ [4mm]
= & \sum\limits_{l=0}^d (-1)^l\sum\limits_{1\le i,j\le n} t_{lij}\cdot
\tr_{M \ot_A (Ae_i \ot e_jA) \ot_A N}(\phi\ot \id_{Ae_i \ot e_jA} \ot\psi) \\ [4mm]
= & \sum\limits_{l=0}^d (-1)^l\sum\limits_{1\le i,j\le n} t_{lij}\cdot \tr_{Me_i \ot e_jN}(\phi_i \ot \psi_j) \\ [4mm]
= & \sum\limits_{l=0}^d (-1)^l\sum\limits_{1\le i,j\le n} t_{lij}\cdot \tr\phi_i\cdot\tr\psi_j \\ [6mm]
= & \sum\limits_{1\le i,j\le n} \tr\phi_i \cdot (\dv S_i)^T \cdot C_A^{-T} \cdot \dv S_j \cdot \tr\psi_j \\ [4mm]
= & (\tr\phi_1,\cdots,\tr\phi_n) \cdot C_A^{-T} \cdot [\tr\psi_1,\cdots,\tr\psi_n] \\ [4mm]
= & (\tv\phi)^T \cdot C_A^{-T} \cdot (\tv\psi)^T  = \tv\psi \cdot C_A^{-1} \cdot \tv\phi.
\end{array}$$

Next we show that the four statements (1), (2), (3) and (4) are equivalent:

(1)$\Rightarrow$(3): By (1), Lemma~\ref{Lemma-Property-Trace} (3), Lemma~\ref{Lemma-DimVector-Bimod} (1), Lemma~\ref{Lemma-CartanMatrix-AlgebraTensorOp} (3) and Lemma~\ref{Lemma-TraceVector-Bimod}, we have
$$\begin{array}{ll}
& \sum\limits_{l\ge 0} (-1)^l\ \tr(HH^l(A,\phi)) \\ [4mm]
= & \sum\limits_{l\ge 0} (-1)^l\ \tr(\Ext_{A^e}^l(A,\phi)) \\ [4mm]
\stackrel{(1)}{=} & (\tv\ \id_{A_{A^e}})^T \cdot C_{A^e}^{-T} \cdot \tv \phi  \\ [4mm]
\stackrel{\rm L}{=} & (\dv A_{A^e})^T \cdot C_{A^e}^{-T} \cdot \tv \phi \\ [4mm]
\stackrel{\rm 3L}{=} & \begin{pmatrix} \dv e_1A \\ \vdots \\ \dv e_nA \end{pmatrix}^T \cdot (C_A^{-1} \ot C_A^{-T}) \cdot
\begin{pmatrix} \tv_{e_1M}(\phi_1)\\ \vdots \\ \tv_{e_nM}(\phi_n) \end{pmatrix} \\ [10mm]
= & \begin{pmatrix} \dv e_1A\\ \vdots \\ \dv e_nA \end{pmatrix}^T \cdot
\begin{pmatrix} (C_A^{-1})_{11}\cdot C_A^{-T} & \cdots & (C_A^{-1})_{1n}\cdot C_A^{-T}\\
\vdots & \ddots & \vdots\\ (C_A^{-1})_{n1}\cdot C_A^{-T} & \cdots & (C_A^{-1})_{nn}\cdot C_A^{-T} \end{pmatrix} \cdot
\begin{pmatrix} \tv_{e_1M}(\phi_1) \\ \vdots \\ \tv_{e_nM}(\phi_n) \end{pmatrix} \\ [10mm]
= & \sum\limits_{1\le i,j\le n} (\dv e_iA)^T\cdot (C_A^{-1})_{ij}\cdot C_A^{-T} \cdot \tv_{e_jM}(\phi_j) \\ [4mm]
\stackrel{(1)}{=} & \sum\limits_{1\le i,j\le n} (C_A^{-1})_{ij} \cdot \tr(\Hom_A(e_iA,\phi_j)) \\ [5mm]
= & \sum\limits_{1\le i,j\le n} (C_A^{-1})_{ij} \cdot (\tm\phi)_{ij} \\ [5mm]
= & \tr(C_A^{-T} \cdot \tm\phi).
\end{array}$$

(3)$\Rightarrow$(4):
Note that
$$\begin{array}{ll}
\tr(HH_l(A,\phi)) & = \tr(H_l(A\ot_{A^e}^L\phi)) = \tr(H^l((A\ot_{A^e}^L\phi)^*)) \\ [2mm]
& = \tr(H^l(\RHom_{A^e}(A,\phi^*))) = \tr(HH^l(A,\phi^*))
\end{array}$$
for all $l\in\Z$. By (3) and Lemma~\ref{Lemma-TrMatrix-Dual}, we have
$$\begin{array}{ll}
\sum\limits_{l\ge 0} (-1)^l\ \tr(HH_l(A,\phi))& = \sum\limits_{l\ge 0} (-1)^l\ \tr(HH^l(A,\phi^*)) \stackrel{(3)}{=} \tr(C_A^{-T}\cdot\tm\phi^*) \\ [4mm]
& \stackrel{\rm L}{=} \tr(C_A^{-T}\cdot(\tm\phi)^T) = \tr(\tm\phi \cdot C_A^{-1}) = \tr(C_A^{-1} \cdot \tm\phi).
\end{array}$$

(4)$\Rightarrow$(2):
Note that
$$\begin{array}{ll}
\tr(\Tor^A_l(\phi,\psi)) & = \tr(H_l(\phi\ot^L_A\psi)) = \tr(H_l(A\ot^L_{A^e}(\psi\ot\phi))) \\ [2mm]
& = \tr(HH_l(A,\psi\ot\phi))
\end{array}$$
for all $l\in\Z$. By (4) and Lemma~\ref{Lemma-TrMatrix-TensorProduct}, we have
$$\begin{array}{ll}
& \sum\limits_{l\ge 0} (-1)^l\ \tr(\Tor^A_l(\phi,\psi)) \\ [4mm]
= & \sum\limits_{l\ge 0} (-1)^l\ \tr(HH_l(A,\psi\ot\phi))
\stackrel{(4)}{=} \tr(C_A^{-1}\cdot \tm({\psi\ot\phi}))
\stackrel{\rm L}{=} \tr(C_A^{-1} \cdot \tv \phi \cdot \tv \psi) \\ [4mm]
= & \tr(\tv \psi \cdot C_A^{-1} \cdot \tv \phi)
= \tv \psi \cdot C_A^{-1} \cdot \tv \phi = \langle (\tv \psi)^T,\tv \phi \rangle_{A^\op}.
\end{array}$$

(2)$\Rightarrow$(1):
Note that
$$\begin{array}{ll}
\tr(\Ext_A^l(\phi,\psi)) & = \tr(H^l(\RHom_A(\phi,\psi))) = \tr(H^l((\phi\ot^L_A\psi^*)^*)) \\ [2mm]
& = \tr(H_l(\phi\ot^L_A\psi^*)) = \tr(\Tor^A_l(\phi,\psi^*))
\end{array}$$
for all $l\in\Z$.
By (2) and Lemma~\ref{Lemma-TrMatrix-Dual},
we have
$$\begin{array}{ll}
\sum\limits_{l\ge 0} (-1)^l\ \tr(\Ext_A^l(\phi,\psi))
& = \sum\limits_{l\ge 0} (-1)^l\ \tr(\Tor^A_l(\phi,\psi^*))
\stackrel{(2)}{=} \tv\psi^* \cdot C_A^{-1}\cdot \tv\phi \\ [4mm]
& \stackrel{\rm L}{=} (\tv\psi)^T \cdot C_A^{-1}\cdot \tv\phi
= (\tv \phi)^T\cdot C_A^{-T}\cdot \tv \psi.
\end{array}$$

Therefore, the four statements (1), (2), (3) and (4) are equivalent.
\end{proof}

\bigskip

\noindent{\bf Lefschetz type formulas on bimodule level.}
The following theorem gives the cohomological and homological Lefschetz type formulas on bimodule level for finite dimensional elementary algebras of finite global dimension,
which generalizes at first sight but is essentially equivalent to Theorem~\ref{Theorem-Lefschetz-fda}.

\begin{theorem} \label{Theorem-Lefschetz-Bimodule}
Let $A,B$ and $C$ be three finite dimensional elementary algebras, $A$ of finite global dimension,
and $\{e_1,\cdots, e_n\}$, $\{f_1,\cdots,f_m\}$ and $\{g_1,\cdots,g_p\}$ complete sets of orthogonal primitive idempotents of $A$, $B$ and $C$ respectively.
Then the following two equivalent statements hold:

{\rm (1)} For all endomorphism $\phi$ of finite dimensional $B$-$A$-bimodule $M$ and
 endomorphism $\psi$ of finite dimensional $C$-$A$-bimodule $N$,
$$\tm(\RHom_A(\phi,\psi)) := \sum\limits_{l\ge 0}(-1)^l\ \tm(\Ext^l_A(\phi,\psi))
= \langle \tm \phi,\tm \psi \rangle_A$$
where $\langle -,- \rangle_A : k^{n\times m} \times k^{n\times p} \to k^{m\times p},
(x,y)\mapsto x^T\cdot C_A^{-T}\cdot y$.

{\rm (2)} For all endomorphism $\phi$ of finite dimensional $B$-$A$-bimodule $M$ and
endomorphism $\psi$ of finite dimensional $A$-$C$-bimodule $N$,
$$\tm(\phi\ot^L_A\psi) := \sum\limits_{l\ge 0}(-1)^l\ \tm(\Tor^A_l(\phi,\psi))
= \langle (\tm \psi)^T,\tm\phi \rangle_{A^\op}$$
where $\langle -,- \rangle_{A^\op} : k^{n\times p} \times k^{n\times m} \to k^{p\times m},
(x,y)\mapsto x^T\cdot C_A^{-1}\cdot y$.
\end{theorem}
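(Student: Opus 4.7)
The plan is to establish each of (1) and (2) by showing it is equivalent to the corresponding statement of Theorem~\ref{Theorem-Lefschetz-fda}, following exactly the template of the proof of Theorem~\ref{Theorem-HRR-Bimod}. For necessity in both parts one specializes $B=k=C$, upon which $\tm$ collapses to $\tv$ and the statements of Theorem~\ref{Theorem-Lefschetz-Bimodule} reduce verbatim to those of Theorem~\ref{Theorem-Lefschetz-fda}. All real content therefore lies in the sufficiency direction.

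For sufficiency of (1), I would reduce the identity entry-by-entry: for fixed $1\le i\le m$ and $1\le j\le p$ it suffices to show
$$\sum\limits_{l\ge 0}(-1)^l\,(\tm \Ext^l_A(\phi,\psi))_{ij} = ((\tm\phi)^T\cdot C_A^{-T}\cdot \tm\psi)_{ij}.$$
Pick a projective resolution $P_M$ of the $B$-$A$-bimodule $M$ and lift $\phi$ to a chain endomorphism $\widetilde{\phi}$ of $P_M$. Since $f_iB$ is a projective right $B$-module, $f_iB\otimes_B P_M$ is a projective resolution of $f_iM$, and $f_iB\otimes_B \widetilde{\phi}$ is a chain map lifting the restriction $\phi_i\in\End_A(f_iM)$. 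Applying the exact functor $g_jC\otimes_C(-)\otimes_B Bf_i$ to $\Hom_A(P_M,N)$ equipped with the endomorphism $\Hom_A(\widetilde{\phi},\psi)$ yields a complex naturally isomorphic to $\Hom_A(f_iB\otimes_B P_M,\, g_jC\otimes_C N)$ equipped with $\Hom_A(f_iB\otimes_B\widetilde{\phi},\, \psi_j)$. Passing to cohomology gives natural isomorphisms $g_j\Ext^l_A(M,N)f_i\cong \Ext^l_A(f_iM,g_jN)$ intertwining $\Ext^l_A(\phi,\psi)$ with $\Ext^l_A(\phi_i,\psi_j)$, so their $k$-linear traces agree. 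Invoking Theorem~\ref{Theorem-Lefschetz-fda}~(1) on the pair $(\phi_i,\psi_j)$ gives $\sum_l (-1)^l\,\tr\Ext^l_A(\phi_i,\psi_j) = (\tv\phi_i)^T\cdot C_A^{-T}\cdot \tv\psi_j$, and it remains only to observe that $\tv\phi_i$ is precisely the $i$-th column of $\tm\phi$ (likewise for $\tv\psi_j$ and $\tm\psi$), which assembles into the desired matrix entry.

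The proof of (2) follows the same blueprint, now using the exact functor $f_jB\otimes_B(-)\otimes_C Cg_i$ to produce natural isomorphisms $f_j\Tor^A_l(M,N)g_i \cong \Tor^A_l(f_jM, Ng_i)$ intertwining the induced endomorphisms, after which one invokes Theorem~\ref{Theorem-Lefschetz-fda}~(2) on $\phi_j$ and $\psi_i$. The main (and essentially only) subtlety I expect to face is the naturality step: one must verify that the isomorphism identifying $g_j\Ext^l_A(M,N)f_i$ with $\Ext^l_A(f_iM,g_jN)$ (and its Tor analogue) is compatible with the induced chain endomorphisms, which follows from the functoriality of Ext and Tor in both arguments together with the exactness of the idempotent truncation functors on bimodule categories. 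Once this is in place, traces transfer across the isomorphism by Lemma~\ref{Lemma-Property-Trace}~(2), and the rest is the routine matrix bookkeeping already carried out for the dimension version in Theorem~\ref{Theorem-HRR-Bimod}.
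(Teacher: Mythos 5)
Your proposal matches the paper's proof essentially step for step: reduce to Theorem~\ref{Theorem-Lefschetz-fda} via entrywise comparison, lift $\phi$ to a chain endomorphism of a projective bimodule resolution $P_M$, apply the exact idempotent-truncation functor to get intertwining isomorphisms $(g_j\Ext^l_A(M,N)f_i,\,(\Ext^l_A(\phi,\psi))_{ij}) \cong (\Ext^l_A(f_iM,g_jN),\,\Ext^l_A(\phi_i,\psi_j))$ in $\End(\mod k)$, transfer traces, and invoke the module-level formula; necessity is the $B=k=C$ specialization. The only cosmetic difference is that you cite Lemma~\ref{Lemma-Property-Trace}~(2) for trace invariance across isomorphisms where the paper uses Lemma~\ref{Lemma-TraceMatrix-qis}~(1); both are adequate.
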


\begin{proof}
By Theorem~\ref{Theorem-Lefschetz-fda}, it is enough to show that Theorem~\ref{Theorem-Lefschetz-Bimodule} (1) (resp. (2)) holds if and only if
so does Theorem~\ref{Theorem-Lefschetz-fda} (1) (resp. (2)).
We just prove that Theorem~\ref{Theorem-Lefschetz-Bimodule} (1) holds if and only if so does Theorem~\ref{Theorem-Lefschetz-fda} (1).
Similar for the other.

{\it Sufficiency.} It suffices to prove
$$\sum\limits_{l\ge 0}(-1)^l\ (\tm(\Ext^l_A(\phi,\psi)))_{ij}
= ((\tm \phi)^T \cdot C_A^{-T} \cdot \tm \psi)_{ij}$$
for all $1\le i\le m$ and $1\le j\le p$.
For this, let $P_M (\stackrel{\varepsilon}{\twoheadrightarrow} M)$ be any projective resolution of the $B$-$A$-bimodule $M$.
Then the endomorphism $\phi$ of the $B$-$A$-bimodule $M$ can be lifted to a chain endomorphism $\tilde{\phi}$ of $P_M$ such that the following diagram is commutative in the category of complexes of $B$-$A$-bimodules:
$$\xymatrix{
P_M \ar@{->>}[r]^\varepsilon \ar[d]^-{\tilde{\phi}} & M \ar[d]^{\phi} \\
P_M \ar@{->>}[r]^\varepsilon & M. }$$
Since $f_iB$ is a projective right $B$-module, $f_iB\ot_B P_M$ is a projective resolution of the right $A$-module $f_iM$,
and the chain endomorphism $f_iB\ot_B \tilde{\phi}$ of $f_iB\ot_B P_M$ is a lift of the endomorphism $\phi_i$ of $f_iM$ such that the following diagram is commutative in the category of complexes of right $A$-modules:
$$\xymatrix{
f_iB\ot_B P_M \ar@{->>}[r] \ar[d]^-{f_iB\ot_B\tilde{\phi}} & f_iM \ar[d]^{\phi_i} \\
f_iB\ot_B P_M \ar@{->>}[r] & f_iM. }$$
Since $g_jC\ot_C-\ot_BBf_i: C\mbox{\rm -Mod-}B \to \Mod k$ is an exact functor, we have a series of isomorphisms
$$\begin{array}{ll}
& (g_j\Ext_A^l(M,N)f_i,\ (\Ext^l_A(\phi,\psi))_{ij}) \\ [2mm]
\cong & (g_jC\ot_CH^l(\Hom_A(P_M,N))\ot_BBf_i,\ g_jC\ot_CH^l(\Hom_A(\tilde{\phi},\psi))\ot_BBf_i) \\ [2mm]
\cong & (H^l(g_jC\ot_C\Hom_A(P_M,N)\ot_BBf_i),\ H^l(g_jC\ot_C\Hom_A(\tilde{\phi},\psi)\ot_BBf_i)) \\ [2mm]
\cong & (H^l(\Hom_A(f_i B\ot_B P_M,g_jC\ot_C N)),\ H^l(\Hom_A(f_i B\ot_B \tilde{\phi},g_jC\ot_C \psi))) \\ [2mm]
\cong & (\Ext_A^l(f_i M,g_j N),\ \Ext_A^l(\phi_i,\psi_j))
\end{array}$$
in the endomorphism category $\End(\mod k)$ of $\mod k$ for all $l\in\mathbb{N}, 1\le i\le m$ and $1\le j\le p$.
By Lemma~\ref{Lemma-TraceMatrix-qis} (1) and Theorem~\ref{Theorem-Lefschetz-fda} (1), we have
$$\begin{array}{ll}
& \sum\limits_{l\ge 0}(-1)^l\ (\tm(\Ext^l_A(\phi,\psi)))_{ij} \\ [4mm]
= & \sum\limits_{l\ge 0}(-1)^l\ \tr_{g_j\Ext_A^l(M,N)f_i} ((\Ext^l_A(\phi,\psi))_{ij}) \\ [4mm]
\stackrel{\rm L}{=} & \sum\limits_{l\ge 0}(-1)^l\ \tr_{g_jC\ot_CH^l(\Hom_A(P_M,N))\ot_BBf_i} (g_jC\ot_CH^l(\Hom_A(\tilde{\phi},\psi))\ot_BBf_i) \\ [4mm]
\stackrel{\rm L}{=} & \sum\limits_{l\ge 0}(-1)^l\ \tr_{H^l(g_jC\ot_C\Hom_A(P_M,N)\ot_BBf_i)} (H^l(g_jC\ot_C\Hom_A(\tilde{\phi},\psi)\ot_BBf_i)) \\ [4mm]
\stackrel{\rm L}{=} & \sum\limits_{l\ge 0}(-1)^l\ \tr_{H^l(\Hom_A(f_i B\ot_B P_M,g_jC\ot_C N))} (H^l(\Hom_A(f_i B\ot_B \tilde{\phi},g_jC\ot_C \psi))) \\ [4mm]
\stackrel{\rm L}{=} & \sum\limits_{l\ge 0}(-1)^l\ \tr (\Ext_A^l(\phi_i,\psi_j)) \\ [4mm]
\stackrel{\rm T}{=} & (\tv \phi_i)^T\cdot C_A^{-T}\cdot \tv \psi_j = ((\tm \phi)^T \cdot C_A^{-T} \cdot \tm \psi)_{ij}
\end{array}$$
for all $1\le i\le m$ and $1\le j\le p$.

{\it Necessity.} Take $B=k=C$ in Theorem~\ref{Theorem-Lefschetz-Bimodule} (1),
we obtain Theorem~\ref{Theorem-Lefschetz-fda} (1).
\end{proof}

\bigskip

\noindent{\bf Lefschetz type formulas on complex level.}
The following theorem gives cohomological, homological, Hochschild cohomological and Hochschild homological four versions of Lefschetz type formulas on complex level for finite dimensional elementary algebras of finite global dimension,
which generalizes at first glance but is essentially equivalent to Theorem~\ref{Theorem-Lefschetz-fda}.

\begin{theorem} \label{Theorem-Lefschetz-Complex}
Let $A$ be a finite dimensional elementary algebra of finite global dimension,
and $\{e_1,\cdots,e_n\}$ a complete set of orthogonal primitive idempotents of $A$.
Then the following four equivalent statements hold:

{\rm (1)} For all chain endomorphism $\phi$ of a bounded complex $M$ of finite dimensional right $A$-modules
and chain endomorphism $\psi$ of a bounded complex $N$ of finite dimensional right $A$-modules,
$$\tr(\RHom_A(\phi,\psi)) := \sum\limits_{l\in\Z}(-1)^l\ \tr(\Ext^l_A(\phi,\psi))
= \langle \tv \phi,\tv \psi \rangle_A$$
where $\langle -,- \rangle_A : k^n\times k^n\to k, (x,y)\mapsto x^T\cdot C_A^{-T}\cdot y$.

{\rm (2)} For all chain endomorphism $\phi$ of a bounded complex $M$ of finite dimensional right $A$-modules
and chain endomorphism $\psi$ of a bounded complex $N$ of finite dimensional left $A$-modules,
$$\tr(\phi\ot^L_A\psi) := \sum\limits_{l\in\Z}(-1)^l\ \tr(\Tor^A_l(\phi,\psi))
= \langle (\tv \psi)^T,\tv \phi \rangle_{A^\op}$$
where $\langle -,- \rangle_{A^\op} : k^n\times k^n\to k, (x,y)\mapsto x^T\cdot C_A^{-1}\cdot y$.

{\rm (3)} For any chain endomorphism $\phi$ of a bounded complex $M$ of finite dimensional $A$-bimodules,
$$\tr(\RHom_{A^e}(A,\phi)) := \sum\limits_{l\in\Z}(-1)^l\ \tr(HH^l(A,\phi))
= \tr(C_A^{-T} \cdot \tm \phi).$$

{\rm (4)} For any chain endomorphism $\phi$ of a bounded complex $M$ of finite dimensional $A$-bimodules,
$$\tr(A\ot^L_{A^e}\phi) := \sum\limits_{l\in\Z}(-1)^l\ \tr(HH_l(A,\phi)) = \tr(C_A^{-1} \cdot\tm \phi).$$
\end{theorem}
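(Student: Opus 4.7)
The plan is to mirror the proof of Theorem~\ref{Theorem-HRR-Complex}, showing that each of the four statements (1)--(4) in Theorem~\ref{Theorem-Lefschetz-Complex} is equivalent to the corresponding statement (1)--(4) in the module-level Theorem~\ref{Theorem-Lefschetz-fda}. Since the four module-level statements are already known to be equivalent, this will establish the equivalence of the four complex-level statements, and at the same time show they hold.

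For the \emph{necessity} direction in each of (1)--(4) I would simply specialize the bounded complexes $M$ and $N$ to complexes concentrated in degree zero, whereupon the chain endomorphisms reduce to ordinary module endomorphisms and the alternating sums over $l\in\Z$ reduce to sums over $l\geq 0$, directly recovering Theorem~\ref{Theorem-Lefschetz-fda}. For \emph{sufficiency} I would use Remark~\ref{Remark-HRR-Complex-CohFin} to replace $M$ by a quasi-isomorphic bounded complex of finite dimensional projective right $A$-modules for (1) and (2), of finite dimensional injective $A$-bimodules for (3), and of finite dimensional projective $A$-bimodules for (4); in each case the chain endomorphism $\phi$ lifts to a chain endomorphism of the resolution, uniquely up to chain homotopy, and by Lemma~\ref{Lemma-TraceMatrix-qis} (2) together with Remark~\ref{Remark-TraceMatrix-CohomFinComplex} all relevant trace matrices are unchanged. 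Then Lemma~\ref{Lemma-TraceMatrix-qis} (2) expands the left-hand side as a double alternating sum over the components; for example in (1) this gives
\[
\sum_{l\in\Z}(-1)^l\,\tr(\Ext^l_A(\phi,\psi)) \;=\; \tr(\Hom_A(\phi,\psi)) \;=\; \sum_{i,j\in\Z}(-1)^{i+j}\,\tr(\Hom_A(\phi^i,\psi^j)),
\]
and analogously for (2), (3), (4) with $\phi\ot_A\psi$, $\Hom_{A^e}(A,\phi)$, $A\ot_{A^e}\phi$ in place of $\Hom_A(\phi,\psi)$. The right-hand side expands in the same index pattern via bilinearity of $\langle-,-\rangle_A$ (respectively linearity of $\tr(C_A^{-T}\cdot(-))$ and $\tr(C_A^{-1}\cdot(-))$) combined with $\tv\phi=\sum_i(-1)^i\,\tv\phi^i$ and $\tm\phi=\sum_i(-1)^i\,\tm\phi^i$. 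Termwise application of Theorem~\ref{Theorem-Lefschetz-fda} then finishes the argument, since each $M^i$ is projective (or injective as appropriate) so the higher $\Ext^l$, $\Tor^A_l$, $HH^l$, $HH_l$ vanish and only the degree-zero term survives at the module level.

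The main technical obstacle will be ensuring that the lift of $\phi$ to the chosen resolution is well-behaved on the level of trace matrices: one needs that homotopic chain endomorphisms induce the same maps on cohomology and hence have the same trace matrix, which is precisely the content of Lemma~\ref{Lemma-TraceMatrix-qis} (2) and Remark~\ref{Remark-TraceMatrix-CohomFinComplex}. Once that is in hand, the computations parallel those in the proof of Theorem~\ref{Theorem-HRR-Complex}, with dimension matrices/vectors systematically replaced by trace matrices/vectors, and no new conceptual input is required.
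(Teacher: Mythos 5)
Your proposal follows essentially the same route as the paper: reduce each statement of Theorem~\ref{Theorem-Lefschetz-Complex} to the corresponding module-level statement of Theorem~\ref{Theorem-Lefschetz-fda}, get necessity by taking complexes concentrated in degree zero, and get sufficiency by replacing $M$ with a quasi-isomorphic bounded complex of projectives (resp.\ injectives), lifting $\phi$ to the resolution, invoking Lemma~\ref{Lemma-TraceMatrix-qis} to show the trace vector/matrix is unchanged, expanding both sides as double alternating sums, and applying Theorem~\ref{Theorem-Lefschetz-fda} termwise. The only slight imprecision is attributing the invariance of the trace to ``homotopic endomorphisms induce the same map on cohomology''; what the paper actually uses is that the lift $\tilde{\phi}$ is conjugate to $\phi$ on cohomology via $H^l(\varepsilon)$, so Lemma~\ref{Lemma-TraceMatrix-qis}~(1) gives $\tv(H^l(\phi))=\tv(H^l(\tilde{\phi}))$ and then part~(2) gives $\tv\phi=\tv\tilde{\phi}$ --- but this is a presentational nuance, not a gap.
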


\begin{remark} \label{Remark-Lefschetz-Complex-CohFin} {\rm
By Remark~\ref{Remark-HRR-Complex-CohFin} and Remark~\ref{Remark-TraceMatrix-CohomFinComplex}, we may freely replace
``bounded complex of finite dimensional left (resp. right) modules'' in Theorem~\ref{Theorem-Lefschetz-Complex} with
``bounded complex of finite dimensional projective (injective) left (resp. right) modules''
or ``cohomologically finite dimensional complex of left (resp. right) modules''.
}\end{remark}

\begin{proof}
By Theorem~\ref{Theorem-Lefschetz-fda}, it is enough to show that Theorem~\ref{Theorem-Lefschetz-Complex} (1) (resp. (2), (3) and (4)) holds if and only if
so does Theorem~\ref{Theorem-Lefschetz-fda} (1) (resp. (2), (3) and (4)).
We just prove that Theorem~\ref{Theorem-Lefschetz-Complex} (1) holds if and only if so does Theorem~\ref{Theorem-Lefschetz-fda} (1).
Similar for the others.

{\it Sufficiency.} Since $A$ is a finite dimensional elementary algebra of finite global dimension,
for any bounded complex $M$ of finite dimensional right $A$-modules,
there exist a bounded complex $P_M$ of finite dimensional projective right $A$-modules
and a quasi-isomorphism $\varepsilon : P_M\to M$.
Moreover, the chain endomorphism $\phi$ of $M$ can be lifted to
a chain endomorphism $\tilde{\phi}$ of $P_M$ such that $\phi\varepsilon=\varepsilon\tilde{\phi}$.
$$\xymatrix{
P_M \ar[r]^\varepsilon \ar[d]_{\tilde{\phi}} & M \ar[d]^\phi & H^l(P_M) \ar[r]^{H^l(\varepsilon)} \ar[d]_{H^l(\tilde{\phi})} & H^l(M) \ar[d]^{H^l(\phi)} \\
P_M \ar[r]^\varepsilon & M & H^l(P_M) \ar[r]^{H^l(\varepsilon)} & H^l(M)
}$$
Then $(H^l(P_M),H^l(\tilde{\phi}))\cong (H^l(M),H^l(\phi))$ in the endomorphism category \linebreak $\End(\mod k)$ for all $l\in\Z$.
By Lemma~\ref{Lemma-TraceMatrix-qis} (1), we have $\tv(H^l(\phi)) = \tv(H^l(\tilde{\phi}))$ for all $l\in\Z$.
It follows from Lemma~\ref{Lemma-TraceMatrix-qis} (2) that $\tv\phi=\tv\tilde{\phi}$.
Thus we may assume that $M$ itself is a bounded complex of finite dimensional projective right $A$-modules.
By Lemma~\ref{Lemma-TraceMatrix-qis} (2) again, we have
$$\sum\limits_{l\in\Z} (-1)^l\ \tr(\Ext_A^l(\phi,\psi))
=\tr(\Hom_A(\phi,\psi)) = \sum\limits_{i,j\in\Z} (-1)^{j-i}\ \tr(\Hom_A(\phi^i,\psi^j)).$$
On the other hand, we have
$$\langle \tv\phi,\tv\psi \rangle_A
= \langle \sum\limits_{i\in\Z}(-1)^i\ \tv \phi^i , \sum\limits_{j\in\Z}(-1)^j\ \tv \psi^j \rangle_A
= \sum\limits_{i,j\in\Z}(-1)^{i+j}\ \langle \tv\phi^i , \tv\psi^j \rangle_A.$$
Now it suffices to show $\tr(\Hom_A(\phi^i,\psi^j))= \langle \tv\phi^i,\tv\psi^j \rangle_A$ for all $i,j\in\Z$.
This is obvious by Theorem~\ref{Theorem-Lefschetz-fda} (1), since $M^i$ is a finite dimensional projective right $A$-module.

{\it Necessity.} It is clear.
\end{proof}

\bigskip

\noindent{\bf Lefschetz type formulas on bimodule complex level.}
The following result gives the cohomological and homological Lefschetz type formulas on bimodule complex level for finite dimensional elementary algebras of finite global dimension,
which generalizes at first sight but is essentially equivalent to Theorem~\ref{Theorem-Lefschetz-Complex}.

\begin{theorem} \label{Theorem-Lefschetz-BiModComplex}
Let $A,B$ and $C$ be three finite dimensional elementary algebras, $A$ of finite global dimension,
and $\{e_1,\cdots, e_n\}$, $\{f_1,\cdots,f_m\}$ and $\{g_1,\cdots,g_p\}$ complete sets of orthogonal primitive idempotents of $A$, $B$ and $C$ respectively.
Then the following two equivalent statements hold:

{\rm (1)} For all chain endomorphism $\phi$ of a bounded complex $M$ of finite dimensional $B$-$A$-bimodules
and chain endomorphism $\psi$ of a bounded complex $N$ of finite dimensional $C$-$A$-bimodules,
$$\tm(\RHom_A(\phi,\psi)) := \sum\limits_{l\in\Z}(-1)^l\ \tm(\Ext^l_A(\phi,\psi))
= \langle \tm \phi,\tm \psi \rangle_A$$
where $\langle -,- \rangle_A : k^{n\times m} \times k^{n\times p} \to k^{m\times p},
(x,y)\mapsto x^T\cdot C_A^{-T}\cdot y$.

{\rm (2)} For all chain endomorphism $\phi$ of a bounded complex $M$ of finite dimensional $B$-$A$-bimodules
and chain endomorphism $\psi$ of a bounded complex $N$ of finite dimensional $A$-$C$-bimodules,
$$\tm(\phi\ot^L_A\psi) := \sum\limits_{l\in\Z}(-1)^l\ \tm(\Tor^A_l(\phi,\psi))
= \langle (\tm \psi)^T,\tm \phi \rangle_{A^\op}$$
where $\langle -,- \rangle_{A^\op} : k^{n\times p} \times k^{n\times m} \to k^{p \times m},
(x,y)\mapsto x^T\cdot C_A^{-1}\cdot y$.
\end{theorem}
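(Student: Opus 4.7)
The plan is to reduce Theorem~\ref{Theorem-Lefschetz-BiModComplex} to Theorem~\ref{Theorem-Lefschetz-Complex} by carrying out the argument of Theorem~\ref{Theorem-Lefschetz-Bimodule} entry by entry, replacing projective resolutions of single bimodules with homotopically projective resolutions of bounded complexes of bimodules, in complete parallel with the way Theorem~\ref{Theorem-HRR-BimodComplex} was deduced from Theorem~\ref{Theorem-HRR-Bimod}. Thus I first show that Theorem~\ref{Theorem-Lefschetz-BiModComplex} (1) (resp.\ (2)) is equivalent to Theorem~\ref{Theorem-Lefschetz-Complex} (1) (resp.\ (2)); since those statements are already equivalent to each other, this proves the desired equivalence of the four variants.

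For \emph{necessity} the argument is trivial in both cases: specializing $B=k=C$ in Theorem~\ref{Theorem-Lefschetz-BiModComplex} (1) (resp.\ (2)) immediately yields Theorem~\ref{Theorem-Lefschetz-Complex} (1) (resp.\ (2)). For \emph{sufficiency} of (1), I fix $1\le i\le m$ and $1\le j\le p$ and show
$$\sum\limits_{l\in\Z}(-1)^l\ (\tm(\Ext^l_A(\phi,\psi)))_{ij} = ((\tm\phi)^T\cdot C_A^{-T}\cdot \tm\psi)_{ij}.$$
Choose a homotopically projective resolution $\varepsilon: P_M\to M$ in the category of complexes of $B$-$A$-bimodules, and lift $\phi$ to a chain endomorphism $\tilde{\phi}$ of $P_M$ with $\phi\varepsilon=\varepsilon\tilde{\phi}$. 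Since $f_iB$ is a projective right $B$-module, $f_iB\ot_B P_M$ is a homotopically projective resolution of the bounded complex $f_iM$ of finite dimensional right $A$-modules, and $f_iB\ot_B\tilde{\phi}$ lifts $\phi_i$. The functor $g_jC\ot_C-\ot_BBf_i : C\mbox{\rm -Mod-}B\to\Mod k$ is exact, so it yields a chain of isomorphisms in the endomorphism category $\End(\mathcal{C}(\Mod k))$ (or, after passing to cohomology, in $\End(\Mod k)$) between the pair $(g_j\Ext^l_A(M,N)f_i,(\Ext^l_A(\phi,\psi))_{ij})$ and the pair $(\Ext^l_A(f_iM,g_jN),\Ext^l_A(\phi_i,\psi_j))$, exactly as in the proof of Theorem~\ref{Theorem-Lefschetz-Bimodule} (1). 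Applying Lemma~\ref{Lemma-TraceMatrix-qis} (1) to transport the trace along these isomorphisms and then invoking Theorem~\ref{Theorem-Lefschetz-Complex} (1) to compute the alternating sum gives $(\tv\phi_i)^T\cdot C_A^{-T}\cdot \tv\psi_j$, which equals $((\tm\phi)^T\cdot C_A^{-T}\cdot \tm\psi)_{ij}$ by definition of the trace matrices of complexes of bimodules.

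The proof of sufficiency of (2) is strictly analogous: take a homotopically projective resolution $P_M\to M$ of the complex of $B$-$A$-bimodules, observe that $f_jB\ot_B P_M$ is a homotopically projective resolution of $f_jM$ with lifted endomorphism $f_jB\ot_B\tilde{\phi}$, apply the exact functor $f_jB\ot_B-\ot_CCg_i : B\mbox{\rm -Mod-}C\to\Mod k$ to identify $(f_j\Tor^A_l(M,N)g_i,(\Tor^A_l(\phi,\psi))_{ji})$ with $(\Tor^A_l(f_jM,Ng_i),\Tor^A_l(\phi_j,\psi_i))$ in $\End(\Mod k)$, and conclude by Lemma~\ref{Lemma-TraceMatrix-qis} and Theorem~\ref{Theorem-Lefschetz-Complex} (2).

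The only potential subtlety is the legitimacy of the lifting step and of carrying traces through the chain of natural isomorphisms: one must verify that each identification above is a genuine isomorphism in the endomorphism category, i.e.\ is compatible with the endomorphism structure, not merely an isomorphism of underlying $k$-vector spaces. This is the point where Lemma~\ref{Lemma-TraceMatrix-qis} (invariance of trace matrix under isomorphisms and under passage to cohomology) together with Remark~\ref{Remark-TraceMatrix-CohomFinComplex} (invariance under quasi-isomorphisms in the endomorphism category) does all the work, so no new ingredient beyond what was already used in Theorem~\ref{Theorem-Lefschetz-Bimodule} and Theorem~\ref{Theorem-HRR-BimodComplex} is needed.
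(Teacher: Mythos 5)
Your proposal is correct and follows essentially the same route as the paper: the paper's own proof simply says to repeat the argument of Theorem~\ref{Theorem-Lefschetz-Bimodule} verbatim, replacing the projective resolution $P_M$ of the bimodule $M$ by a homotopically projective resolution of the bounded complex $M$ (so that $f_iB\ot_B P_M$ resolves $f_iM$), which is exactly the entrywise reduction you spell out. Your extra care about lifting $\phi$ to $\tilde\phi$ and transporting traces through the isomorphisms in the endomorphism category via Lemma~\ref{Lemma-TraceMatrix-qis} is just a more explicit rendering of the same argument.
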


\begin{proof}
We may employ the same proof as Theorem~\ref{Theorem-Lefschetz-Bimodule} with merely the following modifications:
Let $P_M$ be any homotopically projective resolution of the bounded complex $M$ of finite dimensional $B$-$A$-bimodules.
Since $f_iB$ is a projective right $B$-module, $f_iB\ot_B P_M$ is a homotopically projective resolution of the bounded complex $f_iM$ of  finite dimensional right $A$-modules.
\end{proof}

\subsection{Comparisons with Lefschetz type formulas for dg algebras} \label{Subsection-Lefschetz-dga}

The Lefschetz type formulas for dg algebras were given by Petit in \cite[Proposition 5.5 and Theorem 5.6]{Pet13}.
In this subsection, we will show that the homological Lefschetz type formula on complex level in Theorem~\ref{Theorem-Lefschetz-Complex} (2)
(resp. the Hochschild homological Lefschetz type formula on complex level in Theorem~\ref{Theorem-Lefschetz-Complex} (4))
is just Petit's formula in \cite[Theorem 5.6]{Pet13} (resp. \cite[Proposition 5.5]{Pet13})
restricted to finite dimensional elementary algebras of finite global dimension.
Two main ingredients in Petit's formulas are Hattori-Stallings trace (or Hochschild class) and Shklyarov pairing.
The latter has been introduced already in last section.

\bigskip

\noindent{\bf Hattori-Stallings traces.}
Hattori-Stallings trace is a generalization of the trace of a linear operator.
Let $A$ be an algebra and $P$ a finitely generated projective right $A$-module.
The {\it Hattori-Stallings trace map} of $P$ is the $k$-linear map
$\tr_P : \End_A(P) \cong P \ot_A \Hom_A(P,A) \cong A\ot_{A^e}(\Hom_A(P,A)\ot P) \xrightarrow{\id_A\ot \mathrm{ev}_P} A\ot_{A^e}A \cong HH_0(A)$
where the $A$-bimodule morphism $\mathrm{ev}_P: \Hom_A(P,A)\ot P\to A,\ \xi\ot p \mapsto \xi(p)$, is the evaluation map.
For any $\phi\in \End_A(P)$, the {\it Hattori-Stallings trace} of $\phi$ is $\tr_P(\phi) \in HH_0(A)$.
We refer to \cite{Len69} for Hattori-Stallings trace theory which played crucial roles
in the proofs of the {\it strong no loop conjecture} for finite dimensional elementary algebras \cite{IguLiuPaq11,Han15}.

More general, let $A$ be a dg algebra. Denote by $\per A$ the {\it perfect derived category} of $A$,
i.e., the full triangulated subcategory of the derived category $\mathcal{D}A$ of $A$ consisting of all compact objects,
or equivalently, the smallest thick triangulated subcategory of $\mathcal{D}A$ containing $A$.
Moreover, $\per A$ is triangle equivalent to the homotopy category $\mathcal{H}(\Per A) := H^0(\Per A)$ of the dg category $\Per A$ of perfect dg $A$-modules.
The {\it derived Hattori-Stallings trace morphism} of an object $P\in\per A$ is the morphism
$\Tr_P : \RHom_A(P,P) \cong P \ot^L_A \RHom_A(P,A) \cong A\ot^L_{A^e}(\RHom_A(P,A)\ot P) \xrightarrow{A\ot^L_{A^e}\mathrm{ev}_P} A\ot^L_{A^e}A$ in the derived category $\mathcal{D}k$ of $k$,
where the evaluation morphism $\mathrm{ev}_P: \RHom_A(P,A)\ot P \to A$ is the morphism in $\mathcal{D}A^e$ corresponding to the identity morphism
$\id_{\RHom_A(P,A)}$ under the adjoint isomorphism
$$\Hom_{\mathcal{D}A^e}(\RHom_A(P,A)\ot P,A) \cong \Hom_{\mathcal{D}A^\op}(\RHom_A(P,A),\RHom_A(P,A)),$$
i.e., the counit of the adjoint pair $-\ot P: \mathcal{D}A^\op \rightleftarrows \mathcal{D}A^e :\RHom_A(P,-)$.
Taking 0-th cohomology, we obtain the {\it Hattori-Stallings trace map} or {\it Hochschild class map}
$\tr_P := H^0(\Tr_P) : \End_{\per A}(P) \to HH_0(A)$.
For any $\phi\in \End_{\per A}(P)$, the {\it Hattori-Stallings trace} or {\it Hochschild class} of $\phi$ is $\tr_P(\phi)\in HH_0(A)$.

\bigskip

\noindent{\bf Lefschetz type formulas for dg algebras.}
The Lefschetz type formulas for dg algebras are Petit's formulas below.

\begin{theorem} \label{Theorem-Lefschetz-dga} {\rm (Petit \cite[Theorem 5.6]{Pet13})}
Let $A$ be a proper smooth dg algebra, $M\in \per A$, $\phi\in\End_{\per A}(M)$, $N\in \per A^\op$ and $\psi\in\End_{\per A^\op}(N)$. Then
$$\tr_{M\ot^L_AN}(\phi\ot^L_A\psi) = \langle \tr_M(\phi),\tr_N(\psi) \rangle$$
where $\langle -,- \rangle$ is the Shklyarov pairing of $A$.
\end{theorem}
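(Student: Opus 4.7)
The core strategy is to identify both sides of the identity as two evaluations of one and the same Hochschild-theoretic construction, and then invoke naturality of the derived Hattori--Stallings trace under dg functors.

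The key lemma I would first establish is a \emph{naturality principle for traces}: for any exact dg functor $F:\mathcal{A}\to\mathcal{B}$ between (suitably cocomplete) pre-triangulated dg categories, any $M\in\mathcal{A}$ and any $\phi\in\End_{\mathcal{A}}(M)$, the identity
$$\tr_{F(M)}(F(\phi)) = HH_0(F)\bigl(\tr_M(\phi)\bigr)$$
holds in $HH_0(\mathcal{B})$. This is forced by the intrinsic categorical definition of the derived trace morphism $\Tr_M$: it is built from the evaluation $\ev_M:\RHom_{\mathcal{A}}(M,\mathbf{1})\ot M \to \mathbf{1}$ and a coevaluation (obtained as the image of $\id_M$ under an adjunction isomorphism), both of which are preserved by any exact dg functor.

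Next, I would factor the tensor product functor through the bimodule construction:
$$T : \per A\otimes_k \per A^\op \xrightarrow{\ \kappa\ } \per(A^e) \xrightarrow{\ A\otimes^L_{A^e}-\ } \per k, \qquad (M,N)\longmapsto A\otimes^L_{A^e}(N\otimes_k M) \cong M\otimes^L_A N.$$
Properness of $A$ ensures that $M\otimes^L_A N$ lies in $\per k$, while smoothness of $A$ is precisely what makes $A\otimes^L_{A^e}-$ well-defined and functorial on $\per A^e$ (because $A$ becomes a perfect $A^e$-module). Applying the naturality principle to $T$ and to the endomorphism $\phi\otimes\psi$ of the pair $(M,N)$ yields
$$\tr_{M\otimes^L_A N}(\phi\otimes^L_A\psi) = HH_0(T)\bigl(\tr_{(M,N)}(\phi\otimes\psi)\bigr).$$

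The final step is to recognize the right-hand side as the Shklyarov pairing evaluated at $(\tr_M(\phi),\tr_N(\psi))$. On the product dg category $\per A\otimes\per A^\op$, the trace is multiplicative under the K\"unneth map in the sense that $\tr_{(M,N)}(\phi\otimes\psi)$ corresponds to $\tr_M(\phi)\otimes\tr_N(\psi)$ via the K\"unneth morphism $HH_0(\per A)\otimes HH_0(\per A^\op)\to HH_0(\per A\otimes\per A^\op)$. Combined with Keller's agreement isomorphisms $HH_\bullet(A)\cong HH_\bullet(\per A)$ and $HH_\bullet(A^\op)\cong HH_\bullet(\per A^\op)$, the composition $HH_0(T)\circ(\text{K\"unneth})$ is, by construction (Shklyarov's definition recalled earlier in the paper), exactly the Shklyarov pairing $\langle-,-\rangle$. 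Chasing these identifications delivers the claimed equality. The main obstacle is the naturality principle itself: while intuitively transparent, rendering it precise requires working in the homotopy category of dg categories and checking coherence of the K\"unneth, agreement, and adjunction data. This is precisely where the proper-and-smooth hypotheses are consumed --- properness to keep the trace target in $\per k$, and smoothness to make $A$ perfect as a bimodule so that $A\otimes^L_{A^e}-$ preserves perfectness.
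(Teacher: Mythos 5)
First, a remark on the comparison you asked for: the paper does not prove this statement at all --- it is imported as Petit's \cite[Theorem 5.6]{Pet13} and only \emph{used} afterwards (e.g.\ in the proof of Proposition~\ref{Proposition-HochClass-TrCartan}), so there is no in-paper argument to measure yours against. Your outline does follow the standard strategy behind Shklyarov's and Petit's proofs: factor $(M,N)\mapsto M\ot^L_AN$ through $\per(A\ot A^\op)$ via $(M,N)\mapsto A\ot^L_{A^e}(N\ot M)$, use functoriality of Hochschild homology together with the K\"unneth map, and observe that the Shklyarov pairing is, by its very definition as recalled in the paper, $HH_0$ of this composite. So the architecture is reasonable.

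As a proof, however, the sketch has two concrete soft spots. (i) Your key naturality lemma is justified by saying that the evaluation and coevaluation entering $\Tr_M$ ``are preserved by any exact dg functor''; this is not true in general --- an arbitrary dg functor does not commute with passage to duals $\RHom_{\mathcal{A}}(M,-)$, so the evaluation-based definition of the derived Hattori--Stallings trace is not manifestly natural (nor does an arbitrary exact functor even preserve perfect objects, which is needed for the left-hand side of your identity to be defined). The honest route is to first prove that the evaluation-based class $\tr_M(\phi)$ coincides with the class of $\phi$ regarded as a Hochschild $0$-cycle of the full dg subcategory on a cofibrant model of $M$, transported through the agreement isomorphism $HH_\bullet(\Per A)\cong HH_\bullet(A)$; with that description, naturality under dg functors and the K\"unneth-multiplicativity $\tr_{(M,N)}(\phi\ot\psi)\leftrightarrow\tr_M(\phi)\ot\tr_N(\psi)$ that you also assert become formal. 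That identification is precisely the substantive content of the theorem, and it is the step your proposal leaves unproved. (ii) The roles you assign to the hypotheses are off: smoothness is not what makes $A\ot^L_{A^e}-$ send $\per A^e$ to $\per k$. That functor takes the compact generator $A^e$ to $A$, so what is needed is $A\in\per k$, i.e.\ \emph{properness} --- exactly the hypothesis under which the paper introduces Shklyarov's functor $-\ot_{A\ot A^\op}A:\Per(A\ot A^\op)\to\Per k$. Smoothness enters elsewhere in Petit's framework (e.g.\ nondegeneracy of the pairing), not at this step. Neither issue is fatal to the plan, but as written the central step is asserted rather than proved, and its stated justification would fail.
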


\begin{proposition} \label{Proposition-Lefschetz-HochHom-dga} {\rm (Petit \cite[Proposition 5.5]{Pet13})}
Let $A$ be a proper smooth dg algebra, $M\in\per A^e$ and $\phi\in\End_{\per A^e}(M)$. Then
$$\tr_{A\ot^L_{A^e}M}(A\ot^L_{A^e}\phi)=
\langle \tr_{A_{A^e}}(\id_{A_{A^e}}),\tr_{_{A^e}M}(\phi) \rangle$$
where $\langle -,- \rangle$ is the Shklyarov pairing of $A^e$.
\end{proposition}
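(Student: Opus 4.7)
The plan is to deduce this proposition from the (more general) Theorem~\ref{Theorem-Lefschetz-dga}, applied with the enveloping dg algebra $A^e$ in place of $A$. First I would check that $A^e$ is itself proper and smooth whenever $A$ is: properness of $A^e$ is immediate from the K\"unneth formula (the total cohomology of $A^e$ is the tensor product of those of $A$ and $A^\op$), while smoothness of $A^e$ follows from that of $A$, since the dg $(A^e)^e$-module $A^e$ can be built out of $A$, which is perfect over $A^e$ by hypothesis. The same smoothness of $A$ is what guarantees $A \in \per A^e$, so that $\tr_{A_{A^e}}(\id_{A_{A^e}}) \in HH_0(A^e)$ is defined in the first place.

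Next, using the canonical swap isomorphism $\sigma : (A^e)^\op = (A^\op \ot A)^\op \xrightarrow{\sim} A \ot A^\op = A^e$, I would transport the left $A^e$-module $_{A^e}M$ to a right $(A^e)^\op$-module, and regard $\phi$ correspondingly as an element of $\End_{\per (A^e)^\op}(M)$. The Shklyarov pairing $\langle -,-\rangle : HH_0(A^e) \ot HH_0((A^e)^\op) \to k$ of $A^e$ is then defined, and $\tr_{_{A^e}M}(\phi) \in HH_0((A^e)^\op)$ makes sense.

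Applying Theorem~\ref{Theorem-Lefschetz-dga} to the proper smooth dg algebra $A^e$, with the perfect right $A^e$-module $A$ carrying $\id_A$ and the perfect right $(A^e)^\op$-module $M$ carrying $\phi$, would yield
$$\tr_{A \ot^L_{A^e} M}(A \ot^L_{A^e} \phi) = \langle \tr_{A_{A^e}}(\id_{A_{A^e}}), \tr_{_{A^e}M}(\phi) \rangle,$$
which is precisely the formula in the statement.

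The main obstacle is the naturality bookkeeping around the swap $\sigma$: one must verify that $\sigma$ identifies the Hattori-Stallings trace of $M$ as a right $(A^e)^\op$-module with $\tr_{_{A^e}M}(\phi)$ as defined via the left $A^e$-module structure, and that the Shklyarov pairing on $A^e$ agrees under this identification. Equivalently, $\sigma$ must be compatible with the canonical isomorphism $(-)^\vee : HH_\bullet(A^e) \to HH_\bullet((A^e)^\op)$ implicit in the definition of the pairing, and with the formation of the derived tensor product $A \ot^L_{A^e} M$ viewed either as $A \ot^L_{A^e} {_{A^e}M}$ or as $M \ot^L_{(A^e)^\op} A$. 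Once this routine but delicate naturality is confirmed, the proposition reduces to a direct specialization of Theorem~\ref{Theorem-Lefschetz-dga}.
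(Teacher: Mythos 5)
The paper offers no proof of this proposition at all: it is quoted verbatim from Petit \cite[Proposition 5.5]{Pet13}, so there is no internal argument to compare yours against. Granting Theorem~\ref{Theorem-Lefschetz-dga} as a black box, your deduction is sound: the proposition is literally Theorem~\ref{Theorem-Lefschetz-dga} applied to the dg algebra $A^e$, with the pair $(A_{A^e},\id_{A_{A^e}})$ in the first slot and $({_{A^e}M},\phi)$ in the second, and you correctly isolate the points that make this specialization legitimate --- $A^e$ is proper (K\"unneth) and smooth (smoothness passes to opposites and to tensor products of smooth dg algebras), $A\in\per A^e$ is exactly the smoothness hypothesis on $A$, and $_{A^e}M$ is perfect over $(A^e)^\op$ because the flip isomorphism $(A^e)^\op\cong A^e$ transports the given right $A^e$-structure. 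In fact the bookkeeping you worry about is even lighter than you suggest: with the paper's conventions $\tr_{_{A^e}M}(\phi)$ already lives in $HH_0((A^e)^\op)$, which is precisely the second argument of the Shklyarov pairing of $A^e$, so beyond transporting perfectness along the flip no further compatibility of $(-)^\vee$ or of the pairing needs to be checked. Your move is also consonant with how the paper itself exploits these quoted results: the proof of Proposition~\ref{Proposition-HochClass-TrCartan} and the comparisons in \S\ref{Subsection-Lefschetz-dga} proceed by just such specializations of Theorem~\ref{Theorem-Lefschetz-dga}. One caveat: this is a proof only relative to Theorem~\ref{Theorem-Lefschetz-dga}; in Petit's own paper Proposition 5.5 is established as part of the same development as Theorem 5.6 (and precedes it), so your argument should be presented as the observation that the proposition is a special case of Theorem~\ref{Theorem-Lefschetz-dga}, not as a reconstruction of Petit's proof, lest it appear circular.
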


\bigskip

\noindent{\bf Hattori-Stallings traces for finite dimensional algebras.}
For any finite dimensional elementary algebra $A$ of finite global dimension,
its perfect derived category $\per A$ is triangle equivalent to
the homotopy category $\mathcal{H}^b(\proj A)$ of the category of bounded complexes of finite dimensional projective right $A$-modules,
or the homotopy category $\mathcal{H}^b(\inj A)$ of the category of bounded complexes of finite dimensional injective right $A$-modules,
or the bounded derived category $\mathcal{D}^b(\mod A)$ of the category $\mod A$ of finite dimensional right $A$-modules,
or the full triangulated subcategory $\mathcal{D}^f(A)$ of the derived category $\mathcal{D}A$ of $A$ consisting of cohomologically finite dimensional complexes of right $A$-modules.

The following result relates Hattori-Stallings trace with trace vector,
and provides a concrete formula to calculate Hattori-Stallings trace.

\begin{proposition} \label{Proposition-HochClass-TrCartan}
Let $A$ be a finite dimensional elementary algebra of finite global dimension,
$\{e_1,\cdots,e_n\}$ a complete set of orthogonal primitive idempotents in $A$,
and $\phi$ a chain endomorphism of a bounded complex $M$ of finite dimensional right $A$-modules.
Then the Hattori-Stallings trace of $\phi$
$$\tr_M(\phi) =(e_1,\cdots,e_n)\cdot C_A^{-1}\cdot \tv_M(\phi)$$
in $HH_0(A)=A/[A,A]=\bigoplus\limits_{i=1}^nke_i$.
\end{proposition}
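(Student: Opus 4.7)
The plan is to mirror the proof of Proposition~\ref{Proposition-ChernChar-DimVect}, replacing Shklyarov's HRR formula (Theorem~\ref{Theorem-HRR-dga}) with Petit's Lefschetz formula (Theorem~\ref{Theorem-Lefschetz-dga}). By Proposition~\ref{Proposition-HH-GloDimFinAlg}, the classes $e_1,\ldots,e_n$ form a $k$-basis of $HH_0(A)=A/[A,A]$, so I would write $\tr_M(\phi)=\sum_{j=1}^n a_j e_j$ for unique scalars $a_1,\ldots,a_n\in k$; the task reduces to showing $[a_1,\ldots,a_n]^T=C_A^{-1}\cdot\tv_M(\phi)$.

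Next, by Remark~\ref{Remark-Lefschetz-Complex-CohFin} I may replace $M$ (without altering either $\tr_M(\phi)$ or $\tv_M(\phi)$) by a quasi-isomorphic bounded complex of finitely generated projective right $A$-modules; in particular $M\in\per A$, so Petit's theorem applies. I would take $N:=Ae_i\in\per A^{\op}$ and $\psi:=\id_{Ae_i}$. Since $Ae_i$ is projective as a left $A$-module, there is a canonical identification $M\ot^L_A Ae_i\simeq Me_i$ under which $\phi\ot^L_A\id_{Ae_i}$ corresponds to the restriction $\phi_i$ of $\phi$ to the subcomplex $Me_i$. Petit's formula then specializes to
$$\tr_{Me_i}(\phi_i)\;=\;\langle\tr_M(\phi),\,\tr_{Ae_i}(\id_{Ae_i})\rangle.$$

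For the left-hand side, the super trace of $\phi_i$ on the bounded complex $Me_i$ of finite dimensional $k$-vector spaces is, by the very definition of the super trace vector, the $i$-th coordinate $(\tv_M(\phi))_i$. For the right-hand side, a direct check from the definition of the derived Hattori-Stallings trace gives $\tr_{Ae_i}(\id_{Ae_i})=e_i^{\vee}$ in $HH_0(A^{\op})=A^{\op}/[A^{\op},A^{\op}]$: the isomorphism $\End_{A^{\op}}(Ae_i)\cong e_iAe_i$ sends $\id_{Ae_i}$ to $e_i$, and the evaluation composition producing $A\ot^L_{A^e}A\cong HH_0(A^{\op})$ sends this $e_i$ to the class of $e_i$. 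Combining this with Proposition~\ref{Proposition-Pairing-CartanMatrix}, which identifies the matrix of the Shklyarov pairing under the bases $\{e_j\}$ and $\{e_i^{\vee}\}$ as $C_A^T$, yields
$$(\tv_M(\phi))_i\;=\;\Big\langle\sum_{j=1}^n a_j e_j,\;e_i^{\vee}\Big\rangle\;=\;\sum_{j=1}^n(C_A)_{ij}\,a_j,$$
i.e.\ $\tv_M(\phi)=C_A\cdot[a_1,\ldots,a_n]^T$, whence $[a_1,\ldots,a_n]^T=C_A^{-1}\cdot\tv_M(\phi)$ and substituting back gives the claimed formula.

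The main obstacle I expect is the verification $\tr_{Ae_i}(\id_{Ae_i})=e_i^{\vee}$: it requires a careful chase through the chain of isomorphisms defining the derived Hattori-Stallings trace map in the perfect derived category, and in particular a check that the evaluation morphism $\mathrm{ev}_{Ae_i}$ behaves on $\id$ exactly as the classical Hattori-Stallings trace does on a finitely generated projective module. A secondary, minor point is to confirm that the canonical quasi-isomorphism $M\ot^L_A Ae_i\simeq Me_i$ transports $\phi\ot^L_A\id_{Ae_i}$ to $\phi_i$ as a chain endomorphism (not merely up to homotopy), so that the super trace on the left of Petit's identity really equals the $i$-th entry of $\tv_M(\phi)$; both of these follow from unravelling the definitions but deserve explicit verification.
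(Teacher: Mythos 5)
Your proposal is correct and follows essentially the same route as the paper's proof: apply Petit's formula (Theorem~\ref{Theorem-Lefschetz-dga}) with $N=Ae_i$ and $\psi=\id_{Ae_i}$, use $\tr_{Ae_i}(\id_{Ae_i})=e_i^{\vee}$ together with the pairing matrix $C_A^T$ from Proposition~\ref{Proposition-Pairing-CartanMatrix}, identify the left-hand side with $(\tv_M(\phi))_i$, and solve the resulting linear system $\tv_M(\phi)=C_A\cdot X$. The two verification points you flag (the value of $\tr_{Ae_i}(\id_{Ae_i})$ and the transport of $\phi\ot^L_A\id_{Ae_i}$ to $\phi_i$ under $M\ot^L_AAe_i\simeq Me_i$) are exactly the steps the paper asserts without further comment, and they do unwind as you describe.
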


\begin{proof}
It follows from Proposition~\ref{Proposition-Pairing-CartanMatrix} that the matrix of the Shklyarov pairing $\langle -,- \rangle$ of $A$ under the $k$-basis $e_1,\cdots,e_n$ of $HH_0(A)$ and the $k$-basis $e_1^\vee,\cdots,e_n^\vee$ of $HH_0(A^\op)$ is $C_A^T$.
Alternatively, by Proposition~\ref{Proposition-HH-GloDimFinAlg},
we know that $e_1,\cdots,e_n$ and $e_1^\vee,\cdots,e_n^\vee$ are $k$-bases of $HH_0(A)$ and $HH_0(A^\op)$ respectively.
Note that $\tr_{e_i A}(\id_{e_i A})=e_i$ and $\tr_{A e_j}(\id_{A e_j})=e_j^\vee$ for all $1\le i,j\le n$.
Taking $M, \phi, N$ and $\psi$ in Theorem~\ref{Theorem-Lefschetz-dga} to be $e_i A, \id_{e_i A}, Ae_j$ and $\id_{Ae_j}$ respectively, we obtain
$$\langle e_i, e_j^\vee\rangle
= \langle \tr_{e_iA}(\id_{e_iA}),\tr_{Ae_j}(\id_{Ae_j}) \rangle
= \tr_{e_iAe_j}(\id_{e_iAe_j})
= \dim e_iAe_j = (C_A)_{ji}$$
in $k$ for all $1\le i,j\le n$.
Thus the matrix of $\langle -,- \rangle$ under the bases is $C_A^T$.

Taking $N$ and $\psi$ in Theorem~\ref{Theorem-Lefschetz-dga} to be $Ae_i$ and $\id_{Ae_i}$ respectively,
we obtain $\tr_{Me_i}(\phi_i) = \tr_{M\ot^L_AAe_i}(\phi\ot^L_AAe_i) = \langle \tr_M(\phi),e_i^\vee \rangle = (X^T \cdot C_A^T)_i = (C_A\cdot X)_i$ for all $1\le i\le n$
where $X$ is the coordinate of $\tr_M(\phi)$ under the $k$-basis $e_1,\cdots,e_n$ of $HH_0(A)$.
So $\tv_M(\phi)=C_A\cdot X$, i.e., $X= C_A^{-1}\cdot \tv_M(\phi)$.
Thus $\tr_M(\phi) =(e_1,\cdots,e_n)\cdot C_A^{-1}\cdot \tv_M(\phi)$.
\end{proof}

\bigskip

\noindent{\bf Comparisons of the Lefschetz type formulas.}
Now we compare the Lefschetz type formulas in Theorem~\ref{Theorem-Lefschetz-Complex} (2) and (4)
with Petit's formulas in Theorem~\ref{Theorem-Lefschetz-dga} and Proposition~\ref{Proposition-Lefschetz-HochHom-dga}.

\medskip

{\it The homological Lefschetz type formula on complex level in Theorem~\ref{Theorem-Lefschetz-Complex} (2) is just
Petit's formula in Theorem~\ref{Theorem-Lefschetz-dga}
restricted to finite dimensional elementary algebras of finite global dimension:}
Let $A$ be a finite dimensional elementary algebra of finite global dimension,
and $\{e_1,\cdots,e_n\}$ a complete set of orthogonal primitive idempotents in $A$.
Without loss of generality (See Remark~\ref{Remark-Lefschetz-Complex-CohFin}),
let $\phi$ be a chain endomorphism of a bounded complex $M$ of finite dimensional projective right $A$-modules,
and $\psi$ a chain endomorphism of a bounded complex $N$ of finite dimensional projective left $A$-modules.
Petit's formula in Theorem~\ref{Theorem-Lefschetz-dga} is
$\tr_{M\ot^L_AN}(\phi\ot^L_A\psi) = \langle \tr_M(\phi),\tr_N(\psi) \rangle$.
By Proposition~\ref{Proposition-Pairing-CartanMatrix}, Proposition~\ref{Proposition-HochClass-TrCartan} and Lemma~\ref{Lemma-CartanMatrix-AlgebraTensorOp} (2),
its right hand side
$$\begin{array} {ll}
\langle \tr_M(\phi),\tr_N(\psi) \rangle
& \stackrel{\rm 2P}{=}  (C_A^{-1} \cdot \tv_M(\phi))^T \cdot C_A^T \cdot (C_{A^\op}^{-1} \cdot \tv_{N_{A^\op}}(\psi)) \\ [3mm]
& \stackrel{\rm L}{=}  (C_A^{-1} \cdot \tv_M(\phi))^T \cdot C_A^T \cdot (C_A^{-T} \cdot (\tv_N(\psi))^T) \\ [3mm]
& =  (\tv_M(\phi))^T \cdot C_A^{-T} \cdot (\tv_N(\psi))^T \\ [3mm]
& = \tv_N(\psi) \cdot C_A^{-1} \cdot \tv_M(\phi) \\ [3mm]
& \stackrel{\rm L}{=} \langle (\tv_N(\psi))^T, \tv_M(\phi)\rangle_{A^\op}.
\end{array}$$

\medskip

{\it The Hochschild homological Lefschetz type formula on complex level in Theorem~\ref{Theorem-Lefschetz-Complex} (4)
is just Petit's formula in Proposition~\ref{Proposition-Lefschetz-HochHom-dga}
restricted to finite dimensional elementary algebras of finite global dimension:}
Let $A$ be a finite dimensional elementary algebra of finite global dimension,
and $\{e_1,\cdots,e_n\}$ a complete set of orthogonal primitive idempotents in $A$.
Without loss of generality (See Remark~\ref{Remark-Lefschetz-Complex-CohFin}),
let $\phi$ be a chain endomorphism of a bounded complex $M$ of finite dimensional projective $A$-bimodules.
Petit's formula in Proposition~\ref{Proposition-Lefschetz-HochHom-dga} is
$\tr_{A\ot^L_{A^e}M}(A\ot^L_{A^e}\phi)=\langle \tr_{A_{A^e}}(\id_{A_{A^e}}),\tr_{_{A^e}M}(\phi) \rangle$.
Note that the canonical complete set of orthogonal primitive idempotents of $A^e$ is
$e_1\ot e_1,\cdots,e_1\ot e_n,\cdots,e_n\ot e_1,\cdots,e_n\ot e_n$.
So we have
$$\begin{array}{ll}
& \tv_{_{A^e}M}(\phi) \\[2mm]
= & (\tr_{(e_1\ot e_1)M}(\phi_{11}),\cdots,
\tr_{(e_1\ot e_n)M}(\phi_{1n}),\cdots,
\tr_{(e_n\ot e_1)M}(\phi_{n1}),\cdots,\tr_{(e_n\ot e_n)M}(\phi_{nn})) \\[2mm]
= & (\tr_{e_1M e_1}(\phi_{11}), \cdots,
\tr_{e_nM e_1}(\phi_{1n}),\cdots,
\tr_{e_1M e_n}(\phi_{n1}),\cdots,\tr_{e_nM e_n}(\phi_{nn})) \\ [2mm]
= & (\tv_{Me_1}(\phi_1),\cdots,\tv_{Me_n}(\phi_n)).
\end{array}$$
By Proposition~\ref{Proposition-Pairing-CartanMatrix}, Proposition~\ref{Proposition-HochClass-TrCartan}, Lemma~\ref{Lemma-CartanMatrix-AlgebraTensorOp} (3), Lemma~\ref{Lemma-Property-Trace} (3), and Theorem~\ref{Theorem-Lefschetz-dga}, the right hand side of Petit's formula in Proposition~\ref{Proposition-Lefschetz-HochHom-dga}
$$\begin{array}{ll}
  & \langle\tr_{A_{A^e}}(\id_{A_{A^e}}),\tr_{_{A^e}M}(\phi) \rangle \\ [4mm]
\stackrel{\rm 2P}{=} & (C_{A^e}^{-1} \cdot \tv_{A_{A^e}}(\id_{A_{A^e}}))^T \cdot C_{A^e}^T \cdot (C_{A^e}^{-T} \cdot (\tv_{_{A^e}M}(\phi))^T) \\ [4mm]
= & (\tv_{A_{A^e}}(\id_{A_{A^e}}))^T \cdot C_{A^e}^{-T} \cdot (\tv_{_{A^e}M}(\phi))^T \\ [4mm]
= & \tv_{_{A^e}M}(\phi) \cdot C_{A^\op\ot A}^{-1} \cdot \tv_{A_{A^e}}(\id_{A_{A^e}}) \\ [4mm]
\stackrel{\rm 2L}{=} & \tv_{_{A^e}M}(\phi) \cdot (C_A^{-T}\ot C_A^{-1}) \cdot \dv A_{A^e}\\ [4mm]
= & (\tv_{Me_1}(\phi_1),\cdots,\tv_{Me_n}(\phi_n)) \cdot
\begin{pmatrix} (C_A^{-T})_{11}\cdot C_A^{-1} & \cdots & (C_A^{-T})_{1n}\cdot C_A^{-1}\\
\vdots & \ddots & \vdots\\ (C_A^{-T})_{n1}\cdot C_A^{-1} & \cdots & (C_A^{-T})_{nn}\cdot C_A^{-1} \end{pmatrix} \cdot
\begin{pmatrix} \dv e_1 A\\ \vdots \\ \dv e_n A \end{pmatrix}\\ [10mm]
= &\sum\limits_{1\le i,j\le n} \tv_{M e_i}(\phi_i) \cdot (C_A^{-T})_{ij}\cdot C_A^{-1} \cdot \dv e_j A \\ [4mm]
\stackrel{\rm T}{=} & \sum\limits_{1\le i,j\le n} (C_A^{-T})_{ij} \cdot \tr_{e_jA \ot^L_A Me_i}(e_jA \ot^L_A\phi_i) \\ [5mm]
= & \sum\limits_{1\le i,j\le n} (C_A^{-T})_{ij} \cdot \tr_{e_jMe_i}(\phi_{ij}) \\ [5mm]
= & \tr(C_A^{-1}\cdot \tm\phi).
\end{array}$$

\medskip

Last, using the same strategy as above, it is not difficult to show that,
{\it when both formulas are restricted to three finite dimensional elementary algebras of finite global dimension,
the homological Lefschetz type formula on bimodule complex level in Theorem~\ref{Theorem-Lefschetz-BiModComplex} (2)
and Petit's formula in \cite[Theorem 5.8]{Pet13} coincide.}

\bigskip

\noindent {\footnotesize {\bf ACKNOWLEDGEMENT.} The authors are sponsored by Project 11971460 NSFC.}

\footnotesize

\end{document}